\documentclass[12pt]{amsart}

\pdfoutput=1

\usepackage[pdftex]{graphicx,color}

\usepackage{amsmath}
\usepackage{amsthm}
\usepackage{amssymb}
\usepackage{amscd}
\usepackage{amsxtra}

\newcommand{\forces}{\Vdash}

\DeclareMathOperator{\dom}{dom}
\DeclareMathOperator{\ot}{ot}
\DeclareMathOperator{\coll}{Coll}
\DeclareMathOperator{\Fil}{Fil}
\DeclareMathOperator{\ssup}{ssup}
\DeclareMathOperator{\rge}{rge}
\DeclareMathOperator{\cof}{cof}
\DeclareMathOperator{\cf}{cf}
\DeclareMathOperator{\Ult}{Ult}

\theoremstyle{plain}
\newtheorem{theorem}{Theorem}[section]
\newtheorem{lemma}[theorem]{Lemma}
\newtheorem{corollary}[theorem]{Corollary}

\theoremstyle{definition}
\newtheorem{definition}[theorem]{Definition}

\theoremstyle{remark}
\newtheorem*{remark}{Remark}
\newtheorem*{claim}{Claim}

\title{Small universal families of graphs on $\aleph_{\omega+1}$}

\author{James Cummings}

\address{Department of Mathematical Sciences, Carnegie Mellon University, Pittsburgh PA 15213-3890, USA} 

\email{jcumming@andrew.cmu.edu} 

\author{Mirna D{\v z}amonja}

\address{School of Mathematics, University of East Anglia, Norwich, NR4 7TJ,
UK}

\email{M.Dzamonja@uea.ac.uk}

\author{Charles Morgan}

\address{Department of Mathematics, University College London, Gower Street, London, WC1E 6BT, UK}

\email{charles.morgan@ucl.ac.uk}

\thanks{James Cummings  was partially supported by NSF grant DMS-1101156. Mirna D{\v z}amonja thanks EPSRC for their support through their grant
EP/I00498 and Leverhulme Trust for a Research Fellowship for the period May 2014 to May 2015.
Charles Morgan thanks EPSRC for their support through grant EP/I00498.
Cummings, D{\v z}amonja and Morgan thank the Institut Henri Poincar{\' e} for their
support through the ``Research in Paris'' program during the period 24-29 June
2013.
The authors thank Jacob Davis for his useful comments on  draft versions of this paper.}

\begin{document}

\maketitle

\begin{abstract} We prove that it is consistent that $\aleph_\omega$ is strong limit, $2^{\aleph_\omega}$ is large and the
   universality number for graphs on $\aleph_{\omega+1}$ is small. The proof uses Prikry forcing with interleaved
   collapsing.
\end{abstract}

\section{Introduction} 

   If $\mu$ is an infinite cardinal, a {\em universal graph on $\mu$} is a graph with vertex set $\mu$ which contains
   an isomorphic induced copy of every such graph. More generally, a family $F$ of graphs on $\mu$ is
   {\em jointly universal} if every graph on $\mu$ is isomorphic to an induced subgraph of some graph in $F$.
   We denote by $u_\mu$ the least size of a jointly universal family of graphs on $\mu$, and record the 
   easy remarks that $u_\mu \le 2^\mu$ and that if $u_\mu \le \mu$ then $u_\mu= 1$.
  If $\mu = \mu^{<\mu}$, then by standard results in model theory there exists a saturated
  (and hence universal) graph on $\mu$. It follows that under GCH and the hypothesis that $\mu$ is regular,
  $u_\mu = 1$. A standard idea in model theory (the construction of {\em special models}) shows that under GCH we have $u_\mu = 1$
  for singular $\mu$ as well: we fix  $\langle \mu_i : i < \cf(\mu) \rangle$ a sequence of regular cardinals which is cofinal in $\mu$,
  build a graph $\mathcal G$ which is the union of an increasing sequence of induced subgraphs
  ${\mathcal G}_i$ where ${\mathcal G}_i$ is a saturated graph on $\mu_i$, and argue by repeated applications of
  saturation that $G$ is universal.


  Questions about the value of $u_\mu$ when $\mu < \mu^{<\mu}$ have been investigated
  by several authors. We refer the reader to papers by 
  D{\v z}amonja and Shelah \cite{DzSh, DzSh2}, Kojman and Shelah \cite{KoSh},  Mekler \cite{Mekler} and Shelah \cite{Shgraphs}.

 We will consider the case when
  $\mu$ is a successor cardinal $\kappa^+$ and $2^\kappa > \kappa^+$. When $\kappa$ is regular it is known that:
\begin{enumerate}

\item  It is possible to produce models where $u_{\kappa^+}$ is arbitrarily large \cite{KoSh},
   for example by adding many Cohen subsets of $\kappa$ over a model of GCH.  
\item It is possible to produce models where $\kappa^{<\kappa} = \kappa$, $2^\kappa$ is arbitrarily
   large and $u_{\kappa^+} = \kappa^{++}$ \cite{DzSh} by iterated forcing over a model of GCH. 
\end{enumerate}
   The question whether we can have $u_{\kappa^+} = 1$ when $2^\kappa > \kappa^+$ remains mysterious
   for general values of $\kappa$, though it is known \cite{Mekler,Shgraphs} to have a positive solution for $\kappa = \omega$.  

  When $\kappa$ is singular then questions about $u_{\kappa^+}$ become harder, since we have fewer
  forcing constructions available. D{\v z}amonja and Shelah \cite{DzSh} found a line of attack on this kind of question,
  where the key idea is that  we will prepare a large cardinal $\kappa$ by means of iterated forcing
  which preserves its large cardinal character,
  and only at the end of the construction will we force to make $\kappa$ become a singular
  cardinal.  By this method D{\v z}amonja and Shelah produced models where $\kappa$ is
  singular strong limit of cofinality $\omega$, $2^\kappa$ is arbitrarily large and 
  $u_{\kappa^+} \le \kappa^{++}$.

 In \cite{DzSh} the final step in the construction is Prikry forcing, so that in the final model $\kappa$ is still
   rather large by some measures, for example it is still a cardinal fixed point.
 In this paper  we will use a forcing poset defined by Foreman and Woodin \cite{FoWo} which will make
  $\kappa$ become $\aleph_\omega$.   In some
  joint work with Magidor and Shelah \cite{5authors}, we obtain similar results where the final step is a form of Radin forcing
  which changes the cofinality of $\kappa$ to uncountable values such as $\omega_1$. 

  Our main result is this: it is consistent relative to
  a supercompact cardinal that
  $\aleph_\omega$ is strong limit, $2^{\aleph_\omega} = \aleph_{\omega+3}$,
  and $u_{\aleph_{\omega+1}} \le \aleph_{\omega+2}$. In the rest of this Introduction we give an overview
  of the proof, and conclude with a guide to the structure of the paper.

   The Foreman-Woodin poset is a variation of Prikry forcing, which 
   adds a Prikry sequence $\kappa_i$ of inaccessible cardinals cofinal in $\kappa$,
   and in addition collapses all but finitely many cardinals between successive points 
   on the Prikry sequence so that $\kappa$ becomes $\aleph_\omega$. The only parameter needed to define Prikry forcing is a normal
   measure $U_0$, but the Foreman-Woodin forcing has an additional parameter $\mathcal F$
   which is a filter on the set of functions representing elements of a certain 
   complete Boolean algebra in $\Ult(V, U_0)$.

   We will start with a ground model $V$ in which $\kappa$ is a supercompact cardinal, which  has been
   prepared so as to be indestructible under $\kappa$-directed closed forcing, and
   $2^\kappa = \kappa^{+3}$.  We will define an iterated
   forcing poset ${\mathbb Q}^*$ by  iterating
   for $\kappa^{+4}$ many steps with supports of size less than $\kappa$, forcing at each stage
   $i$ with a poset ${\mathbb Q}_i$ which is $\kappa$-directed closed and has a strong form
   of $\kappa^+$-cc. The cardinal $\kappa$ will still be supercompact in $V^{{\mathbb Q}^*}$, and
   this will enable us to choose a normal measure $U_0$ and filter $\mathcal F$, which can be
   used as parameters to define a Foreman-Woodin forcing $\mathbb P$.

   The key idea is that the poset ${\mathbb Q}_i$ will anticipate the results of
   forcing over $V^{{\mathbb Q}^*}$ with $\mathbb P$. To be more specific, at each stage $i$
   of the construction a suitable form of diamond sequence will be used to produce
   ``guesses'' $W_i$ and ${\mathcal F}_i$ at the final values of $U_0$ and $\mathcal F$,
   and there will be many stages $i$ at which these guesses are correct (in the sense
   that $W_i$ and ${\mathcal F}_i$ are the restrictions to $V^{{\mathbb P}_i}$ 
   of  $U_0$ and ${\mathcal F}$).

   At stage $i$  there is a poset ${\mathbb P}_i$ which is computed from 
   $W_i$ and ${\mathcal F}_i$ in the same way that $\mathbb P$ is computed 
   from $U_0$ and $\mathcal F$. If the guesses made at stage $i$ are correct then the final
   ${\mathbb P}$-generic object will induce a ${\mathbb P}_i$-generic object. 
   The  poset ${\mathbb Q}_i$ aims to add a $\mathbb P$-name for a graph on $\kappa^+$,
   whose interpretation  absorbs all graphs in the extension of stage $i$ by the induced ${\mathbb P}_i$-generic object.

   Our final model will be obtained by halting the construction
   at a suitable stage $i^*$ of cofinality $\kappa^{++}$, and forcing with ${\mathbb P}_{i^*}$.
   The point here (an idea which comes from \cite{DzSh}) is that we can read off
   a universal family of size $\kappa^{++}$ from a cofinal set of stages below
   $i^*$, and we are in a situation where $2^\kappa = \kappa^{+3}$.

   We conclude this section with an overview of the paper and a couple of remarks:
\begin{itemize}
\item In Section \ref{constraintsection} we discuss the filter $\mathcal F$ which is used in defining
  $\mathbb P$ and give an account of its main properties.
\item In Section \ref{pforcing} we construct the forcing ${\mathbb P}$ and prove various key facts about
  it using the properties of $\mathcal F$. 
\item In Section \ref{qsection} we construct the ``anticipation forcing'' $\mathbb Q$ and prove 
  that it has certain properties. Most notably  $\mathbb Q$ is $\kappa$-compact and has a strong form of the
  $\kappa^+$-chain condition. 
\item In Section \ref{mainsection} we describe the main iteration ${\mathbb Q}^*$ and prove a key technical fact
   by a  master condition argument.
\item In Section \ref{graphsection} we prove the main theorem.
\item In Section \ref{after} we discuss generalisations, related work and some natural open problems.
\end{itemize} 

\begin{remark}  Foreman and Woodin's paper \cite{FoWo} actually defines a supercompact Radin forcing
  with interleaved Cohen forcing, and its projection to a Radin forcing with interleaved Cohen forcing controlled
  by certain filters. Our forcing $\mathbb P$ here is a version of the projected forcing, with the Cohen forcing replaced by
  collapsing forcing and the Radin forcing simplified to the special case of Prikry forcing. $\mathbb P$ is also a close relative
  of the forcing poset used by Woodin to obtain the failure of SCH at $\aleph_\omega$ from  optimal hypotheses,
  the difference being that in Woodin's forcing poset the constraining filters are generic over the relevant
  ultrapowers. Our approach was dictated by the necessity to have the ``approximations'' ${\mathbb P}_i$
  be well-behaved forcing posets, in a context where they can neither be obtained as projections of 
  supercompact Prikry forcing with interleaved collapsing nor constructed from filters which are 
  generic over ultrapowers. Of course, all this work traces back ultimately to Magidor's original model
  for the failure of SCH at $\aleph_\omega$ \cite{SCH1}.  
\end{remark}

\section{Constraints and filters} \label{constraintsection}

   We start by assuming that $2^\kappa = \kappa^{+n}$ for some $n < \omega$ and that $\kappa$ is $2^\kappa$-supercompact.
   We will fix $U$ an ultrafilter on $P_\kappa \kappa^{+n}$ witnessing the $2^\kappa$-supercompactness of $\kappa$,
   and let $j: V \rightarrow M = \Ult(V, U)$ be the associated ultrapower map. We let $U_0$ be the projection
   of $U$ to an ultrafilter on $\kappa$ via the map $x \mapsto x \cap \kappa$.  We remind the reader of
   some standard facts. 
\begin{enumerate}
\item  $U = \{ A \subseteq P_\kappa \kappa^{+n} : j`` \kappa^{+n} \in j(A) \}$, and
       $[F]_U = j(F)(j`` \kappa^{+n})$ for every function $F$ with $\dom(F) \in U$.  
\item  $U$ concentrates on the set of $x \in P_\kappa \kappa^{+n}$ such that $x \cap \kappa$ is an
      inaccessible cardinal less than $\kappa$ and $\ot(x) = (x \cap \kappa)^{+n}$.  We will denote this
      set by $A_{\rm good}$, and for   $x \in A_{\rm good}$ we let $\kappa_x = x \cap \kappa$ and $\lambda_x = \ot(x)$. 
\item  $U_0$ is a normal measure on $\kappa$, and
       $U_0 = \{ B \subseteq \kappa : \kappa \in j(B) \}$.  We let $j_0: V \rightarrow M_0 = \Ult(V, U_0)$ be the associated ultrapower map,
       and note that  $[f]_{U_0} = j_0(f)(\kappa)$ for every function $f$ with $\dom(f) \in U_0$.
\item  There is an elementary embedding $k: M_0 \rightarrow M$ such that $k \circ j_0 = j$, which is
        given by the formula $k:[f]_{U_0}  \mapsto j(f)(\kappa)$. 
\end{enumerate}

  We now fix an integer $m$ with $n < m < \omega$, and define a family of forcing posets:
  for $\alpha$ and $\beta$ inaccessible with $\alpha < \beta$ we let
   ${\mathbb C}(\alpha, \beta) = \coll(\alpha^{+m}, < \beta)$. We note that when
   $\alpha < \beta < \gamma$ we have that  ${\mathbb C}(\alpha, \beta) \subseteq {\mathbb C}(\alpha, \gamma)$
   and the inclusion map is a complete embedding: in particular,
   if $G$ is ${\mathbb C}(\alpha, \gamma)$-generic over $V$ then $G \cap {\mathbb C}(\alpha, \beta)$ is 
   ${\mathbb C}(\alpha, \beta)$-generic over $V$.

\begin{definition}   A {\em $U$-constraint} is a function $H$ such that $\dom(H) \in U$,
   $\dom(H) \subseteq A_{\rm good}$ and $H(x) \in {\mathbb C}(\kappa_x, \kappa)$ for all
   $x \in \dom(H)$.
\end{definition} 

   It is easy to see that ${\mathbb C}^M(\kappa, j(\kappa))$ is the set of objects of the
   form $[H]_U$ for some $U$-constraint $H$. 

\begin{definition} Let $H$ and $H'$ be $U$-constraints.
\begin{enumerate}
\item   $H \le H'$ if and only if $\dom(H) \subseteq \dom(H')$ and $H(x) \le H'(x)$
    for all $x \in \dom(H)$.
\item   $H \le_U H'$ if and only if $\{ x : H(x) \le H'(x) \} \in U$,
   or equivalently $[H]_U \le [H']_U$. 
\end{enumerate}
\end{definition} 
   
\begin{remark} Since $m > n$, and ${}^{\kappa^{+n}} M \subseteq M$ by the hypothesis that $U$ witnesses the
   $\kappa^{+n}$-supercompactness of $\kappa$, 
    it is easy to see that ${\mathbb C}^M(\kappa, j(\kappa))$ is $\kappa^{+n+1}$-closed in $V$. It follows
    that
 any $\le_U$-decreasing sequence of $U$-constraints of length less than $\kappa^{+n+1}$ has a
    $\le_U$-lower bound. 
\end{remark} 

   We define the complete Boolean algebra ${\mathbb B}(\alpha, \beta)$ to be the regular open
  algebra of the forcing poset ${\mathbb C}(\alpha, \beta)$, and then let
  ${\mathbb B} = {\mathbb B}^M(\kappa, j(\kappa))$ and ${\mathbb B}_0 = {\mathbb B}^{M_0}(\kappa, j_0(\kappa))$.
  We note that for every $\alpha < \kappa$ the poset ${\mathbb C}(\alpha, \kappa)$ is $\kappa$-cc and has 
  cardinality $\kappa$, so that ${\mathbb B}(\alpha, \kappa)$ has cardinality $\kappa$:
  by elementarity we see that ${\mathbb B}_0$ has cardinality $j_0(\kappa)$ in $M_0$,
  so that in $V$ we have $\vert {\mathbb B}_0 \vert = 2^\kappa$.    

\begin{remark} Officially elements of ${\mathbb B}(\alpha, \kappa)$ are regular open subsets
    of the poset ${\mathbb C}(\alpha, \kappa)$, so that ${\mathbb B}(\alpha, \kappa)$ is not literally a
    subset of $V_\kappa$. However, since ${\mathbb C}(\alpha, \kappa)$ has the $\kappa$-chain condition,
    ${\mathbb B}(\alpha, \kappa)$ is the direct limit of the sequence of algebras
    $\langle {\mathbb B}(\alpha, \gamma) : \gamma < \kappa \rangle$, so that we may identify
    ${\mathbb B}(\alpha, \kappa)$ with a subset of $V_\kappa$. With this identification we may
    represent elements of ${\mathbb B}_0$ in the form $[h]_{U_0}$, where $h$ is a function from
    $\kappa$ to $V_\kappa$. 

    This becomes important later, when we use such functions $h$ as components of forcing
   conditions in the poset $\mathbb P$. When we move to a generic extension $W$ with the same
   $V_\kappa$ but new subsets of $\kappa$, we will need to know that $h$ can still be interpreted
   as a function which returns an element of ${\mathbb B}(\alpha, \kappa)$ on argument $\alpha$.
\end{remark}

Following Foreman and Woodin, we define a filter $\Fil(H)$ on ${\mathbb B}_0$
 from each $U$-constraint $H$.

\begin{definition}  Let $H$ be a  $U$-constraint and let $A \in U$.
  We define a function
  $b(H, A)$ as follows:
\[
   \dom(b(H, A)) = \{ \kappa_x : x \in \dom(H) \cap A \},
\]
and
\[
   b(H, A)(\alpha)  = \bigvee \{ H(x) :  \mbox{$x \in \dom(H) \cap A$ and $\kappa_x = \alpha$} \}.
\]
\end{definition} 
   In the definition of $b(H, A)(\alpha)$ we are forming the Boolean supremum of a nonempty subset of 
   ${\mathbb C}(\alpha, \kappa)$, thereby defining a nonzero element of
   ${\mathbb B}(\alpha, \kappa)$. 
  Since  $\{ \kappa_x : x \in \dom(H) \cap A \} \in U_0$, the function 
 $b(H, A)$ is defined on a $U_0$-large set and so represents a nonzero  element of the Boolean algebra ${\mathbb B}_0$ in the ultrapower $M_0$.

\begin{lemma} \label{shrinklemma}
   Let $H$ be a $U$-constraint and let $A_1, A_2 \in U$ be such that $A_2 \subseteq A_1$.
   Then   $\dom(b(H, A_2)) \subseteq \dom(b(H, A_1))$
  and $b(H, A_2)(\alpha)  \le b(H, A_1)(\alpha)$ for all
  $\alpha \in \dom(b(H, A_2))$.
\end{lemma}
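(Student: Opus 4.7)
The plan is to prove this lemma by directly unpacking the definitions of $\dom(b(H, A))$ and $b(H, A)(\alpha)$, and then invoking the elementary monotonicity of set inclusion and of Boolean suprema. No delicate ultrapower arguments are needed, since both assertions are pointwise statements about the function $b(H, A)$ viewed as an object of $V$ rather than as a representative in $M_0$.

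First I would show the domain containment. By definition, $\dom(b(H, A_i)) = \{\kappa_x : x \in \dom(H) \cap A_i\}$ for $i = 1, 2$. Since $A_2 \subseteq A_1$, we have $\dom(H) \cap A_2 \subseteq \dom(H) \cap A_1$, and applying $x \mapsto \kappa_x$ to both sides gives $\dom(b(H, A_2)) \subseteq \dom(b(H, A_1))$.

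Next I would handle the pointwise comparison. Fix $\alpha \in \dom(b(H, A_2))$. From the domain containment above we also have $\alpha \in \dom(b(H, A_1))$, so the expressions $b(H, A_i)(\alpha) = \bigvee \{H(x) : x \in \dom(H) \cap A_i, \ \kappa_x = \alpha\}$ are both well-defined nonzero elements of ${\mathbb B}(\alpha, \kappa)$. The set over which we take the supremum for $i = 2$ is a subset of the corresponding set for $i = 1$, again using $A_2 \subseteq A_1$. Since Boolean suprema are monotone with respect to set inclusion of the indexing family, we conclude $b(H, A_2)(\alpha) \le b(H, A_1)(\alpha)$.

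There is no real obstacle in this proof; it is a bookkeeping lemma whose role is to justify later arguments (presumably when one wants to define a filter by taking the $\le$-upward closure of the collection of elements represented by $b(H, A)$, or when one wants to treat $b(H, \cdot)$ as a $\le_U$-decreasing family as $A$ shrinks within $U$). The only mild thing to notice is that the suprema are taken in the complete Boolean algebras ${\mathbb B}(\alpha, \kappa)$ for individual $\alpha$, not in ${\mathbb B}_0$; but this is precisely how $b(H, A)$ was defined, and no passage through $M_0$ is required for either conclusion.
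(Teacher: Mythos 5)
Your proof is correct and is exactly the routine unpacking the paper has in mind; the paper itself labels the proof ``Straightforward'' and omits it, and your two observations (monotonicity of the domain under $x \mapsto \kappa_x$ and monotonicity of Boolean suprema with respect to the indexing set) are precisely the content being elided.
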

\begin{proof} Straightforward.
\end{proof} 

   It follows immediately that the set $\{ [b(H, A)]_{U_0} : A \in U \}$
   forms a filter base on ${\mathbb B}_0$.

\begin{definition} 
   Let $H$ be a $U$-constraint. 
  Then  $\Fil(H)$ is the filter generated by 
$\{ [b(H, A)]_{U_0} : A \in U \}$.
\end{definition} 

\begin{lemma} \label{strengthenlemma}
  If $H_2 \le_U H_1$ then $\Fil(H_1) \subseteq \Fil(H_2)$.
\end{lemma}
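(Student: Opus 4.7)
The plan is to show that every generator of $\Fil(H_1)$ lies above some generator of $\Fil(H_2)$ in ${\mathbb B}_0$, which by definition of the generated filter gives the desired inclusion.

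Fix $A \in U$; I must produce $A' \in U$ with $[b(H_2, A')]_{U_0} \le [b(H_1, A)]_{U_0}$. Let $A_0 = \{x \in \dom(H_2) \cap \dom(H_1) : H_2(x) \le H_1(x)\}$, which lies in $U$ by the hypothesis $H_2 \le_U H_1$, and set $A' = A \cap A_0$. Then $A' \in U$ and $A' \subseteq A$, so by Lemma \ref{shrinklemma} we have $\dom(b(H_1, A')) \subseteq \dom(b(H_1, A))$ and $b(H_1, A')(\alpha) \le b(H_1, A)(\alpha)$ for every $\alpha$ in the domain of $b(H_1, A')$.

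Next I compare $b(H_2, A')$ to $b(H_1, A')$ pointwise. For $x \in \dom(H_2) \cap A'$ we have $x \in A_0$, hence $H_2(x) \le H_1(x)$, and since $x$ also lies in $\dom(H_1) \cap A'$ the element $H_1(x)$ appears in the join defining $b(H_1, A')(\kappa_x)$. Thus $H_2(x) \le H_1(x) \le b(H_1, A')(\kappa_x)$ for every such $x$. Taking the Boolean supremum over all $x \in \dom(H_2) \cap A'$ with $\kappa_x = \alpha$ gives $b(H_2, A')(\alpha) \le b(H_1, A')(\alpha)$ on the domain of $b(H_2, A')$. Combining with the inequality from Lemma \ref{shrinklemma}, $b(H_2, A')(\alpha) \le b(H_1, A)(\alpha)$ on a $U_0$-large set of $\alpha$, so $[b(H_2, A')]_{U_0} \le [b(H_1, A)]_{U_0}$ in ${\mathbb B}_0$.

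This shows $[b(H_1, A)]_{U_0} \in \Fil(H_2)$ for every $A \in U$, and since the elements $[b(H_1, A)]_{U_0}$ generate $\Fil(H_1)$, we conclude $\Fil(H_1) \subseteq \Fil(H_2)$. The only real subtlety is bookkeeping around domains: one must intersect with $\dom(H_1) \cap \dom(H_2)$ when extracting the set where $H_2 \le H_1$, and one must verify that the set of $\alpha$ witnessing the pointwise inequality is $U_0$-large, both of which follow because $A'$ is a subset of $\dom(H_2) \cap A_{\rm good}$ lying in $U$ and its projection under $x \mapsto \kappa_x$ lies in $U_0$. No further ingredients beyond Lemma \ref{shrinklemma} and the definitions are needed.
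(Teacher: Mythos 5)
Your proof is correct and is essentially the argument the authors intend (the paper simply records the proof as ``Straightforward''). You identify the right generators, intersect $A$ with the $U$-large set where $H_2 \le H_1$ pointwise, apply Lemma \ref{shrinklemma}, and compare the joins; no gaps.
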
 
\begin{proof} Straightforward.
\end{proof} 

\begin{lemma} \label{decidelemma} For every $U$-constraint $H$ and every Boolean value $\mathfrak b$ in  
  ${\mathbb B}_0$, there is $H' \le_U H$ such that either
  $\mathfrak b \in \Fil(H')$ or $\neg  \mathfrak b \in \Fil(H')$.
\end{lemma}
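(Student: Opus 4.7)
The plan is to use the fact that $\mathbb{C}(\alpha,\kappa)$ is dense in its completion $\mathbb{B}(\alpha,\kappa)$ pointwise at each $x$, and then pull the result up to $\mathbb{B}_0$ via Łoś's theorem.

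First I would choose a function $h$ representing $\mathfrak{b}$, i.e.\ $\mathfrak{b} = [h]_{U_0}$, where by Łoś we may assume $h(\alpha) \in \mathbb{B}(\alpha, \kappa)$ for every $\alpha$ in a set $B \in U_0$. After shrinking $\dom(H)$ to $\{x \in \dom(H) : \kappa_x \in B\}$ (still in $U$), I can assume $h(\kappa_x)$ is a well-defined element of $\mathbb{B}(\kappa_x,\kappa)$ for every $x \in \dom(H)$.

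Next, for each such $x$, I would observe that $\mathbb{C}(\kappa_x,\kappa)$ is dense in $\mathbb{B}(\kappa_x,\kappa)$. Hence given $H(x) \in \mathbb{C}(\kappa_x,\kappa)$, either $H(x) \wedge h(\kappa_x) = 0$ in $\mathbb{B}(\kappa_x,\kappa)$, in which case $H(x) \le \neg h(\kappa_x)$ and I set $H'(x) = H(x)$; or else I can find $H'(x) \in \mathbb{C}(\kappa_x,\kappa)$ with $H'(x) \le H(x)$ and $H'(x) \le h(\kappa_x)$. Using AC, this defines a $U$-constraint $H' \le H$ pointwise (hence $H' \le_U H$), and splits $\dom(H)$ as $A_+ \sqcup A_-$, where on $A_+$ we have $H'(x) \le h(\kappa_x)$ and on $A_-$ we have $H'(x) \le \neg h(\kappa_x)$.

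One of $A_+, A_-$ lies in $U$; call it $A$. Then for every $\alpha \in \dom(b(H', A))$, every $x \in \dom(H') \cap A$ with $\kappa_x = \alpha$ satisfies $H'(x) \le h(\alpha)$ (resp.\ $\le \neg h(\alpha)$), so the Boolean supremum satisfies $b(H', A)(\alpha) \le h(\alpha)$ (resp.\ $\le \neg h(\alpha)$). Since $\dom(b(H',A)) = \{\kappa_x : x \in \dom(H') \cap A\} \in U_0$, Łoś yields $[b(H',A)]_{U_0} \le \mathfrak{b}$ (resp.\ $\le \neg\mathfrak{b}$) in $\mathbb{B}_0$, so $\mathfrak{b} \in \Fil(H')$ (resp.\ $\neg\mathfrak{b} \in \Fil(H')$). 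No step poses real difficulty; the only mild subtlety is remembering to shrink $\dom(H)$ first so that $h(\kappa_x)$ is literally an element of $\mathbb{B}(\kappa_x,\kappa)$ and the pointwise density argument applies in $V$ rather than only in $M_0$.
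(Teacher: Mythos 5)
Your proof is correct and follows essentially the same route as the paper's: represent $\mathfrak{b}$ by a function, shrink the domain of $H$ so that the function is defined at $\kappa_x$, pointwise refine $H(x)$ (justified by density of $\mathbb{C}(\kappa_x,\kappa)$ in $\mathbb{B}(\kappa_x,\kappa)$) so that it decides $h(\kappa_x)$, pass to whichever side is $U$-large, and conclude via $b(H',A)$. The only cosmetic difference is that the paper restricts $H'$ to the $U$-large side $A$ whereas you keep $H'$ defined on the whole shrunken domain and invoke $b(H',A)$ directly; this changes nothing.
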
 

\begin{proof} We may assume that $\mathfrak b$ is non-zero.
 Let ${\mathfrak b} = [f]_{U_0}$, where $f(\alpha) \in {\mathbb B}(\alpha, \kappa)$
  and $f(\alpha)$ is non-zero  for all $\alpha \in \dom(f)$.  
  Let $A_0  = \{ x \in \dom(H) : \kappa_x \in \dom(f) \}$ and observe that 
  $A_0 \in U$. 

  For each $x$ in $A_0$, we may choose $H^*(x) \le H(x)$ such that either
  $H^*(x) \le f(\kappa_x)$ or $H^*(x) \le \neg f(\kappa_x)$. Let
  $A_1 = \{ x \in A_0 : H^*(x) \le f(\kappa_x) \}$. If $A_1 \in U$ then define
  $H' = H^* \restriction A_1$, otherwise 
  define $H'= H^* \restriction (A_0 - A_1)$. 

  If $A_1 \in U$ then consider the function $b(H', A_1)$. For every relevant $\alpha$
   we see that $b(H', A_1)(\alpha)$ is computed as a Boolean
   supremum of values which are bounded by $f(\alpha)$, so that $b(H', A_1)(\alpha) \le f(\alpha)$.
   Hence $[b(H', A_1)]_{U_0} \le [f]_{U_0}$, and accordingly
   $\mathfrak b \in \Fil(H')$. Similarly if $A_1 \notin U$ then
   $\neg \mathfrak b \in \Fil(H')$. 
\end{proof} 

\begin{lemma} \label{uflemma}  For every $U$-constraint $H$ there is $H' \le_U H$
    such that $\Fil(H')$ is an ultrafilter on ${\mathbb B}_0$.
\end{lemma}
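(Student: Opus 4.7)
The plan is to iterate Lemma \ref{decidelemma} transfinitely, using the closure remark after the definition of $\le_U$ to handle limit stages. Recall that $\vert {\mathbb B}_0 \vert = 2^\kappa = \kappa^{+n}$ in $V$, while the $\le_U$-ordering on $U$-constraints is $\kappa^{+n+1}$-closed, so there is exactly enough closure to carry out a recursion of length $\kappa^{+n}$.

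First I would fix an enumeration ${\mathbb B}_0 = \{ \mathfrak{b}_\alpha : \alpha < \kappa^{+n} \}$, then construct by recursion on $\alpha \le \kappa^{+n}$ a $\le_U$-decreasing sequence $\langle H_\alpha : \alpha \le \kappa^{+n} \rangle$ of $U$-constraints as follows. Set $H_0 = H$. At a successor step, apply Lemma \ref{decidelemma} to $H_\alpha$ and $\mathfrak{b}_\alpha$ to obtain $H_{\alpha+1} \le_U H_\alpha$ such that $\Fil(H_{\alpha+1})$ decides $\mathfrak{b}_\alpha$. At a limit stage $\alpha \le \kappa^{+n}$, note that $\alpha < \kappa^{+n+1}$, so the sequence of equivalence classes $\langle [H_\beta]_U : \beta < \alpha \rangle$ is a descending chain in ${\mathbb C}^M(\kappa, j(\kappa))$ of length less than $\kappa^{+n+1}$, which by the closure remark has a lower bound; pick any $U$-constraint $H_\alpha$ representing such a lower bound, so that $H_\alpha \le_U H_\beta$ for all $\beta < \alpha$.

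Then I would set $H' = H_{\kappa^{+n}}$. By Lemma \ref{strengthenlemma}, $\Fil(H_{\alpha+1}) \subseteq \Fil(H')$ for every $\alpha < \kappa^{+n}$, so $\Fil(H')$ decides every element of ${\mathbb B}_0$. Since $H'$ is itself a $U$-constraint, the filter base $\{[b(H', A)]_{U_0} : A \in U\}$ consists of non-zero elements of ${\mathbb B}_0$, so $\Fil(H')$ is a proper filter on ${\mathbb B}_0$; being proper and deciding every Boolean value, it is an ultrafilter.

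The only potentially delicate point is the bookkeeping at limit stages, and specifically the fact that a $\le_U$-lower bound of a descending chain of $U$-constraints really exists inside $V$ (not merely in $M$). This is handled cleanly by the remark following the definition of $\le_U$: because $U$ witnesses the $\kappa^{+n}$-supercompactness of $\kappa$ and $m > n$, the poset ${\mathbb C}^M(\kappa, j(\kappa))$ is $\kappa^{+n+1}$-closed \emph{as computed in $V$}, so any $V$-sequence of $U$-constraints of length $< \kappa^{+n+1}$ admits a $\le_U$-lower bound represented by some $U$-constraint in $V$; everything else in the argument is a routine application of Lemma \ref{decidelemma} and Lemma \ref{strengthenlemma}.
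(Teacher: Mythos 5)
Your proposal is correct and matches the paper's proof, which cites exactly these ingredients (Lemma \ref{decidelemma}, the cardinality $\vert {\mathbb B}_0 \vert = 2^\kappa = \kappa^{+n}$, and the $\kappa^{+n+1}$-closure of $\le_U$) and leaves the transfinite recursion to the reader. Your write-up simply spells out the bookkeeping that the paper compresses into one sentence.
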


\begin{proof}  This follows immediately from the preceding lemmas,
     the observation that $\vert {\mathbb B}_0 \vert = 2^\kappa$,
    and the fact that any $\le_U$-decreasing $2^\kappa$-sequence of $U$-constraints   has a lower bound,
\end{proof} 

\begin{lemma} \label{maxlemma}  Let $H'$ and $H''$ be $U$-constraints such that $\Fil(H')$ is an ultrafilter on ${\mathbb B}_0$ and
   $H'' \le_U H'$. Then $\Fil(H') = \Fil(H'')$.
\end{lemma}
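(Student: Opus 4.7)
The plan is to use ultrafilter-ness of $\Fil(H')$ together with Lemma \ref{strengthenlemma} to squeeze $\Fil(H'')$ between $\Fil(H')$ and itself.

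First, by Lemma \ref{strengthenlemma} applied to $H'' \le_U H'$, we already have $\Fil(H') \subseteq \Fil(H'')$, so only the reverse inclusion needs to be established. Take an arbitrary $\mathfrak b \in \Fil(H'')$; the goal is to show $\mathfrak b \in \Fil(H')$. Since $\Fil(H')$ is an ultrafilter on ${\mathbb B}_0$, either $\mathfrak b \in \Fil(H')$ (in which case we are done) or $\neg \mathfrak b \in \Fil(H')$. In the second case, Lemma \ref{strengthenlemma} gives $\neg \mathfrak b \in \Fil(H'')$ as well, so $\Fil(H'')$ would contain both $\mathfrak b$ and $\neg \mathfrak b$.

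To complete the argument I need to note that $\Fil(H'')$ is a \emph{proper} filter, so it cannot contain both a Boolean value and its negation. This is essentially built into the definition: for $A_1, A_2 \in U$ the set $A_1 \cap A_2$ also belongs to $U$, and Lemma \ref{shrinklemma} shows that $[b(H'', A_1 \cap A_2)]_{U_0}$ lies below both $[b(H'', A_1)]_{U_0}$ and $[b(H'', A_2)]_{U_0}$ in ${\mathbb B}_0$. Hence the generating family $\{[b(H'', A)]_{U_0} : A \in U\}$ is directed downward and consists of nonzero elements (each $b(H'', A)(\alpha)$ is a Boolean join of nonzero elements of ${\mathbb C}(\alpha,\kappa)$), so the filter it generates is proper.

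No real obstacle is expected; the whole content is an unpacking of ``ultrafilter plus Lemma \ref{strengthenlemma}''. The only place where one has to be a bit careful is the (essentially trivial) verification that $\Fil(H'')$ is proper, which boils down to the directedness observation just made. Everything else is formal.
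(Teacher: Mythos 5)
Your proof is correct, and since the paper's proof is simply labelled ``Straightforward,'' your argument is presumably exactly what the authors had in mind: $\Fil(H')\subseteq\Fil(H'')$ by Lemma \ref{strengthenlemma}, an ultrafilter is maximal among proper filters, and $\Fil(H'')$ is proper because its generating set $\{[b(H'',A)]_{U_0}:A\in U\}$ is downward directed (Lemma \ref{shrinklemma}) and consists of nonzero elements. The one step worth flagging explicitly, as you did, is the properness of $\Fil(H'')$; once that is noted the rest is a formal consequence of maximality.
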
 

\begin{proof}
  Straightforward. 
\end{proof}

   It will be convenient for the arguments of Section \ref{mainsection} to formulate these ideas 
  in a slightly different language. Recall that there is an elementary embedding
  $k: M_0 \rightarrow M$ such that $k \circ j_0 = j$, given by the
  formula $k: [f]_{U_0} \mapsto j(f)(\kappa)$.

\begin{lemma} \label{naturalitylemma}
 For any $U$-constraint $H$,
\[
     \Fil(H) = \{ {\mathfrak b} \in {\mathbb B}_0 : [H]_U \le_{\mathbb B} k({\mathfrak b}) \}.
\]
\end{lemma}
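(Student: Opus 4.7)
The plan is to prove both inclusions by running Ło\'s's theorem for $U$ and $U_0$ and bookkeeping the action of $k$.

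First I would fix a representative: write $\mathfrak b = [f]_{U_0}$ where $f(\alpha)$ is a non-zero element of $\mathbb{B}(\alpha,\kappa)$ for $U_0$-almost every $\alpha$. Observe that the function $G$ defined by $G(x) = f(\kappa_x)$ satisfies $[G]_U = k(\mathfrak b)$: by the elementarity used in computing ultrapowers, $j(G)(j``\kappa^{+n}) = j(f)(\kappa_{j``\kappa^{+n}}) = j(f)(\kappa)$, where the last equality uses that $j``\kappa^{+n}\cap j(\kappa) = \kappa$ (since $\kappa$ is the critical point of $j$). Hence the condition $[H]_U \le_{\mathbb B} k(\mathfrak b)$ is, by Ło\'s, equivalent to
\[
   A_f := \{\, x \in \dom(H) : H(x) \le f(\kappa_x)\,\} \in U.
\]

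For the inclusion $\supseteq$, assume $A_f \in U$. Then for every $\alpha$ in $\dom(b(H,A_f))$, each disjunct $H(x)$ in the supremum defining $b(H,A_f)(\alpha)$ is bounded by $f(\kappa_x) = f(\alpha)$, so $b(H,A_f)(\alpha) \le f(\alpha)$ pointwise on $\dom(b(H,A_f))$. Applying Ło\'s in $U_0$ gives $[b(H,A_f)]_{U_0} \le [f]_{U_0} = \mathfrak b$, so $\mathfrak b \in \Fil(H)$.

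For the inclusion $\subseteq$, assume $\mathfrak b \in \Fil(H)$ and pick $A \in U$ with $[b(H,A)]_{U_0} \le \mathfrak b$. Then by Ło\'s in $U_0$ there is a set $D \in U_0$ of $\alpha$'s with $b(H,A)(\alpha) \le f(\alpha)$. Using that the projection of $U$ under $x\mapsto \kappa_x$ is $U_0$, the set $A' := \{ x \in A \cap \dom(H) : \kappa_x \in D\}$ is in $U$; for $x \in A'$ we have $H(x) \le b(H,A)(\kappa_x) \le f(\kappa_x)$. So $A' \subseteq A_f$ and $A_f \in U$, which by the equivalence of the first paragraph gives $[H]_U \le_{\mathbb B} k(\mathfrak b)$.

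The only point requiring care is the computation of $k(\mathfrak b)$ as a $U$-ultrapower class; the rest is routine application of Ło\'s's theorem in the two ultrapowers and the fact that $U$ projects to $U_0$ under $x\mapsto \kappa_x$. I do not expect any serious obstacle beyond keeping track of these two Ło\'s applications in parallel.
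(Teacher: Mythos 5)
Your proof is correct and follows essentially the same route as the paper's: both fix a representative $f$ for $\mathfrak b$, reduce the condition $[H]_U\le_{\mathbb B}k(\mathfrak b)$ to the statement $\{x\in\dom(H):H(x)\le f(\kappa_x)\}\in U$, and then prove the two inclusions by the same pointwise comparison of $b(H,A)(\alpha)$ with $f(\alpha)$. The only difference is cosmetic: you spell out the computation of $k(\mathfrak b)$ as a $U$-class via the auxiliary function $G(x)=f(\kappa_x)$, a step the paper compresses to ``easily.''
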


\begin{proof} 
    Let $f$ be a typical function representing an element $\mathfrak b$ of ${\mathbb B}_0$,
    that is to say $\dom(f) \in U_0$ and $f(\alpha) \in {\mathbb B}(\alpha, \kappa)$ for all $\alpha$.
    Now $k({\mathfrak b}) = j(f)(\kappa)$, and $[H]_U = j(H)(j`` \kappa^{+n})$, so that
    easily $[H]_U \le j(f)(\kappa)$ if and only if $\{ x \in \dom(H) : H(x) \le f(\kappa_x) \} \in U$. 

    If ${\mathfrak b} \in \Fil(H)$ then by definition there is a set $A \in U$ such that
    $[b(H, A)]_{U_0} \le [f]_{U_0}$, that is to say 
    $B =_{\rm def} \{ \alpha : b(H, A)(\alpha) \le f(\alpha) \} \in U_0$.
    Now let $A' = A \cap \dom(H) \cap \{ x : \kappa_x \in B \}$. Clearly
    $A' \in U$; fix $x \in A'$  and observe that 
$
    H(x) \le b(H, A)(\kappa_x) \le f(\kappa_x),
$
    where the first inequality holds because $x \in \dom(H) \cap A$ 
    and the second one holds because $\kappa_x \in B$. We have shown that 
    $\{ x \in \dom(H) : H(x) \le f(\kappa_x) \} \in U$, so that $[H]_U \le_{\mathbb B} k({\mathfrak b})$.

    Conversely, if $[H]_U \le_{\mathbb B} k({\mathfrak b})$ we let $A=\{ x \in \dom(H) : H(x) \le f(\kappa_x) \}$.
    Then $\dom( b(H, A) ) = \{ \kappa_x : x \in A \}$. For every $\alpha$ in this set we have
    that
 \[
    b(H, A)(\alpha) = \bigvee \{ H(x) :  \mbox{$x \in A$ and $\kappa_x = \alpha$} \} \le f(\alpha),
\]
    where the second claim follows since (by the definition of $A$) we are forming the Boolean supremum of a set of
   values which is bounded by $f(\alpha)$.
\end{proof}

      We conclude this discussion of constraints and filters by collecting some technical facts 
    about filters of the form $\Fil(H)$ which will be useful when we define the forcing poset
    $\mathbb P$. 

\begin{definition}  A {\em $U_0$-constraint} is a partial function $h$ from $\kappa$ to $V_\kappa$ such that
   $\dom(h) \in U_0$, $\dom(h)$ is a set of inaccessible cardinals,
   and $h(\alpha) \in {\mathbb B}(\alpha, \kappa)$ for all $\alpha \in \dom(h)$.
\end{definition} 

   Clearly ${\mathbb B}_0$ is the set of objects of the form $[h]_{U_0}$ where
   $h$ is a $U_0$-constraint. 
 
\begin{definition} Let $h$ and $h'$ be $U_0$-constraints.
\begin{enumerate}
\item   $h \le h'$ if and only if $\dom(h) \subseteq \dom(h')$ and $h(\alpha) \le h'(\alpha)$
    for all $\alpha \in \dom(h)$.
\item   $h \le_{U_o} h'$ if and only if $\{ \alpha : h(\alpha) \le h'(\alpha) \} \in U_0$
   or equivalently $[h]_{U_0} \le [h']_{U_0}$. 
\end{enumerate}
\end{definition} 

\begin{lemma} \label{normalform} Let $h$ be a $U_0$-constraint and let $H$ be a $U$-constraint.
   If $[h]_{U_0} \in \Fil(H)$,  then
  there is $B \in U$ such that
   $b(H, B) \le h$.
\end{lemma}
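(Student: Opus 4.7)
The plan is to unpack what $[h]_{U_0} \in \Fil(H)$ gives us and then cut down the witnessing $U$-set using the projection relationship between $U$ and $U_0$.

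First I would expand the definition of $\Fil(H)$: since $[h]_{U_0}$ lies in the filter generated by $\{[b(H,A)]_{U_0} : A \in U\}$, there is some $A \in U$ with $[b(H,A)]_{U_0} \le [h]_{U_0}$ in ${\mathbb B}_0$. Pointwise, this means the set
\[
   C \;=\; \{\alpha \in \dom(b(H,A)) \cap \dom(h) : b(H,A)(\alpha) \le h(\alpha)\}
\]
belongs to $U_0$.

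Next I would use the fact that $U_0$ is the projection of $U$ under the map $x \mapsto \kappa_x$: any set in $U_0$ pulls back to a set in $U$. Define
\[
   B \;=\; \{x \in \dom(H) \cap A : \kappa_x \in C\}.
\]
Since $\{x : \kappa_x \in C\} \in U$ and $\dom(H) \cap A \in U$, we get $B \in U$, so $B$ is a legitimate candidate index set for $b(H,\cdot)$.

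Finally I would verify $b(H,B) \le h$ as actual (not just $U_0$-almost-everywhere) functions. By construction $\dom(b(H,B)) = \{\kappa_x : x \in B\} \subseteq C \subseteq \dom(h)$. For each $\alpha \in \dom(b(H,B))$, $\alpha \in C$ gives $b(H,A)(\alpha) \le h(\alpha)$; and since $B \subseteq A \cap \dom(H)$, Lemma \ref{shrinklemma} yields $b(H,B)(\alpha) \le b(H,A)(\alpha)$. Chaining these inequalities completes the proof. There is no real obstacle here beyond bookkeeping: the only subtle point is recognising that we must use the projection of $U$ to $U_0$ to upgrade an $U_0$-almost-everywhere inequality to a genuine pointwise inequality on the domain of $b(H,B)$, which is exactly what shrinking the index set from $A$ to $B$ accomplishes.
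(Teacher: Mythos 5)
Your proof is correct and follows essentially the same route as the paper's: extract an $A \in U$ with $b(H,A) \le_{U_0} h$ from the definition of $\Fil(H)$, then shrink $A$ to a $U$-large $B$ where the pointwise inequality actually holds (the paper defines $B = \{ x \in A : b(H, A)(\kappa_x) \le h(\kappa_x) \}$, which is your $B$ up to harmless bookkeeping), and finish with Lemma \ref{shrinklemma}. You have simply spelled out the "routine to check" part of the paper's proof.
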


\begin{proof} Observe that by definition there is
   $A \in U$ such that $b(H, A) \le_{U_0} h$, and define
\[
   B = \{ x \in A : b(H, A)(\kappa_x) \le h(\kappa_x) \}. 
\] 
   It is routine to check that this $B$ works.
\end{proof} 

    We now record some crucial properties of filters of the form $\Fil(H)$. In the sequel we will 
    limit attention to the special case in which $\Fil(H)$ is an ultrafilter, but only
   Lemma \ref{capturinglemma} actually requires this assumption.

\begin{lemma}[$\kappa$-completeness Lemma] \label{kappacomplete}
 Let $H$ be a $U$-constraint, let $\eta < \kappa$ and
   let $\langle h_i : i < \eta \rangle$ be a sequence of $U_0$-constraints
   such that $[h_i] \in \Fil(H)$ for all $i$. Then there exists
   a $U_0$-constraint $h$ such that 
   $[h] \in \Fil(H)$ and $h \le h_i$ for all $i$.
\end{lemma}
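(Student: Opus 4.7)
The plan is to reduce the $\le$-domination assertion (which is strictly stronger than the $\le_{U_0}$-domination that is automatic from the inequality in ${\mathbb B}_0$) to the concrete pointwise inequalities supplied by Lemma \ref{normalform}, and then to exploit the $\kappa$-completeness of $U$ to intersect the associated sets in $U$.

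First I would apply Lemma \ref{normalform} to each $i < \eta$: since $[h_i]_{U_0} \in \Fil(H)$, there is a set $B_i \in U$ such that $b(H, B_i) \le h_i$, meaning $\dom(b(H,B_i)) \subseteq \dom(h_i)$ and $b(H,B_i)(\alpha) \le h_i(\alpha)$ for every $\alpha \in \dom(b(H,B_i))$. Next, since $U$ is $\kappa$-complete and $\eta < \kappa$, the set $B = \bigcap_{i < \eta} B_i \cap \dom(H)$ lies in $U$.

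Now define $h = b(H, B)$; explicitly, $\dom(h) = \{\kappa_x : x \in B\}$ and
\[
  h(\alpha) = \bigvee\{H(x) : x \in B,\ \kappa_x = \alpha\}.
\]
By construction $[h]_{U_0} = [b(H,B)]_{U_0} \in \Fil(H)$. To verify $h \le h_i$ for each $i$, I would chain two inequalities: by Lemma \ref{shrinklemma} applied to the inclusion $B \subseteq B_i$ we get $\dom(h) \subseteq \dom(b(H,B_i))$ and $h(\alpha) \le b(H,B_i)(\alpha)$ for $\alpha \in \dom(h)$; then the inequality $b(H,B_i) \le h_i$ from the first paragraph gives $\dom(b(H,B_i)) \subseteq \dom(h_i)$ and $b(H,B_i)(\alpha) \le h_i(\alpha)$. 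Composing these yields $\dom(h) \subseteq \dom(h_i)$ and $h(\alpha) \le h_i(\alpha)$ throughout $\dom(h)$, which is exactly $h \le h_i$.

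There is really no hard step here; the only thing to be careful about is the distinction between $\le$ and $\le_{U_0}$ on $U_0$-constraints. Merely invoking that $\Fil(H)$ is closed under finite (or even $\kappa$-size) meets in ${\mathbb B}_0$ would give an $h$ with $h \le_{U_0} h_i$, not the pointwise $h \le h_i$ demanded by the statement. Lemma \ref{normalform} is precisely the tool that converts filter membership into a concrete pointwise bound coming from a single $U$-set, and once one has that, the $\kappa$-completeness of $U$ delivers a uniform $B$ working for all $i$ simultaneously.
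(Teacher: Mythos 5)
Your proof is correct and follows exactly the same route as the paper's: apply Lemma~\ref{normalform} to get sets $B_i \in U$ with $b(H, B_i) \le h_i$, intersect using $\kappa$-completeness of $U$, and appeal to Lemma~\ref{shrinklemma} to show $b(H, B) \le b(H, B_i) \le h_i$. Your closing remark about why simple filter closure gives only $\le_{U_0}$-domination (and hence why Lemma~\ref{normalform} is the right tool to get pointwise $\le$) is exactly the right thing to flag.
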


\begin{proof} Appealing to Lemma \ref{normalform} we choose for each
   $i < \eta$ a set $B_i \in U$ such that $b(H, B_i) \le h_i$. 
   Let $B = \bigcap_i B_i$, then $B \in U$ and it follows from
   Lemma \ref{shrinklemma} that $b(H, B) \le b(H, B_i) \le h_i$ for all $i < \eta$.
\end{proof}  

\begin{definition} 
Given a set $s \in V_\kappa$  and a $U_0$-constraint $h$, we define
\[
   h \downharpoonleft s = h \restriction \{ \alpha : s \in V_\alpha \}.
\]
\end{definition} 

\begin{lemma}[Normality lemma] \label{normalitylemma}  Let $H$ be a $U$-constraint, let $I \subseteq V_\kappa$ 
    and let  $\langle h_s : s \in I \rangle$ be an $I$-indexed
   family of $U_0$-constraints such that $[h_s]_{U_0} \in \Fil(H)$ for all $s$.
    Then there exists a $U_0$-constraint $h$ such that
    $[h]_{U_0} \in \Fil(H)$ and $h \downharpoonleft s \le h_s$ for all $s$.
\end{lemma}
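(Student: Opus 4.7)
The plan is to reduce to the Normal Form Lemma (Lemma~\ref{normalform}) family by family, and then glue everything with a single diagonal intersection over $V_\kappa$ in the $U$-measure, finally taking $h = b(H,B^*)$ for the resulting set $B^* \in U$.

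First, for each $s \in I$, apply Lemma~\ref{normalform} to choose $B_s \in U$ with $b(H, B_s) \le h_s$. Define
\[
  B^* = \{\, x \in \dom(H) : \text{for every } s \in I \cap V_{\kappa_x},\ x \in B_s \,\},
\]
and claim $B^* \in U$. This is the crux of the argument, and the main obstacle. The point is that although $|I|$ can be as large as $\kappa$ (so $\kappa$-completeness of $U$ is not enough on its own), the condition $s \in V_{\kappa_x}$ behaves like a ``diagonal'' restriction. Concretely, since $\crit(j) = \kappa$ and $I \subseteq V_\kappa = V_\kappa^M$, the map $j$ fixes each $s \in I$ and we have $j(I) \cap V_\kappa = I$. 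Hence, evaluating $j(B^*)$ at $j`` \kappa^{+n}$ (which has $\kappa_{j``\kappa^{+n}} = \kappa$) reduces to checking that $j``\kappa^{+n} \in j(B_s)$ for each $s \in I$, and this holds because $B_s \in U$. Therefore $j``\kappa^{+n} \in j(B^*)$, so $B^* \in U$.

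Next set $h = b(H, B^*)$. By definition of $\Fil(H)$, $[h]_{U_0} \in \Fil(H)$, and $h$ is a genuine $U_0$-constraint since $B^* \subseteq \dom(H)$ and $B^* \in U$. It remains to check $h \downharpoonleft s \le h_s$ for every $s \in I$. Suppose $\alpha \in \dom(h \downharpoonleft s)$, so $\alpha = \kappa_x$ for some $x \in B^*$ and $s \in V_\alpha = V_{\kappa_x}$. The defining property of $B^*$ gives $x \in B_s$, so $\alpha = \kappa_x \in \dom(b(H,B_s)) \subseteq \dom(h_s)$. Moreover, any $x' \in B^*$ with $\kappa_{x'} = \alpha$ also satisfies $x' \in B_s$ (since $s \in V_\alpha$), so
\[
  h(\alpha) = \bigvee\{H(x') : x' \in B^*,\ \kappa_{x'}=\alpha\} \le \bigvee\{H(x') : x' \in B_s,\ \kappa_{x'}=\alpha\} = b(H,B_s)(\alpha) \le h_s(\alpha).
\]
This gives both the domain containment and the pointwise inequality, so $h \downharpoonleft s \le h_s$, completing the proof.
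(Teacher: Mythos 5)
Your proof is correct and follows the paper's own argument essentially verbatim: choose $B_s \in U$ with $b(H,B_s)\le h_s$ via Lemma~\ref{normalform}, form the diagonal set $B^* = \{x \in \dom(H) : \forall s \in I \cap V_{\kappa_x}\; x \in B_s\}$, and take $h = b(H,B^*)$. Where the paper tersely appeals to ``the normality of $U$'' to conclude $B^*\in U$, you spell this step out using the characterization $U = \{A : j``\kappa^{+n} \in j(A)\}$ together with the fact that $j$ is the identity on $V_\kappa$ (so $j(I)\cap V_\kappa = I$), which is a correct and transparent unpacking of exactly the same fact.
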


\begin{proof} Choose for each $s \in I$ a set $A_s \in U$ such that
    $b(H, A_s) \le h_s$. By the normality of $U$ it follows that
if we set $A = \{ x \in \dom(H): \forall s \in I \cap V_{\kappa_x} \; x \in A_s \}$
  then $A \in U$. Let $h = b(H, A)$.

  To show this works we fix $\alpha \in \dom(h)$ and $s \in I \cap V_\alpha$.
By definition
\[
    h(\alpha) = \bigvee_{x \in A, \kappa_x = \alpha} H(x).
\]
   For every $x$ involved in this supremum we have $s \in V_{\kappa_x}$,
   so that $x \in A_s$. Hence easily
\[
    h(\alpha) = b(H, A)(\alpha) \le b(H, A_s)(\alpha) \le h_s(\alpha).
\]
\end{proof}

   With a view towards the forcing construction of Section \ref{pforcing} we define 
  the notion of {\em lower part}.

\begin{definition}
   A {\em lower part} is a finite sequence
\[
     (p_0, \kappa_1 , p_1, \ldots, \kappa_k, p_k)
\]
   such that:  
\begin{enumerate}
\item $k \ge 0$.  
\item $\kappa_i$ is an inaccessible cardinal less than $\kappa$ for $1 \le i \le k$, and
 $\kappa_1 < \kappa_2 < \ldots < \kappa_k$.
\item With the convention that $\kappa_0 = 0$ and $\kappa_{k+1} = \kappa$, 
    $p_i \in {\mathbb C}(\kappa_i, \kappa_{i+1})$ for $0 \le i \le k$. 
\end{enumerate}
\end{definition}

   Given lower parts $s$ and $t$ with
\[
      s = (p_0, \kappa_1, \ldots, \kappa_{i-1}, p_{i-1}),
\]
   we say that $t \le s$ if and only if 
\[
     t = (q_0, \kappa_1, \ldots, \kappa_{i-1}, q_{i-1})
\]
    and $q_i \le p_i$ for all $i$.

\begin{definition} A set $X$ of lower parts is 
   {\em downwards closed} if and only if for all $s \in X$ and all $t \le s$ we have $t \in X$.  
\end{definition}

    Now let us fix $H$ a $U$-constraint such that $\Fil(H)$ is an ultrafilter.   

\begin{definition}  $h$ is an {\em upper part} if and only if $h$ is
  a $U_0$-constraint such that $[h] \in \Fil(H)$. 
\end{definition} 

   The fact that $\Fil(H)$ is maximal is at the heart of the following
   crucial lemma. 

\begin{lemma}[Capturing Lemma] \label{capturinglemma} 
   Let $X$ be a downwards closed set of lower parts and let $h$ be an upper part.
   Then there exists an upper part $h^+ \le h$  such that
\begin{enumerate}
\item For all $\alpha, \beta \in \dom(h^+)$ with $\alpha < \beta$,
    $h^+(\alpha) \in {\mathbb C}(\alpha, \beta)$. 
\item For all lower parts $s$, exactly one of the two following statements holds:
\begin{enumerate}
\item For all $\alpha \in \dom(h^+)$ such that $s \in V_\alpha$, there exists
   $q \le h^+(\alpha)$ such that $s^\frown (\alpha, q) \in X$.
\item For all $\alpha \in \dom(h^+)$ such that $s \in V_\alpha$, there does not exist
   $q \le h^+(\alpha)$ such that $s^\frown (\alpha, q) \in X$.
\end{enumerate}
\item For all lower parts $s$, and all $\alpha, \beta \in \dom(h^+)$ such that $s \in V_\alpha$
    with $\alpha < \beta$, IF there is $q \le h^+(\alpha)$
    such that $s^\frown (\alpha, q) \in X$ THEN
\[
   \{ q \in {\mathbb C}(\alpha, \beta) : s^\frown (\alpha, q) \in X \}
\]
  is dense below $h^+(\alpha)$ in ${\mathbb C}(\alpha, \beta)$.  
\end{enumerate}
\end{lemma}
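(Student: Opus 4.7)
The plan is to refine $h$ in three passes: decide (2) pointwise using the ultrafilter property of $\Fil(H)$, synthesise the pointwise decisions via the Normality Lemma, and then thin the domain of the result with a club to secure (1) and (3).

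For each lower part $s$, define a function
\[
\mathfrak{c}_s(\alpha) = \bigvee\bigl\{q \in \mathbb{C}(\alpha, \kappa) : q \le h(\alpha),\ s^\frown (\alpha, q) \in X\bigr\}
\]
on $\{\alpha \in \dom(h) : s \in V_\alpha\}$, the supremum computed in $\mathbb{B}(\alpha, \kappa)$ (empty supremum $=0$). Since $\Fil(H)$ is an ultrafilter and $[h]_{U_0} \in \Fil(H)$, exactly one of $[\mathfrak{c}_s]_{U_0}$ (Case P$_s$) and $\neg [\mathfrak{c}_s]_{U_0}$ (Case N$_s$) lies in it. Set $h_s = \mathfrak{c}_s$ in Case P$_s$ and $h_s = h \wedge \neg\mathfrak{c}_s$ in Case N$_s$; after restricting the domain to the $U_0$-large set on which the value is nonzero, $h_s$ is a $U_0$-constraint with $h_s \le h$ and $[h_s]_{U_0} \in \Fil(H)$. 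Applying the Normality Lemma (Lemma \ref{normalitylemma}) to $\{h_s : s \in I\}$, with $I$ the set of lower parts, yields an upper part $h' \le h$ with $h' \downharpoonleft s \le h_s$ for every $s \in I$. Condition (2) now follows: in Case P$_s$, $h'(\alpha) \le \mathfrak{c}_s(\alpha)$ is compatible with some $q$ in the defining set, so any common refinement $q' \in \mathbb{C}(\alpha, \kappa)$ of $h'(\alpha)$ and $q$ satisfies $s^\frown (\alpha, q') \in X$ by downwards closure of $X$; in Case N$_s$, $h'(\alpha) \wedge \mathfrak{c}_s(\alpha) = 0$ rules out every witness $q$.

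To handle (1) and (3), I exploit the direct-limit decomposition $\mathbb{B}(\alpha, \kappa) = \bigcup_{\gamma < \kappa} \mathbb{B}(\alpha, \gamma)$ together with the $\kappa$-chain condition of $\mathbb{C}(\alpha, \kappa)$. Pick $\gamma_0(\alpha) < \kappa$ with $h'(\alpha) \in \mathbb{B}(\alpha, \gamma_0(\alpha))$; for each $s \in V_\alpha \cap I$ in Case P$_s$ choose a maximal antichain $A_{s,\alpha} \subseteq \{q \le h(\alpha) : s^\frown (\alpha, q) \in X\}$ of size less than $\kappa$ with $\bigvee A_{s,\alpha} = \mathfrak{c}_s(\alpha)$, and pick $\gamma_s(\alpha) < \kappa$ with $A_{s,\alpha} \subseteq \mathbb{C}(\alpha, \gamma_s(\alpha))$. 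Let
\[
\gamma(\alpha) = \max\!\bigl(\gamma_0(\alpha),\, \sup\{\gamma_s(\alpha) : s \in V_\alpha \cap I\}\bigr);
\]
since $|V_\alpha| = \alpha < \kappa$ and $\kappa$ is regular, $\gamma(\alpha) < \kappa$. The set of closure points $C = \{\beta < \kappa : \forall \alpha < \beta\ \gamma(\alpha) < \beta\}$ is a club in $\kappa$, so $C \in U_0$, and $h^+ := h' \restriction (\dom(h') \cap C)$ remains an upper part below $h$. Condition (1) holds because for $\alpha < \beta$ in $\dom(h^+) \subseteq C$, $\gamma_0(\alpha) < \beta$ places $h^+(\alpha) \in \mathbb{B}(\alpha, \beta)$. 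For (3) in Case P$_s$: $A_{s,\alpha} \subseteq \mathbb{C}(\alpha, \beta)$ witnesses $\mathfrak{c}_s(\alpha)$ inside $\mathbb{B}(\alpha, \beta)$; any $r \le h^+(\alpha)$ in $\mathbb{C}(\alpha, \beta)$ is compatible with some $q \in A_{s,\alpha}$, and any refinement $r' \in \mathbb{C}(\alpha, \beta)$ of $r \wedge q$ satisfies $s^\frown (\alpha, r') \in X$ by downwards closure of $X$. In Case N$_s$ the hypothesis of (3) is vacuous by (2b).

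The only genuinely delicate step is (3): a priori the witnesses $q$ for $s^\frown (\alpha, q) \in X$ live in the full algebra $\mathbb{C}(\alpha, \kappa)$, yet density must be arranged in the strictly smaller $\mathbb{C}(\alpha, \beta)$. The chain condition of $\mathbb{C}(\alpha, \kappa)$ confines enough witnesses to a bounded fragment of the algebra, and a club thinning then pushes $\beta$ past that bound; everything else amounts to a standard diagonalisation combining the ultrafilter property of $\Fil(H)$ with the Normality Lemma.
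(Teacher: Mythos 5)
Your proof is correct, and it takes a genuinely different route from the paper's. The paper works at the level of $U$-constraints (``upstairs''): it first reduces $h$ to $b(H,A)$, then for each lower part $s$ uses the ultrafilter property of $U$ to fix $A^s \in U$ realising a dichotomy about whether $x \in A^s$ admits a witness $q \le H(x)$ with $s^\frown(\kappa_x,q) \in X$; it then refines $H$ to $H^s$ (choosing a witness $H^s(x)$ in advance in the positive case), invokes Lemma~\ref{maxlemma} to see $b(H^s,A^s)$ is still an upper part, and extracts the density claims from the explicit supremum structure of $b(H^s,A^s)(\alpha)$. You instead work entirely ``downstairs'' in $\mathbb{B}_0$: you bundle the witnesses below $h(\alpha)$ into a single Boolean value $\mathfrak{c}_s(\alpha)$, let the ultrafilter $\Fil(H)$ decide $[\mathfrak{c}_s]_{U_0}$ directly, and drive the density claim out of the observation that any maximal antichain of $\{q \le h(\alpha) : s^\frown(\alpha,q)\in X\}$ sups up to $\mathfrak{c}_s(\alpha)$. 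The two dichotomies are not literally the same (yours is ``$[\mathfrak{c}_s]_{U_0} \in \Fil(H)$''; the paper's is a $U$-measure-one statement about existence of witnesses below $H(x)$, linked to yours through Lemma~\ref{naturalitylemma} but a priori weaker in the positive case), though both suffice. Your version is a little slicker in that it never manipulates $U$-constraints $H^s$ nor invokes Lemma~\ref{maxlemma}; the paper's version is more concrete about where the witnesses come from. From the normality step on the two arguments coincide: diagonalise via Lemma~\ref{normalitylemma}, then thin along a club so that the $\kappa$-cc-supplied bounds on the antichains (your $\gamma_s(\alpha)$, the paper's $\beta^s(\alpha)$) are pushed below the next point of the domain. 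One cosmetic point: the Normality Lemma as stated yields only $h'\downharpoonleft s \le h_s$; to get $h' \le h$ literally you should either include $h$ itself in the family (indexed, say, by the trivial lower part) or first meet $h'$ with $h$.
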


\begin{proof}  Strengthening $h$ if necessary, we may assume that $h = b(H, A)$
   for some $A \in U$. Fix for the moment a lower part $s$, and let
\[
     A^s \subseteq \{ x \in \dom(H) \cap A : s \in V_{\kappa_x} \}
\]
    be such that $A^s \in U$ and one of the following statements holds:

\smallskip

\noindent (Case One) For all $x \in A_s$ there is $q \le H(x)$ such that
   $s^\frown (\kappa_x, q) \in X$.

\smallskip

\noindent (Case Two) For no $x \in A_s$  is there $q \le H(x)$ such that
   $s^\frown (\kappa_x, q) \in X$.

\smallskip

   We now choose  $H^s \le H$ such that $\dom(H^s) = A^s$, and if $s$ falls in Case One
   then $s^\frown (\kappa_x, H^s(x)) \in X$ for all $x \in A^s$, and then let
   $h^s = b(H^s, A^s)$. By Lemma \ref{maxlemma},  $\Fil(H^s) = \Fil(H)$
    and so $h^s$ is a legitimate upper part. 


\medskip

\noindent  Claim One: If there exist $\alpha \in \dom(h^s)$
 and $p \le h^s(\alpha)$ such that $s^\frown (\alpha, p) \in X$, then
\[
    \{ r \in {\mathbb C}(\alpha', \kappa) : s^\frown (\alpha', r) \in X \}
\]
   is dense below $h^s(\alpha')$ in ${\mathbb C}(\alpha', \kappa)$
   for all $\alpha' \in \dom(h^s)$. 

\smallskip

\noindent Proof of Claim One: Fix some $\alpha$ and  $p \le h^s(\alpha)$ with  $s^\frown (\alpha, p) \in X$,
  and recall that $h^s(\alpha) = \bigvee_{x \in A^s, \kappa_x = \alpha} H^s(x)$.
  It follows that there is $x \in A^s$ such that $\kappa_x = \alpha$ and $p$ is comparable with $H^s(x)$,
  and we may fix $p' \le p, H^s(x)$. 
  Since $X$ is downwards closed, $s^\frown (\alpha, p') \in X$. Since $x \in A^s$ and
  $p' \le H^s(x) \le H(x)$, $s$ falls in Case One above and so
  $s^\frown (\kappa_x, H^s(x)) \in X$ for all $x \in A_s$.
 
  Let $\alpha' \in \dom(h^s)$, let  $q \in {\mathbb C}(\alpha, \kappa)$ be arbitrary with 
  $q \le h^s(\alpha')$, and observe that arguing as above there is $x \in A^s$ such that $\kappa_x = \alpha'$
  and $q$ is comparable with $H^s(x)$; if we now choose
   $r \le q, H^s(x)$ then it follows from the downwards closedness
   of $X$ and the definition of $H^s$ in Case One that $s^\frown (\kappa_x, r) \in X$.
 
\medskip

\noindent Claim Two: For all $\alpha \in \dom(h^s)$, if there is $p \le h^s(\alpha)$
    with $s^\frown (\alpha, p) \in X$ then there is an inaccessible cardinal $\beta^s(\alpha) < \kappa$
   such that $h^s(\alpha) \in {\mathbb C}(\alpha, \beta^s(\alpha))$  and
\[
    \{ r \in {\mathbb C}(\alpha, \beta) : s^\frown (\alpha, r) \in X  \}
\]
    is dense below $h^s(\alpha)$ in ${\mathbb C}(\alpha, \beta)$
   for all $\beta \ge \beta^s(\alpha)$.

\smallskip

\noindent Proof of Claim Two:  By the preceding claim,
\[
    \{ r \in {\mathbb C}(\alpha, \kappa) : s^\frown (\alpha, r) \in X \}
\]
   is dense below $h^s(\alpha)$ in ${\mathbb C}(\alpha, \kappa)$, and since
   $X$ is downwards closed this set is open. 
 Choose a maximal antichain $A$ below $h^s(\alpha)$ consisting of points in this
  set, and then appeal to the $\kappa$-chain condition to find 
  $\beta^s(\alpha) < \kappa$ such that $A \subseteq {\mathbb C}(\alpha, \beta^s(\alpha))$.

\medskip
     
 Let 
\[
    B^s = \{ \beta : \forall \alpha < \beta \; \beta^s(\alpha) < \beta \}.
\]
   Since $U_0$ is normal, $B^s \in U_0$. 
 
    To finish the proof, use Lemma \ref{normalitylemma} to find an upper part $h^-$  such that 
   $h^- \downharpoonleft s  \le  h^s$ for all $s$, and let
\[
   B = \{ \beta: \mbox{$\forall s \in V_\beta \; \beta \in B^s$ and $\forall \alpha < \beta \; h^-(\alpha) \in V_\beta$} \}.
\]
   By normality $B \in U_0$, and so we may define an upper part $h^+ = h^- \restriction B$. 
\medskip

\noindent Claim Three: $h^+$ is as required. 

\smallskip

\noindent Proof of Claim Three:  It is immediate from the definitions that $h^+ \le h$, 
    and Clause 1) from the conclusion is satisfied. 

   Towards showing Clauses 2) and 3), suppose that   $\alpha \in \dom(h^+)$,
  $s \in V_\alpha$,  $q \le h^+(\alpha)$ and $s^\frown (\alpha, q) \in X$.
  By construction $h^+(\alpha) \le h^s(\alpha)$.

  By Claim One above,  
\[
    \{ r \in {\mathbb C}(\alpha', \kappa) : s^\frown (\alpha', r) \in X \}
\]
   is dense below $h^s(\alpha')$ in ${\mathbb C}(\alpha', \kappa)$
   for all $\alpha' \in \dom(h^s)$. So if $s \in V_{\alpha'}$ and $\alpha' \in \dom(h^+)$, 
   then since  $h^+(\alpha') \le h^s(\alpha')$  this same set is dense below $h^+(\alpha')$ and so Clause 2) is satisfied.

  By Claim Two above,
\[
    \{ r \in {\mathbb C}(\alpha, \beta) : s^\frown (\alpha, r) \in X  \}
\]
    is dense below $h^s(\alpha)$ 
   for all $\beta \ge \beta^s(\alpha)$.
  If $\beta \in \dom(h^+)$ with $\alpha < \beta$ then (since $\beta \in B$)
  we have that  $h^+(\alpha) \in V_\beta$ and also $\beta \in B^s$, so that
  $\beta^s(\alpha) < \beta$ and hence 
\[
    \{ r \in {\mathbb C}(\alpha, \beta) : s^\frown (\alpha, r) \in X  \}
\]
   is dense below $h^+(\alpha)$. 
  This shows that Clause 3) is satisfied.
\end{proof} 

\begin{definition} If $X$ is a downwards closed set of lower parts and $h^+$ is an upper part satisfying the
   conclusion of the Capturing Lemma then we say that {\em $h^+$ captures $X$}.
\end{definition}

\section{The forcing $\mathbb P$ and its properties}  \label{pforcing}

\subsection{The filter}  
   In the last section we used the $2^\kappa$ supercompactness of $\kappa$
  to show that there exists a $U$-constraint $H$ such that $\Fil(H)$ is an ultrafilter.
  We then established that if $\mathcal F$ is an ultrafilter of the
   form $\Fil(H)$ then $\mathcal F$ has three properties:

\begin{enumerate}

\item[I.]  \label{kappacompletenessproperty}  ($\kappa$-completeness) 
 Let $\eta < \kappa$ and
   let $\langle h_i : i < \eta \rangle$ be a sequence of upper parts. Then there exists
   an upper part $h$ such that  $h \le h_i$ for all $i$.

\item[II.] \label{normalityproperty} (normality) 
 Let $I$ be a set of lower parts   and let  $\langle h_s : s \in I \rangle$ be an $I$-indexed
   family of upper parts.
    Then there exists an upper part $h$ such that
    $h \downharpoonleft s \le h_s$ for all $s$.

\item[III.]   \label{capturingproperty} (capturing)
   Let $X$ be a downwards closed set of lower parts and let $h$ be an upper part.
   Then there exists an upper part $h^+ \le h$ such that
   $h^+$ captures $X$.
\end{enumerate}

\begin{remark} In III above, the last part implies immediately that  if there is $q \le h^+(\alpha)$
    such that $s^\frown (\alpha, q) \in X$ then
\[
   \{ q \in {\mathbb C}(\alpha, \kappa) : s^\frown (\alpha, q) \in X \}
\]
  is dense below $h^+(\alpha)$ in ${\mathbb C}(\alpha, \kappa)$. 
\end{remark}

   For the rest of this section we will weaken our assumptions on $\kappa$,
to be precise we will assume only that:
\begin{enumerate}
\item  $\kappa$ is measurable, and $U_0$ is a normal measure on $\kappa$, with
    associated ultrapower map $j_0 : V \longrightarrow M_0 = \Ult(V, U_0)$. 
\item $2^\kappa = \kappa^{+n}$ and $n < m < \omega$.
\item  $\mathcal F$ is an ultrafilter on
    ${\mathbb B}_0 = { {\mathbb C}(\kappa, j_o(\kappa))}^{M_0}$  with properties I-III.
\end{enumerate}

\subsection{The forcing} 
   We now fix a filter $\mathcal F$ satisfying properties I-III above,
   and use $\mathcal F$ to define a forcing poset $\mathbb P$. 
   Conditions in ${\mathbb P}$ are pairs $(s, h)$ such that:
\begin{enumerate}
\item $s$ is a lower part.
 \item   $h$ is an upper part.  
\end{enumerate}  

   When $p=(s, h)$ we will refer to $s$ as the {\em stem} or {\em lower part} of $p$,
   and to $h$ as the {\em upper part} of $p$.  

 Suppose that $p=(s, h)$ and $q = (s', h')$ are conditions where
$s = (p_0, \alpha_1 , p_1, \ldots, \alpha_k, p_k)$ and
$s' = (q_0, \beta_1 , q_1, \ldots, \beta_l, p_l)$. Then $q \le p$ 
 if and only if
\begin{enumerate}
\item  $\alpha_i = \beta_i$ and $q_i \le p_i$ for $1 \le i \le k$.
\item  $\beta_i \in \dom(h)$ and $q_i \le h(\beta_i)$ for
       $k < i \le l$. 
\item  $h' \le h$.
\end{enumerate}
   $q$ is a {\em direct extension of $p$} if $q \le p$ and in addition $k =l$. 
   We write $q \le^* p$ in this case. 

The generic object for $\mathbb P$ is a sequence
\[
    f_0, \kappa_1, f_1, \kappa_2, f_2 \ldots 
\]
  where the $\kappa_i$ form an increasing and cofinal $\omega$-sequence of inaccessible cardinals less than $\kappa$ 
  (which will be generic for the Prikry forcing defined from $U_0$),   $f_i$ is ${\mathbb C}(\omega, \kappa_1)$-generic and
  $f_i$ is ${\mathbb C}(\kappa_i, \kappa_{i+1})$-generic for 
  $i > 0$. The condition $(s, h)$ where
  $s = (p_0, \alpha_1 , p_1, \ldots, \alpha_k, p_k)$ carries the information
  that
\begin{enumerate}
\item  $\kappa_i = \alpha_i$ and $p_i \in f_i$ for $1 \le i \le k$.
\item  $\kappa_i \in \dom(h)$ and $h(\kappa_i) \in f_i$ for $i > k$.
\end{enumerate}

\begin{lemma} \label{Pcclemma} The forcing poset $\mathbb P$ has the $\kappa^+$-cc.
\end{lemma}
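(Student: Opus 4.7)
The plan is the standard Prikry-style argument: show that any two conditions sharing the same lower part are compatible, and then observe that there are only $\kappa$ many possible lower parts in all.

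First I would verify the compatibility claim. Given two conditions $(s, h_1)$ and $(s, h_2)$ with the same stem $s$, apply property I ($\kappa$-completeness of $\mathcal{F}$) to the sequence $\langle h_1, h_2 \rangle$ to obtain an upper part $h_3$ with $h_3 \le h_1$ and $h_3 \le h_2$. Since $s$ has the same length in all three conditions, clause (1) of the ordering is trivially satisfied (each coordinate is compared with itself) and clause (2) is vacuous; clause (3) reduces exactly to $h_3 \le h_i$. Hence $(s, h_3)$ is a common refinement of $(s, h_1)$ and $(s, h_2)$. Consequently, in any antichain the stems must be pairwise distinct.

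Second I would count the number of lower parts. A lower part has the form $(p_0, \kappa_1, p_1, \ldots, \kappa_k, p_k)$ with $k < \omega$, an increasing tuple $\kappa_1 < \ldots < \kappa_k$ of inaccessibles below $\kappa$, and $p_i \in \mathbb{C}(\kappa_i, \kappa_{i+1})$ (with the convention $\kappa_0 = 0$ and $\kappa_{k+1} = \kappa$). For $i < k$ the poset $\mathbb{C}(\kappa_i, \kappa_{i+1}) = \coll(\kappa_i^{+m}, <\kappa_{i+1})$ has cardinality strictly less than $\kappa_{i+1}$, hence less than $\kappa$; for $i = k$ the poset $\mathbb{C}(\kappa_k, \kappa)$ has cardinality $\kappa$, since $\kappa$ is inaccessible and conditions have size less than $\kappa_k^{+m} < \kappa$. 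Thus the number of lower parts of fixed length $k$ is bounded by $\kappa^{k+1} = \kappa$, and summing over $k < \omega$ yields at most $\kappa$ lower parts.

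Combining the two observations, every antichain in $\mathbb{P}$ has cardinality at most $\kappa$, so $\mathbb{P}$ has the $\kappa^+$-cc. There is really no obstacle in this argument: all of the work has already been invested in establishing property I for $\mathcal{F}$ in the previous section, and the chain condition is a direct consequence together with a routine cardinal count.
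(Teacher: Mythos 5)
Your proof is correct and follows essentially the same route as the paper: both arguments hinge on the observation that two conditions sharing a stem are compatible (the paper derives the required pointwise lower bound for the upper parts from the filter meet property followed by a restriction to a $U_0$-large set, while you invoke Property I directly, which is the same finite-intersection fact), and then implicitly or explicitly count the $\kappa$ many possible lower parts. One small inaccuracy worth noting, though it does not affect the conclusion: $\coll(\kappa_i^{+m}, <\kappa_{i+1})$ has cardinality exactly $\kappa_{i+1}$ (not strictly less), since $\kappa_{i+1}$ is inaccessible; the point is only that this is $<\kappa$.
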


\begin{proof} Let $(s, h)$ and $(s, h')$ be two conditions with the same stem $s$.
  Since $[h]_{U_0}, [h']_{U_0} \in {\mathcal F}$ and ${\mathcal F}$ is a filter,
   it is easy to find $h''$ such that $[h'']_{U_0} \le [h]_{U_0}, [h']_{U_0}$.
   $h'' \le_{U_0} h, h'$, and if we let
   $B = \{ \alpha : \mbox{$h''(\alpha) \le h(\alpha)$ and  $h''(\alpha) \le h'(\alpha)$} \}$
   then $h'' \restriction B \le h, h'$. The condition $(s, h'' \restriction B)$ is clearly
   a lower bound for $(s, h)$ and $(s, h')$.
\end{proof}   

   The following Lemma is straightforward.

\begin{lemma} \label{factorlemma} Let $p = (s, h)$ where $s = (p_0, \alpha_1 , p_1, \ldots, \alpha_k, p_k)$,
   and let $1 \le i \le k$. Then the forcing poset ${\mathbb P} \downarrow p$ is isomorphic to
\[
    {\mathbb D} \times ({\mathbb P}' \downarrow (t, h)),
\]
   where 
\[
    {\mathbb D} = {\mathbb C}(\omega, \alpha_1) \downarrow p_0 \times \ldots \times {\mathbb C}(\alpha_{i-1}, \alpha_i) \downarrow p_{i-1},
\]
    ${\mathbb P'}$ is defined just like $\mathbb P$ except that $\alpha_i$ plays the role of $\omega$,
and 
\[
    t = (p_i, \alpha_{i+1} , \ldots, \alpha_k, p_k).
\]
\end{lemma}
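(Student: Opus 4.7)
The plan is to write down an explicit factoring isomorphism $\Phi: \mathbb{P} \downarrow p \to \mathbb{D} \times (\mathbb{P}' \downarrow (t, h))$. Given any $q = (s', h') \le p$, clause (1) of the definition of the order on $\mathbb{P}$ forces the stem $s'$ to have the shape
\[
s' = (q_0, \alpha_1, q_1, \ldots, \alpha_k, q_k, \beta_{k+1}, q_{k+1}, \ldots, \beta_l, q_l)
\]
with $q_j \le p_j$ for $0 \le j \le k$, together with $h' \le h$ and the remaining conditions on the higher coordinates given by clauses (2) and (3). Define
\[
\Phi(q) = \bigl((q_0, \ldots, q_{i-1}),\, ((q_i, \alpha_{i+1}, q_{i+1}, \ldots, \beta_l, q_l), h')\bigr).
\]
The first component lies in $\mathbb{D}$ below $(p_0, \ldots, p_{i-1})$ because $q_j \le p_j \in \mathbb{C}(\alpha_{j-1}, \alpha_j)$ for each $j < i$. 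The second component is a genuine condition of $\mathbb{P}'$ below $(t, h)$: its stem starts at $\alpha_i$, which is the distinguished bottom cardinal of $\mathbb{P}'$; its first $k-i+1$ collapse coordinates refine those of $t$; and the upper part $h'$ remains an upper part because upper parts are defined purely in terms of $U_0$ and $\mathcal{F}$, independently of the lower cardinal. The inverse $\Psi$ is the obvious concatenation map, and direct inspection shows $\Phi$ and $\Psi$ are mutually inverse.

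For order preservation, the $\mathbb{P}$-ordering compares two conditions clause by clause on each collapse coordinate and makes a single comparison on upper parts. Each collapse coordinate belongs unambiguously either to the $\mathbb{D}$-factor (indices $<i$) or to the $\mathbb{P}'$-factor (indices $\ge i$), and the upper-part comparison $h' \le h$ sits entirely in the $\mathbb{P}'$-factor. Consequently, the three clauses defining $q \le q'$ in $\mathbb{P}$ split cleanly, with no interaction between the two halves, into the pair of corresponding clauses defining the product ordering on $\mathbb{D} \times (\mathbb{P}' \downarrow (t, h))$. No step in this argument poses any genuine obstacle; the whole proof is essentially bookkeeping, made possible by the observation that every extension of $p$ retains the fixed Prikry points $\alpha_1, \ldots, \alpha_k$ of $s$, which is precisely what makes the split at $\alpha_i$ clean.
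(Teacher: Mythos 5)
Your proposal is correct and gives precisely the explicit splitting isomorphism that the paper has in mind (the paper simply records the lemma as ``straightforward'' and gives no proof). The observation that the three clauses of the ordering on $\mathbb P$ partition cleanly between the $\mathbb D$ and $\mathbb P'$ factors, with the Prikry points $\alpha_1,\ldots,\alpha_k$ frozen below $p$, is exactly the content one would want to spell out.
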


\subsection{The Prikry Lemma}

\begin{lemma}[Prikry Lemma for $\mathbb P$] Let $\Phi$ be a sentence in the forcing language and
   let $p \in {\mathbb P}$, then there is a direct extension $q \le p$ which decides $\Phi$. 
\end{lemma}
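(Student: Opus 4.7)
The plan is to adapt the standard proof of the Prikry property for Prikry-style forcings, with the Capturing Lemma (property III) as the main engine and normality (II) plus $\kappa$-completeness (I) doing the bookkeeping.

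First, I introduce for $\epsilon \in \{+,-\}$ the downwards closed sets of lower parts
\[
   X^\epsilon = \{ t : (t,h') \Vdash \Phi^\epsilon \text{ for some upper part } h' \},
\]
where $\Phi^+ := \Phi$ and $\Phi^- := \neg\Phi$. Downward closure holds because strengthening stem collapses yields a stronger condition without touching the upper part. Since lower parts live in $V_\kappa$, for each $t \in X^\epsilon$ I pick a witness $h^t$ and apply the Normality Lemma \ref{normalitylemma} to produce a single upper part $h_0^\epsilon \le h$ with $(t, h_0^\epsilon \downharpoonleft t) \Vdash \Phi^\epsilon$ for every $t \in X^\epsilon$. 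Using $\kappa$-completeness (Lemma \ref{kappacomplete}) I combine these into a single uniformizing $h_1 \le h$. Then I apply the Capturing Lemma in succession---first to $X^+$ starting from $h_1$, then to $X^-$---to obtain $h^* \le h_1$ capturing both sets; a brief verification using clauses 1 and 3 of the Capturing Lemma shows that capturing of $X^+$ survives the second direct extension.

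Next, I argue that some direct extension of $(s, h^*)$ decides $\Phi$. Suppose not: by density, pick $(t, h'') \le (s, h^*)$ deciding $\Phi$ with $n := |t|-|s|$ minimal; then $n \ge 1$, and without loss of generality $(t, h'') \Vdash \Phi$, so $t \in X^+$. Writing $t = t_{n-1}^\frown (\alpha_{k+n}, q_{k+n})$, capturing of $X^+$ at base $t_{n-1}$ cannot fall in case~(b) (which would contradict $t \in X^+$), so case~(a) holds there. The critical step is then to combine the density from clause~3 with the uniformization encoded in $h_1$ to conclude $(t_{n-1}, h^* \downharpoonleft t_{n-1}) \Vdash \Phi$: in any generic $G$ through this condition, the next Prikry step produces a stem $t_{n-1}^\frown (\alpha, q) \in X^+$, together with the corresponding $h_1$-derived condition in $G$ that forces $\Phi$. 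This yields $t_{n-1} \in X^+$ and a shorter witness deciding $\Phi$, contradicting the minimality of $n$.

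The main obstacle is exactly this lifting step: matching the density supplied by the Capturing Lemma with the uniform upper parts produced via normality, with due attention to the collapse algebras ${\mathbb C}(\alpha,\beta)$ indexed by successive Prikry points and to the various domain restrictions encoded by $\downharpoonleft$.
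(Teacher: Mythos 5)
Your proof follows the same architecture as the paper's: uniformize deciding witnesses along lower parts using normality, define the downward-closed sets $X^\pm$, obtain an upper part capturing both via two applications of Property III, and then take a deciding extension of minimal stem length and derive a contradiction from the density supplied by the Capturing Lemma. The minor cosmetic differences (you run normality separately for $X^+$ and $X^-$ and then merge via $\kappa$-completeness, where the paper does a single normality step over all deciding stems; your $X^\pm$ are defined existentially over upper parts rather than relative to the uniformizing $h^0$) wash out under the uniformization and do not change the argument. Your observation that capturing of $X^+$ persists after the second direct extension is correct and a point the paper leaves tacit; note that verifying the dichotomy clause 2 for the shrunken upper part actually requires clause 3 (density), not just clauses 1 and 3 in isolation, but that is a bookkeeping remark, not a gap.

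The one place to be careful is the final ``lifting'' step. You phrase it semantically --- ``in any generic $G$ through $(t_{n-1}, h^*\downharpoonleft t_{n-1})$ the next Prikry step lands in $X^+$ and there is a corresponding $h_1$-derived condition in $G$ forcing $\Phi$'' --- which implicitly appeals to the shape of the generic object for $\mathbb P$ (the decomposition into a Prikry sequence interleaved with collapse generics). That shape is, strictly speaking, established later in the paper; the paper's own proof avoids the circularity by arguing purely combinatorially that conditions forcing $\Phi$ are dense below $(t^-, h^{**})$: given any extension with properly longer stem, use clause 3 to strengthen the first new collapse coordinate $q_0$ within ${\mathbb C}(\gamma_0,\gamma_1)$ so that the shortened stem lands in $X^+$, and observe the resulting condition refines $(t^{-\frown}(\gamma_0, r), h^0)$. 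Your semantic statement is morally the same thing, but writing it as a density argument as the paper does is cleaner and avoids any appearance of relying on the Genericity Lemma.
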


\begin{proof}  We begin the proof with a construction that is done uniformly for all
   conditions $p$.

 For each lower part $t$, if there is an upper part $h$ such that $(t, h)$ decides
    $\Phi$ then we fix such an upper part $h_t$. Appealing to Property II for $\mathcal F$,
     we find
    $h^0 \le h$ such that $h^0 \downharpoonleft t \le h^t$ for all relevant $t$. So
    for every $t$, if there exists any $h$ such that $(t, h)$ decides $\Phi$ then
    $(t, h^0)$ decides $\Phi$.

   We now define two sets of lower parts:
\[
   X^+ = \{ t : (t, h^0) \forces \Phi \},
\]
   and 
\[
   X^- = \{ t : (t, h^0) \forces \neg \Phi \}.
\]
   It is clear that both $X^+$ and $X^-$ are downwards closed. By two appeals to Property III we obtain
   $h^1 \le h^0$ such that $h^1$ captures both $X^+$ and $X^-$.

   Now let $p = (s, h)$.  As in the proof of Lemma \ref{Pcclemma}, we may find an upper
   part $h^*$ such that $h^* \le h, h^1$. Let $(t, h^{**}) \le (s, h^*)$ be a condition
   deciding $\Phi$, with $lh(t)$ chosen minimal among all such extensions
   of $(s, h^*)$.  We will show that $lh(t) = lh(s)$, establishing that
   $(t, h^{**})$ is a direct extension of $(s, h)$ and thereby proving the Lemma. 

   We will assume that $(t, h^{**}) \forces \Phi$, the proof in
   the case when it forces $\neg \Phi$ is the same. 
   Suppose for a contradiction that $lh(t) > lh(s)$, and let $t$ be the concatenation of 
   a shorter lower part $t^-$ and a pair $(\alpha, q)$. 
   Since $t$ is longer than $s$, we have that $\alpha \in \dom(h^*)$
   and $q \le h^*(\alpha) \le h^1(\alpha)$. 
   By the construction of $h^0$  we have also that $(t,  h^0) \forces \Phi$,
   so that $t \in X^+$. 

   We claim that $(t^-, h^{**}) \forces \Phi$, which will contradict the hypothesis
   that $lh(t)$ was chosen minimal and establish the Lemma. 

   Towards the claim we  observe that,  since $h^1$ captures $X^+$ and
   $q \le h^1(\alpha)$,  for every $\beta, \gamma \in \dom(h^1 \downharpoonleft t^-)$ with $\beta < \gamma$
   the set $\{ r : {t^-}^\frown (\beta, r) \in X^+ \}$ is dense
   below $h^1(\beta)$ in ${\mathbb C}(\beta, \gamma)$. We will use this to show that the set of conditions which
   force $\Phi$ is dense below $(t^-, h^{**})$ in $\mathbb P$, establishing the claim that
   $(t^-, h^{**}) \forces \Phi$.

   It will suffice to show that any extension of $(t^-, h^{**})$
   with a properly longer lower part can be extended to force $\Phi$.
   Consider such an extension of the form
   $({t'}^\frown (\gamma_0, q_0)^\frown \ldots {}^\frown (\gamma_i, q_i), h^{***})$,
   where $t' \le t^-$ and without loss of generality $i > 0$.  Since $q_0 \le h^1(\gamma_0)$ and
   $q_0 \in {\mathcal C}(\gamma_0, \gamma_1)$, by the remarks in the
   preceding paragraph there is $r \le q_0$ with
   $r \in {\mathcal C}(\gamma_0, \gamma_1)$ such that 
   $({t^-}^\frown (\gamma_0, r), h^0) \forces \Phi$.

   It is now easy to verify that by strengthening $q_0$ to $r$ we obtain a condition
      $({t'}^\frown (\gamma_0, r)^\frown \ldots {}^\frown (\gamma_i, q_i), h^{***})$
   which extends $({t^-}^\frown (\gamma_0, q), h^0)$, and so forces $\Phi$.
   This concludes the proof.
\end{proof} 

\begin{remark}  The proof of the Prikry Lemma extends without any change to the forcing poset
   ${\mathbb P}'$ defined in Lemma \ref{factorlemma}.
\end{remark} 

\subsection{Analysing names for bounded subsets of $\kappa$}

  It is clear that the forcing poset $\mathbb P$ collapses all cardinals in the
  open intervals $(\omega^{+m}, \kappa_1)$ and $(\kappa_i^{m}, \kappa_{i+1})$
  for $i > 0$. One of the main applications of the Prikry Lemma is to show that
  no other cardinals are collapsed, so that $\kappa$ becomes $\aleph_\omega$ in the
  generic extension. 

\begin{lemma} \label{bddsetlemma} 
  Let $G$ be $\mathbb P$-generic and let 
\[
    f_0, \kappa_1, f_1, \kappa_2, f_2 \ldots 
\]
   be the generic sequence added by $G$. Let $x \in V[G]$ be a bounded
   subset of $(\kappa_i^{+m})^V$ for some $i > 0$. Then
 $x \in V[f_0  \times \ldots \times f_{i-1}]$. 
\end{lemma}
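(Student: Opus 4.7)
The plan is to apply Lemma \ref{factorlemma} to factor the forcing below a suitable $p \in G$ as $\mathbb{D} \times (\mathbb{P}' \downarrow (t, h))$, and then to argue that the tail forcing $\mathbb{P}'$ does not add bounded subsets of $(\kappa_i^{+m})^V$ to $V[f_0 \times \cdots \times f_{i-1}]$.

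First, by density in $G$, I would fix $p = (s, h) \in G$ whose stem has length $k \ge i$, and write $s = s_0 \frown t$ with $s_0$ the initial segment ending at $(\alpha_{i-1}, p_{i-1})$. Lemma \ref{factorlemma} gives $\mathbb{P} \downarrow p \cong \mathbb{D} \times (\mathbb{P}' \downarrow (t, h))$, where $\mathbb{D}$ is the product of the initial collapses. Decomposing $G$ as $G_0 \times G'$, one has $V[G_0] = V[f_0 \times \cdots \times f_{i-1}]$ and $\dot x^G = \dot y^{G'}$ for $\dot y$ the $\mathbb{P}'$-name over $V[G_0]$ that reinterprets $\dot x$.

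Next, working in $V[G_0]$, I fix a bound $\delta < (\kappa_i^{+m})^V = \alpha_i^{+m} < \kappa$ for $\dot y$. By the Prikry Lemma for $\mathbb{P}'$ (guaranteed by the Remark after the Prikry Lemma), for each $\eta < \delta$ there is a direct extension $(t, h_\eta) \le^*_{\mathbb{P}'} (t, h)$ deciding $\check\eta \in \dot y$. Combining these $\delta < \kappa$ upper parts via Property I ($\kappa$-completeness of $\mathcal F$) yields $(t, h^*) \le^*_{\mathbb{P}'} (t, h_\eta)$ for every $\eta$; then $(t, h^*)$ decides $\check\eta \in \dot y$ for all $\eta < \delta$ and forces $\dot y = \check z$, where $z = \{\eta < \delta : (t, h^*) \forces \check\eta \in \dot y\} \in V[G_0]$.

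The hard part will be to verify that $\dot y^{G'} = z$, since $(t, h^*) \in G'$ is not automatic: a condition in $G'$ can have stem entries in $\dom(h) \setminus \dom(h^*)$ and so be incompatible with $(t, h^*)$. To handle this I would follow the minimality-of-stem reasoning from the proof of the Prikry Lemma, arranging the construction of $h^*$ via Property III (capturing) so that the decision at $(t, h^*)$ about each $\check\eta \in \dot y$ is uniformly inherited by every stem-extension of $(t, h^*)$ whose new cardinals lie in $\dom(h^*) \in U_0$. Since by genericity the Prikry sequence beyond $t$ is cofinally contained in $\dom(h^*)$, a density argument then shows that every condition in $G'$ has a stronger condition forcing $\dot y = \check z$, which yields $\dot y^{G'} = z$ and thus $x = \dot x^G \in V[f_0 \times \cdots \times f_{i-1}]$.
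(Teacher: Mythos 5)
Your approach is in the right neighborhood but takes the decomposition in the opposite order from the paper, and this creates difficulties the paper avoids. The paper writes $V[G]$ as $V[G'][g]$, forcing with $\mathbb{P}'$ \emph{first} over $V$ and viewing $\dot x$ as a $\mathbb{P}'$-name for a $\mathbb{D}$-name (for a subset of some $\gamma < \kappa_i^{+m}$). The Prikry Lemma for $\mathbb{P}'$ over $V$, together with the $\kappa_i^{+m}$-closure of $\le^*$ for $\mathbb{P}'$, then shows that densely below the $\mathbb{P}'$-part of a condition one can decide the statement ``$\check d \forces_{\mathbb{D}} \check\eta \in \dot x$'' for all $(d, \eta) \in \mathbb{D} \times \gamma$ simultaneously, so that the $\mathbb{D}$-name is already determined in $V$; choosing $p \in G$ in this dense set makes the $\mathbb{P}'$-part of $p$ automatically a member of $G'$, and the verification issue you worry about never arises.

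Your version instead forces with $\mathbb{D}$ first and tries to decide the $\mathbb{P}'$-name $\dot y$ over $V[G_0]$. This requires the Prikry Lemma for $\mathbb{P}'$ to hold \emph{over $V[G_0]$}, and requires Property I ($\kappa$-completeness of $\mathcal F$) to apply to $V[G_0]$-sequences of upper parts; neither is established in the paper (the Remark after the Prikry Lemma only speaks of $\mathbb{P}'$ over $V$). Both facts are true and can be deduced from the $V$-versions using the $\kappa_i$-cc and small size of $\mathbb{D}$, but you do not supply or even flag that argument. The issue you \emph{do} flag --- that $(t, h^*) \in G'$ is not automatic --- is a genuine gap in your set-up, and the sketch you give (re-running Property III in the style of the Prikry Lemma proof, again over $V[G_0]$) amounts to re-deriving a substantial chunk of the Prikry machinery rather than appealing to it. All of this complication disappears if you factor the other way round as the paper does: the decision of the $\mathbb{D}$-name happens \emph{before} $\mathbb{D}$ is forced with, entirely inside $V$, and density takes care of membership in $G'$.
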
  

\begin{proof} Working below a suitable condition, we may use Lemma \ref{factorlemma} to view 
   $V[G]$ as a two-step extension $V[G'][g]$ where
   $g =  f_0  \times \ldots \times f_{i-1}$ and $G'$ is generic for ${\mathbb P}'$,
   a version of ${\mathbb P}$ in which $\kappa_i$ plays the role of $\omega$. 

   Let $x = i_G(\dot x)$, where $\dot x$ is a $\mathbb P$-name for a subset  
   of $\gamma$ for some $\gamma < \kappa_i^{+m}$.  We may view $\dot x$ as 
   a ${\mathbb P}'$-name for a $\mathbb D$-name for a subset of $\gamma$, where
   ${\mathbb D} = {\mathbb C}(\omega, \kappa_1) \times \ldots \times {\mathbb C}(\kappa_{i-1}, \kappa_i)$. 
   
   Since $\mathbb P'$ satisfies the Prikry Lemma, it is easy to see that
   the $\mathbb D$-name denoted by $\dot x$ lies in $V$, so that
   $x \in V[f_0 \times \ldots \times f_{i-1}]$ as required.
\end{proof} 

   Using Lemma \ref{bddsetlemma}, standard chain condition and closure arguments imply that
   only the cardinals in the intervals  $(\omega^{+m}, \kappa_1)$ and $(\kappa_i^{m}, \kappa_{i+1})$
   are collapsed by $\mathbb P$. For the purposes of some later arguments, we
   will prove a more refined version of Lemma \ref{bddsetlemma}.
   The point at stake here is that {\it a priori} it seems that a name for a bounded subset
   of $\kappa$ may depend on an arbitrarily large initial segment of the generic object,
   and this would cause major difficulties in the chain condition arguments of Section \ref{qsection}.

   Given an increasing sequence $\vec \alpha = \langle \alpha_1, \ldots, \alpha_k \rangle$ of inaccessible cardinals
   less than $\kappa$, we define
\[
    {\mathbb D}(\vec \alpha) = {\mathbb C}(\omega, \alpha_1) \times \ldots \times {\mathbb C}(\alpha_{k-1}, \alpha_k).
\]

\begin{lemma} \label{canonize} 
   Let $\mu, \eta < \kappa$ and let $\dot x$ be a $\mathbb P$-name for a subset
   of $\mu$. Let $h$ be an upper part and let
   $S$ be the set of increasing sequences $\langle \alpha_1, \ldots, \alpha_k \rangle$
   where $\alpha_i < \eta$ and $\alpha_i$ is inaccessible. 

   Then there exist an ordinal $\beta$ with  $\mu, \eta < \beta < \kappa$, names $\langle \dot y_{\vec \alpha} : \vec \alpha \in S \rangle$
  and an upper part $h' \le h$ with $\min(\dom(h')) > \beta$ such that 
  for every $\vec \alpha = (\alpha_1, \ldots, \alpha_k) \in S$:
\begin{enumerate}
\item  $\dot y_{\vec \alpha}$ is a ${\mathbb D}({\vec \alpha}^\frown \beta)$ name for a subset of $\mu$.
\item  If $t = (\emptyset , \alpha_1 ,\emptyset, \ldots, \alpha_k, \emptyset)$ then
    $(t, h') \forces \dot x = \dot y_{\vec \alpha}$. That is to say that  if $G$ is $\mathbb P$-generic
  with $(t, h') \in G$, and
\[
    f_0, \alpha_1, f_1, \alpha_2, f_2 \ldots 
\]
  is the corresponding generic sequence, then
$i_G(\dot x) = i_f(\dot y_{\vec \alpha})$, where
  $f = f_0 \times \ldots \times f_{k-1} \times (f_k \restriction \beta)$. 
\end{enumerate}    
\end{lemma}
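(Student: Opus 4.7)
The plan is to first canonize the Prikry decisions into a single uniform upper part, then use a closure argument to pick $\beta$ large enough to absorb the supports of all witnessing stem entries, and finally read off $\dot y_{\vec\alpha}$ from the resulting dense family of deciders. For the preliminary step, fix $\xi<\mu$ and apply the uniformization in the proof of the Prikry Lemma (which invokes Property II of $\mathcal F$) to get an upper part $h^0_\xi \le h$ such that, for every lower part $t'$, if some $(t',h'')$ decides ``$\check\xi \in \dot x$'' then so does $(t',h^0_\xi)$. Since $\mu<\kappa$, Property I yields a common lower bound $h^0 \le h^0_\xi$ for all $\xi<\mu$, so that $(t',h^0)$ decides ``$\check\xi\in\dot x$'' whenever any $(t',h'')$ does, uniformly in $t'$ and $\xi$.

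Write $t_{\vec\alpha,d}$ for the stem obtained from $t_{\vec\alpha}$ by inserting $d\in{\mathbb D}({\vec\alpha}^\frown\gamma)$ into its collapse slots. For each $\vec\alpha\in S$, $\xi<\mu$, $\gamma<\kappa$ and $d\in{\mathbb D}({\vec\alpha}^\frown\gamma)$, the Prikry Lemma applied to $(t_{\vec\alpha,d},h^0)$ yields a direct extension whose $k$-th stem entry lies in ${\mathbb C}(\alpha_k,\gamma_{\vec\alpha,d,\xi})$ for some $\gamma_{\vec\alpha,d,\xi}<\kappa$, and I define the support function $F(\gamma)=\sup\{\gamma_{\vec\alpha,d,\xi}+1:\vec\alpha\in S,\,d\in{\mathbb D}({\vec\alpha}^\frown\gamma),\,\xi<\mu\}$. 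Because $|S|\cdot|{\mathbb D}({\vec\alpha}^\frown\gamma)|\cdot\mu<\kappa$ and $\kappa$ is regular, $F(\gamma)<\kappa$. The set of ordinals $<\kappa$ closed under $F$ is a club; since $\kappa$ is measurable, hence Mahlo, the inaccessibles below $\kappa$ are stationary, so I can pick an inaccessible $\beta$ with $\mu,\eta<\beta<\kappa$ closed under $F$.

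Set $D^+_{\vec\alpha,\xi}=\{q\in{\mathbb D}({\vec\alpha}^\frown\beta):(t_{\vec\alpha,q},h^0)\Vdash\check\xi\in\dot x\}$ and $D^-_{\vec\alpha,\xi}$ analogously; define $\dot y_{\vec\alpha}=\{(\check\xi,q):\xi<\mu,\,q\in D^+_{\vec\alpha,\xi}\}$ and $h'=h^0\restriction\{\alpha\in\dom(h^0):\alpha>\beta\}$, an upper part with $h'\le h^0$ and $\min\dom(h')>\beta$. Given $d\in{\mathbb D}({\vec\alpha}^\frown\beta)$ with last coordinate in ${\mathbb C}(\alpha_k,\gamma)$ for some $\gamma<\beta$, the preceding paragraph (applied at level $\gamma$) together with the uniformization produces $q\le d$ in ${\mathbb D}({\vec\alpha}^\frown F(\gamma))\subseteq{\mathbb D}({\vec\alpha}^\frown\beta)$ with $(t_{\vec\alpha,q},h^0)$ deciding ``$\check\xi\in\dot x$''; thus $D^+_{\vec\alpha,\xi}\cup D^-_{\vec\alpha,\xi}$ is dense in ${\mathbb D}({\vec\alpha}^\frown\beta)$. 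Finally, if $G$ is $\mathbb{P}$-generic with $(t_{\vec\alpha},h')\in G$ and $f=f_0\times\cdots\times f_{k-1}\times(f_k\restriction\beta)$, then $\min\dom(h')>\beta$ forces $\alpha_{k+1}>\beta$, so $f$ is ${\mathbb D}({\vec\alpha}^\frown\beta)$-generic over $V$; density and the mutual incompatibility of positive and negative decisions guarantee that $f$ meets exactly one of $D^\pm_{\vec\alpha,\xi}$ for each $\xi$, and since $h'\le h^0$ the decision of any witness $q\in f\cap D^\pm_{\vec\alpha,\xi}$ is realized in $G$. This yields $\xi\in i_G(\dot x)\iff f\cap D^+_{\vec\alpha,\xi}\ne\emptyset\iff\xi\in i_f(\dot y_{\vec\alpha})$.

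The main obstacle is the second step: finding a single $\beta$ that is simultaneously large enough to absorb all Prikry supports and satisfies the technical requirement that ${\mathbb C}(\alpha_k,\beta)$ is defined (i.e.\ $\beta$ inaccessible in the paper's setup). Each individual Prikry decision yields a bounded support, but closing under the support function $F$ while keeping $\beta$ inaccessible requires a reflection argument based on the Mahlo property of $\kappa$. Once $\beta$ is located, the uniformizing upper part $h^0$ turns the witnessing direct extensions into dense antichains in ${\mathbb D}({\vec\alpha}^\frown\beta)$, and the final verification that $(t_{\vec\alpha},h')\Vdash\dot x=\dot y_{\vec\alpha}$ is a routine combination of density, genericity and $h'\le h^0$.
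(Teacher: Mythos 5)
Your argument is correct, but it takes a genuinely different route from the paper's and is, in a few respects, more elementary. The paper first uniformizes over whole names (finding $h^0$ so that whenever some $(t,h'')$ forces $\dot x = \dot y$ for a ${\mathbb D}$-name $\dot y$, so does $(t,h^0)$), then for each triple $(\vec\alpha,\delta,\dot y)$ uses the Capturing Lemma (Property III) directly on the downwards-closed set of deciding stems, diagonalizes via Lemma \ref{normalitylemma}, and finally reduces the name at one extra Prikry point via Lemma \ref{bddsetlemma} before pushing it below $\gamma$ using $\gamma$-cc and Mahloness of $\gamma$. You instead work bit-by-bit: you uniformize the decisions of each atom ``$\check\xi\in\dot x$'' (one use of Property II per $\xi$, then an intersection via Property I), extract support bounds $\gamma_{\vec\alpha,d,\xi}$ from the Prikry Lemma, close under the resulting function $F$ to locate an inaccessible $\beta$, and then verify density of $D^+_{\vec\alpha,\xi}\cup D^-_{\vec\alpha,\xi}$ in ${\mathbb D}(\vec\alpha^\frown\beta)$ by hand, finishing via genericity of $f$ (using that ${\mathbb C}(\alpha_k,\beta)$ embeds completely in ${\mathbb C}(\alpha_k,\alpha_{k+1})$). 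This bypasses any explicit invocation of Property III (though of course the Prikry Lemma uses it internally) and of Lemma \ref{bddsetlemma}, trading that machinery for a more hands-on closure-and-density argument. The one point you should state more carefully is the passage from the direct extension $(t',h'')$ supplied by the Prikry Lemma to the assertion that $(t_{\vec\alpha,q},h^0)$ itself decides: this holds because the uniformization was over \emph{all} lower parts $t'$ and $h^0\le h^0_\xi$, but it is worth saying explicitly; with that spelled out, the proof is complete.
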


\begin{proof} 
   As in the first step of the proof of the Prikry Lemma, we find $h^0 \le h$ such that for every
   lower part $t = (p_0, \beta_1, \ldots, \beta_k, p_k)$, if there are
   an upper part $h'$ and a ${\mathbb D}(\langle \beta_1, \ldots, \beta_k \rangle)$ -name $\dot y$ 
   such that $(t, h') \forces \dot x = \dot y$ then $(t,  h^0) \forces \dot x = \dot y$.

   For each $\vec \alpha = (\alpha_1, \ldots, \alpha_k) \in S$, each inaccessible $\delta$ with
   $\mu, \eta < \delta < \kappa$ and each canonical ${\mathbb D}(\vec \alpha^\frown \delta)$-name $\dot y$ for
   a subset of $\mu$, let $X(\vec \alpha, \delta, \dot y)$ be the set of
   lower parts $s$ such that
\[
    s =  (q_0, \alpha_1, q_1, \ldots, \alpha_k, q_k, \gamma, r)
\]
   for some $\gamma > \delta$, and $(s, h_0) \forces \dot x = \dot y$. 
   Since this is a downwards closed set of lower parts, we may find
   $h^{\vec \alpha, \delta, \dot y} \le h^0$ which captures it.
   Using Lemmas \ref{kappacomplete} and  \ref{normalitylemma} we may then find 
   an upper part $h^1$ such that $h^1 \downharpoonleft \delta \le h^{\vec \alpha, \delta, \dot y}$
   for all $\vec \alpha, \delta, \dot y$,    
   and also $\min(\dom(h^1)) > \mu, \eta$. By shrinking $\dom(h^1)$ if necessary, we will
   also arrange that $\dom(h^1)$ consists of Mahlo cardinals. 

   Fix for the moment a sequence $\vec \alpha = \langle \alpha_1, \ldots, \alpha_k \rangle \in S$.  Fix some 
   $\gamma \in \dom(h^1)$  and consider the condition
\[
   ( (\emptyset, \alpha_1, \emptyset, \ldots, \emptyset, \alpha_k, \emptyset, \gamma, h^1(\gamma)), h^1).
\]
   Working as in the proof of Lemma \ref{bddsetlemma}, we may find $r \le h^1(\gamma)$
   and $h^* \le h^1$ such that 
\[
   ( (\emptyset, \alpha_1, \emptyset, \ldots, \emptyset, \alpha_k, \emptyset, \gamma, r), h^*) \forces \dot x = \dot y
\]
   where $\dot y$ is  a canonical ${\mathbb D}(\vec \alpha^\frown \gamma)$-name for a subset of $\mu$.
   Since ${\mathbb D}(\vec \alpha^\frown \gamma)$ has the $\gamma$-cc and $\gamma$ is Mahlo, $\dot y$ is a 
   canonical ${\mathbb D}(\vec \alpha^\frown \delta)$-name for some inaccessible $\delta$ with $\mu, \eta < \delta < \gamma$. 


   By construction $r \le h^1(\gamma) \le h^{\vec \alpha, \delta, \dot y}(\gamma)$.
   By the choice of $h^0$, we see that
\[
   ( (\emptyset, \alpha_1, \emptyset, \ldots, \emptyset, \alpha_k, \emptyset, \gamma, r), h^0) \forces \dot x = \dot y.
\] 
   By the choice of $h^1$, for every $\gamma_1, \gamma_2  \in \dom(h^1)$
   with $\delta < \gamma_1 < \gamma_2$  
   the set of $r^* \in {\mathbb C}(\gamma_1, \gamma_2)$ such that 
\[
   ( (\emptyset, \alpha_1, \emptyset, \ldots, \emptyset, \alpha_k, \emptyset, \gamma_1,  r^*), h^0) \forces \dot x = \dot y
\] 
   is dense
   below $h^1(\gamma_1)$. So for every $\gamma_1 \in \dom(h_1)$ 
   with $\delta < \gamma_1$
\[
   ( ( \emptyset, \alpha_1, \emptyset, \ldots, \emptyset, \alpha_k, \emptyset, \gamma_1, h^1(\gamma_1)  ), h^1) \forces \dot x = \dot y,
\] 
   which implies that
\[
   ( ( \emptyset, \alpha_1, \emptyset, \ldots, \emptyset, \alpha_k, \emptyset ), h^1 \downharpoonleft \delta) \forces \dot x = \dot y.
\] 
  To record their dependence on $\vec \alpha$,
   we write $\delta_{\vec \alpha}$ for $\delta$ and $\dot y_{\vec \alpha}$ for $\dot y$.

   Let $\beta$ be the supremum of the $\delta_{\vec \alpha}$ for $\vec \alpha \in S$, and let
   $h' = h^1 \downharpoonleft \beta$. It is now easy to see that the ordinal $\beta$, upper part
   $h'$ and family of names $\langle \dot y_{\vec \alpha} : \vec \alpha \in S \rangle$ are as
   required.

\end{proof} 

\subsection{Characterisation of genericity}

     We will need one more technical fact about ${\mathbb P}$, namely a characterisation of 
    the generic object. Similar ``geometric'' characterisations for other Prikry-type forcing posets
    appear at many places \cite{Mathias,Mitchell,James} in the literature.

\begin{lemma}[Genericity Lemma] \label{genericitylemma} Let
\[
     f_0, \kappa_1, f_1, \ldots
\]
   be such that
\begin{enumerate}
\item $f_i$ is ${\mathbb C}(\omega, \kappa_1)$-generic for $i = 0$ and
   ${\mathbb C}(\kappa_i, \kappa_{i+1})$-generic for $i > 0$.
\item For all upper parts $h$  there is
   an integer $s$ such that $\kappa_t \in \dom(h)$ and
   $h(\kappa_t) \in f_t$ for all $t \ge s$. 
\end{enumerate}

   Then this is a generic sequence for ${\mathbb P}$. 
\end{lemma}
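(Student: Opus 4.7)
The proof of this ``Genericity Lemma'' follows the well-trodden paradigm of Mathias-style genericity characterisations for Prikry-type forcings. Given a dense open $D \subseteq \mathbb{P}$ in $V$, I will show that the filter $F$ generated by the sequence meets $D$. Mirroring the first step of the proof of the Prikry Lemma, use Property~II to find an upper part $h^0$ such that for every lower part $t$, $(t,h) \in D$ for some upper part $h$ if and only if $(t, h^0) \in D$. Set $X = \{t : (t, h^0) \in D\}$, which is downward closed since $D$ is open. Apply Property~III to find an upper part $h^+ \le h^0$ capturing $X$. By hypothesis~(2), fix $N$ such that $\kappa_t \in \dom(h^+)$ and $h^+(\kappa_t) \in f_t$ for all $t \ge N$; the same then holds with $h^0$ in place of $h^+$, as $h^+ \le h^0$ and each $f_t$ is closed under weakening.

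For each $k \ge N - 1$ and each tuple $c = (c_0, \ldots, c_k)$ in the product forcing $\mathbb{D}(\kappa_1, \ldots, \kappa_{k+1}) = \prod_{i=0}^{k} \mathbb{C}(\kappa_i, \kappa_{i+1})$, write $s_c = (c_0, \kappa_1, c_1, \ldots, \kappa_k, c_k)$; observe that $s_c \in V_{\kappa_{k+1}}$ and $\kappa_{k+1} \in \dom(h^+)$, so the Capturing Lemma's case~(a)/(b) dichotomy at $s_c$ is non-vacuous. Taking the natural reading of hypothesis~(1) under which $f_0 \times \cdots \times f_k$ is $\mathbb{D}(\kappa_1, \ldots, \kappa_{k+1})$-generic over $V$, define
\[
E_k = \{c : s_c \in X \text{ or case~(a) of Capturing holds at } s_c \text{ with respect to } X \text{ and } h^+\}.
\]
If $E_k$ is dense, Cohen genericity furnishes some $c \in E_k \cap (f_0 \times \cdots \times f_k)$: in the first alternative, $(s_c, h^0) \in F \cap D$, finishing the proof; in the second, Clause~3 of Capturing ensures that $\{q \le h^+(\kappa_{k+1}) : s_c \frown (\kappa_{k+1}, q) \in X\}$ is dense below $h^+(\kappa_{k+1})$ in $\mathbb{C}(\kappa_{k+1}, \kappa_{k+2})$, so by $\mathbb{C}(\kappa_{k+1}, \kappa_{k+2})$-genericity of $f_{k+1}$ together with $h^+(\kappa_{k+1}) \in f_{k+1}$, there is $q \in f_{k+1}$ with $s_c \frown (\kappa_{k+1}, q) \in X$, whence $(s_c \frown (\kappa_{k+1}, q), h^0) \in F \cap D$.

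The main obstacle is establishing density of $E_k$. Given $c' \in \mathbb{D}(\kappa_1, \ldots, \kappa_{k+1})$ with $c' \notin E_k$, so that case~(b) holds at $s_{c'}$, density of $D$ below $(s_{c'}, h^+)$ yields some $(t, h'') \in D$. If $\mathrm{length}(t) = k$, the Cohen components of $t$ refine $c'$ to a tuple $c'' \in E_k$ via $s_{c''} \in X$. If $\mathrm{length}(t) = k+1$, write $t = s_{c''} \frown (\alpha, q)$ with $c'' \le c'$: case~(b) at $s_{c'}$ forces $c'' < c'$ strictly, and then $s_{c''} \frown (\alpha, q) \in X$ witnesses case~(a) at $s_{c''}$, giving $c'' \in E_k$. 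The residual case $\mathrm{length}(t) > k+1$ is handled by recursion on the length-$(k+1)$ prefix of $t$, using the factorisation of Lemma~\ref{factorlemma} to identify the forcing below the relevant condition with a version of $\mathbb{P}$ over $\kappa_{k+1}$; the central technical point is that this recursion terminates in finitely many steps. This is the hard part: it must be ruled out that the recursion produces an infinite descending chain of lower parts whose lengths grow without bound, which would contradict the finiteness of lower parts. The resolution applies the Prikry Lemma to the statement ``there is a length-same $D$-extension below the current condition'', combined with the capturing dichotomy, to force termination at a finite stage.
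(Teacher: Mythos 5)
Your overall strategy is the right one, and for extensions adding at most one new coordinate your argument tracks the paper's. But there is a genuine gap in the case you flag as ``the hard part,'' and the resolution you gesture at does not work. The issue is that you apply Property~III only once, to a single downwards-closed set $X = X_0$. Capturing $X_0$ tells you, for each lower part $s$ and each $\alpha\in\dom(h^+)$ with $s\in V_\alpha$, whether $s$ has a one-step extension into $X_0$; it says nothing about two-step or $j$-step extensions. When the dense set $D$ is only met by extending $s_{c'}$ with several new prikry points, knowing that $h^+$ captures $X_0$ gives you no leverage at $s_{c'}$. Your proposed recursion on the length-$(k+1)$ prefix, with an appeal to the Prikry Lemma to ``force termination,'' is not actually an argument: nothing in the Prikry Lemma bounds how many new coordinates an extension into $D$ might require, and the capturing dichotomy for $X_0$ alone is the wrong tool for extensions of depth $\ge 2$.

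The paper supplies the missing idea by \emph{iterating} the capturing construction. After producing $h_0$ and $X_0$ as you do, it builds a decreasing sequence of upper parts $h_1\ge h_2\ge\cdots$ and downwards-closed sets $X_1, X_2,\dots$ where $h_{n+1}$ captures $X_n$ and $X_{n+1}$ is the set of lower parts that can reach $X_n$ in one step using $h_{n+1}$; thus $X_j$ records ``reachable from $X_0$ in $j$ steps.'' It then takes a lower bound $h_\infty$ of all the $h_n$ (Property~I) and \emph{only then} invokes hypothesis~(2) to find the stabilization point $k$ against $h_\infty$. The density claim proved is that $\{(q_0,\dots,q_{k-1}) : \exists j\;(q_0,\kappa_1,\dots,q_{k-1})\in X_j\}$ is dense in $\mathbb{C}_0\times\cdots\times\mathbb{C}_{k-1}$, precisely because any witness of $D$-density below $(s,h_\infty)$ has some finite extra length $j$, and one can step its prefix back through $X_1,\dots,X_j$ using the captured sets. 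Once genericity of $f_0\times\cdots\times f_{k-1}$ gives a tuple landing in $X_j$, the capturing of $X_{j-1},\dots,X_0$ together with genericity of $f_k,\dots,f_{k+j-1}$ walks the condition down to $X_0$. Without the $X_n$ hierarchy, your density argument for $E_k$ cannot close the case of long extensions, and the proof does not go through.
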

   
\begin{proof} 

   For our later convenience we define ${\mathbb C}_0 = {\mathbb C}(\omega, \kappa_1)$ and
   ${\mathbb C}_i = {\mathbb C}(\kappa_i, \kappa_{i+1})$ for $i > 0$. 
   We make the remark that by an easy application of Easton's Lemma the filters
   $f_0, \ldots, f_n$ are mutually generic, that is
   $f_0 \times \ldots \times f_n$ is generic over $V$ for ${\mathbb C}_0 \times \ldots \times {\mathbb C}_n$.

    We now fix $E$ a dense open set in ${\mathbb P}$, with the ultimate goal of
   showing that $E$ meets the filter on ${\mathbb P}$ generated by 
\[
     f_0, \kappa_1, f_1, \ldots
\]
   To achieve this goal we need to ``canonise'' $E$ in a sense to be made precise
   later.

   By a familiar diagonal intersection argument,
    there is an upper part $h_0$ such that for every lower part $s$,
\[
       \exists h \; (s, h) \in E \iff (s, h_0 \downharpoonleft s) \in E.
\]
   Since the set $E$ is open, it is easy to see that if we let
\[
    X_0 = \{ s : (s, h_0 \downharpoonleft s) \in E \}
\]
   then $X_0$ is a downwards closed set of lower parts.  
 
    Applying Property III repeatedly we construct
   downwards closed sets $X_n$ and upper parts $h_n$ such that:
\begin{enumerate}
\item  $h_{n+1} \le h_n$.
\item  $h_{n+1}$ captures $X_n$.
\item  $X_{n+1}$ is the set of lower parts $s$ such that for some (equivalently,
  for every)  $\alpha \in \dom(h_{n+1})$ such that $s \in V_\alpha$ there is
 $q \le h_{n+1}(\alpha)$ with   $s^\frown (q, \alpha) \in X_n$.
\end{enumerate}

   We  appeal to Property I  to find an upper part $h_\infty$ such that
   $h_\infty \le h_n$ for all $n$. By the hypotheses, we  find an integer $k$ such that
   $\kappa_l  \dom(h_\infty)$ and $h_\infty(\kappa_l) \in f_l$
   for all $l \ge k$.

\begin{claim}
\[
  \{   (q_0, \ldots, q_{k-1}) : \exists j  \; (q_0, \kappa_1, \ldots, \kappa_{k-1}, q_{k-1}) \in X_j \}
\]
  is dense in  ${\mathbb C}_0 \times \ldots \times {\mathbb C}_{k-1}$. 
\end{claim}
   
\begin{proof} Let $(p_0, \ldots, p_{k-1}) \in {\mathbb C}_0 \times \ldots \times {\mathbb C}_{k-1}$,
   and consider the condition 
\[
   ( (p_0, \kappa_1, \ldots, \kappa_{k-1}, p_{k-1}, \kappa_k, h_\infty(\kappa_k)), h_\infty).
\]
   Since $E$ is dense there is an extension
\[
   ( (q_0, \kappa_1, \ldots, \kappa_{k-1}, q_{k-1}, \bar\kappa_k, q_k, \ldots, \bar\kappa_{k+j-1}, q_{k+j-1}), h) \in E
\]
   for some $j > 0$. Call the lower part of this extension $s$, and observe that by construction of $h_0$ we have
     $(s, h_0 \downharpoonleft s) \in E$ so that $s \in X_0$. Now observe that $\bar \kappa_{k+j-1} \in \dom(h_\infty)$
   and $q_{k+j-1} \le h_\infty(\bar\kappa_{k+j-1}) \le h_1(\bar\kappa_{k+j-1})$, so that
\[
    (q_0, \kappa_1, \ldots, \kappa_{k-1}, q_{k-1}, \bar\kappa_k, q_k, \ldots, \bar\kappa_{k+j-2}, q_{k+j-2}) \in X_1.
\]
   Stepping backwards in the obvious way we eventually obtain that
\[
    (q_0, \kappa_1, \ldots, \kappa_{k-1}, q_{k-1}) \in X_j.
\]
\end{proof}

   Since $f_0 \times \ldots \times f_{k-1}$ is generic, we obtain conditions $q_i \in f_i$ for 
  $i < k$ such that $t \in X_j$ where $t = (q_0, \kappa_1, \ldots, \kappa_{k-1}, q_{k-1})$. 
  Since $t \in X_j$, $\kappa_k, \kappa_{k+1} \in \dom(h_j)$  and
  $\kappa_k < \kappa_{k+1}$,
\[
   \{ p \in {\mathbb C}_k : t^\frown (\kappa_k, p) \in X_{j-1} \}
\]
   is dense below $h_j(\kappa_k)$. Also $h_j(\kappa_k) \in f_k$ because
   $h_\infty(\kappa_k) \in f_k$ and $h_\infty \le h_j$. So we may find
   $q_j^* \in f_j$ such that 
\[
    t^\frown (\kappa_j, q_j^*) \in X_{j-1}.
\]
   Repeating this argument $j$ times we construct $q_i^* \in f_i$ for 
  $k \le i < k + j$ such that
\[
   u =  t^\frown (\kappa_j, q_j^*, \ldots, \kappa_{k+j-1}, q_{k+j-1}^*) \in X_0,
\]
   that is to say that $(u, h_0 \downharpoonleft u) \in E$. 
    
   But it is now easy to verify that $(u, h_0 \downharpoonleft u)$ is in the filter
   generated by the sequence of $(f_i)$: simply observe that
\begin{enumerate}
\item $q_i \in f_i$ for $i < k$.
\item $q^*_i \in f_i$ for $k \le i < k + j$.
\item $h_0(\kappa_i) \in f_i$ for $i \ge k + j$.
\end{enumerate}
  
    This concludes the proof of the Genericity Lemma.
\end{proof}

\section{The forcing $\mathbb Q$ and its properties} \label{qsection}

   We work throughout with the same hypotheses as in Section \ref{pforcing}. 
   In particular $\mathcal F$ has properties I, II and III and $\mathbb P$ is the Prikry-type forcing defined
   from $\mathcal F$.
   Let $2^{\kappa^+} = \lambda$, and let $T$ be a tree of height $\kappa^+$ such that $T$ has at least $\lambda$ branches
   and each level of $T$ has size at most $\kappa^+$.
%
%
   Let $\langle x_\beta : \beta < \lambda \rangle$ enumerate a sequence of distinct branches, and
   enumerate $Lev_\alpha(T)$ as $\langle t(\alpha, i) : i < \vert Lev_\alpha(T) \vert \rangle$ for
   each $\alpha < \kappa^+$.

\begin{definition} \label{harmoniousdef}
   Let $A$ be a function such that $\dom(A)$ is a bounded set of inaccessible cardinals less
  than $\kappa$, and $A(\alpha) \in {\mathbb B}(\alpha, \kappa)$ with $A(\alpha) \neq 0$ for all $\alpha \in \dom(A)$.
  Let $s = (q_0, \alpha_0, q_1, \ldots, \alpha_k, q_k)$ be a lower part, and let $\eta < \kappa$. Then
  {\em $s$ is harmonious with $A$ past $\eta$} if and only if for all $j$ such that $\alpha_j \ge \eta$
   we have $\alpha_j \in  \dom(A)$ and $q_j \le A(\alpha_j)$.
\end{definition}  

  Let $\langle \dot {\mathcal G}_\alpha: \alpha < \lambda \rangle$ enumerate all the canonical $\mathbb P$-names for 
  graphs on $\kappa^+$.  We define a forcing poset $\mathbb Q$.

  Conditions in $\mathbb Q$ are quadruples $(A, B, t, f)$ such that:

\begin{enumerate}

\item  $A$ is a function such that $\dom(A)$ is a bounded set of inaccessible cardinals less
  than $\kappa$, and $A(\alpha) \in {\mathbb B}(\alpha, \kappa)$ with $A(\alpha) \neq 0$ for all $\alpha \in \dom(A)$.

\item  $B$ is an upper part.

\item  $t$ is a triple $(\rho,  a,  b)$ where $\rho < \kappa$, $a \in [\kappa^+]^{<\kappa}$ and
    $b \in [\lambda]^{<\kappa}$.

\item  $f$ is a sequence $\langle f_{\eta, \beta} : \eta < \rho, \beta \in b \rangle$ such that
   each function $f_{\eta, \beta}$ has domain $a$.

\item  $f_{\eta, \beta} (\zeta) \in \{ x_\beta \restriction \zeta \} \times \kappa$
   for all $\eta < \rho$, $\beta \in b$ and $\zeta \in a$.

\item  \label{crucialclause} For every $\eta \in \dom(A) \cap \rho$, every lower part $s$ harmonious with $A$ past $\eta$, and every
   $\beta, \gamma \in b$ and $\zeta', \zeta \in a$ such that
   $f_{\eta, \beta}(\zeta') = f_{\eta, \gamma}(\zeta') \neq f_{\eta, \beta}(\zeta) = f_{\eta, \gamma}(\zeta)$,   
\[
    (s, B) \forces  \zeta' {\dot {\mathcal G}}_\beta \zeta \iff \zeta' {\dot {\mathcal G}}_\gamma \zeta.
\] 

\end{enumerate} 

\begin{remark}
 In the last clause, if $s$ is one of the relevant stems then all ordinals
   appearing in $s$ are less than $\ssup(\dom(A))$. 
\end{remark}

  Let $q = (A, B, t, f)$ and $q' = (A', B', t', f')$ be two conditions 
 in $\mathbb Q$. Then $q' \le q$ if and only if:
\begin{enumerate}
\item $\dom(A)$ is an initial segment of $\dom(A')$, and $A' \restriction \dom(A) = A$.
\item $B' \le B$, that is $\dom(B') \subseteq \dom(B)$ and $B'(\alpha) \le B(\alpha)$ for all $\alpha \in \dom(B')$.
\item For all $\alpha \in \dom(A') \setminus \dom(A)$, $\alpha \in \dom(B)$ and
     $A'(\alpha) \le B(\alpha)$. 
\item If we let $t = (\rho, a, b)$ and $t'=(\rho', a', b')$ then
    $\rho \le \rho'$, $a \subseteq a'$ and $b \subseteq b'$.
\item $f'_{\eta, \beta}(\zeta) = f_{\eta, \beta}(\zeta)$ for all $\eta < \rho$,
   $\beta \in b$ and $\zeta \in a$.
\end{enumerate} 

\begin{remark} The forcing poset $\mathbb Q$ is intended to add (among other things) a generic 
   function $h$ from $\kappa$ to $V_\kappa$ of the right general form to be an upper part.
   If we ultimately force with some version of $\mathbb P$ for which the generic function $h$
   is a legitimate upper part, then we will add a generic sequence $x$ which eventually obeys
   $h$ but we do not know past which point on $x$ this will begin to happen. This motivates the
   notion of ``harmonious past $\eta$'', and also explains why each $\eta$ gets its own
   set of functions $f_{\eta, \beta}$. 
 \end{remark}


\begin{lemma} \label{densitylemma}
   If  $G_{\mathbb Q}$ is $\mathbb Q$-generic then:
\begin{enumerate}
\item If we let $h^{G_{\mathbb Q}} = \bigcup \{ A^p : p \in {G_{\mathbb Q}} \}$ then
   $h^{G_{\mathbb Q}}$ is a function, $\dom(h^{G_{\mathbb Q}})$ is unbounded in $\kappa$, and for every
   upper part $h$ we have that  $\alpha \in \dom(h^{G_{\mathbb Q}})$ and $h^{G_{\mathbb Q}}(\alpha) \le h(\alpha)$ for all large enough 
  $\alpha \in \dom(h)$.
\item For all $\eta < \kappa$ and $\beta < \lambda$, if we  let $F^{G_{\mathbb Q}}_{\eta, \beta} = \bigcup \{ f^p_{\eta, \beta} : p \in {G_{\mathbb Q}} \}$ then
    $F^{G_{\mathbb Q}}_{\eta, \beta}$ is a function with domain $\kappa^+$.
\end{enumerate}
\end{lemma}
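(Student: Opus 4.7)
The plan is to prove both clauses of the lemma by density arguments in $\mathbb Q$.

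For clause (1), that $h^{G_{\mathbb Q}}$ is a function is immediate from the compatibility of conditions: any two $p, p' \in G_{\mathbb Q}$ have a common extension $q \in G_{\mathbb Q}$, and the ordering on $\mathbb Q$ makes $\dom(A^p)$ and $\dom(A^{p'})$ both initial segments of $\dom(A^q)$ with matching values on their overlap. For unboundedness I would show $D_\mu = \{p : \sup(\dom(A^p)) > \mu\}$ is dense for each $\mu < \kappa$. Given $p = (A, B, (\rho, a, b), f)$, pick $\alpha \in \dom(B)$ with $\alpha > \max(\mu, \rho, \sup(\dom(A)))$ --- such $\alpha$ exists because $\dom(B) \in U_0$ is cofinal in $\kappa$ --- and set $q = (A \cup \{(\alpha, B(\alpha))\}, B, (\rho, a, b), f)$. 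Order checks are routine, and clause (6) of the definition of $\mathbb Q$ holds for $q$: since $\alpha > \rho$ we have $\dom(A^q) \cap \rho = \dom(A) \cap \rho$, so only the ``newly harmonious'' lower parts $s$ that use a step at $\alpha$ require verification. For such $s$ the $\alpha$-step is necessarily the last step of $s$ (since $\alpha$ exceeds $\sup(\dom(A))$, and any step of $s$ above $\eta$ must lie in $\dom(A^q) = \dom(A) \cup \{\alpha\}$), so writing $s^-$ for $s$ with this step deleted, $s^-$ is harmonious with $A$ past $\eta$ and $(s, B) \le (s^-, B)$ in $\mathbb P$. Clause (6) for $p$ applied to $s^-$ then transfers to $(s, B)$.

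For the eventual-domination part of clause (1), fix an upper part $h \in V$ and an arbitrary $p = (A, B, t, f)$. Since $\mathcal F$ is an ultrafilter on $\mathbb B_0$, the Boolean meet $[B]_{U_0} \wedge [h]_{U_0}$ lies in $\mathcal F$ and is represented by an upper part $B^*$ with $\dom(B^*) \subseteq \dom(B) \cap \dom(h)$ and $B^*(\alpha) = B(\alpha) \wedge h(\alpha)$ on $\dom(B^*)$. Then $q = (A, B^*, t, f)$ extends $p$ in $\mathbb Q$, and every further $r \le q$ satisfies $B^r \le B^* \le h$ on $\dom(B^r)$, so each $\alpha$ added to $\dom(A^r)$ past $q$ lies in $\dom(h)$ and satisfies $A^r(\alpha) \le h(\alpha)$. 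Combining this commitment with the unboundedness density argument applied in the cone below $q$ yields the required domination for all large enough $\alpha$ in $\dom(h)$.

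For clause (2), I would establish density of the sets $\{p : \rho^p > \eta\}$, $\{p : \beta \in b^p\}$, and $\{p : \zeta \in a^p\}$ for each $\eta < \kappa$, $\beta < \lambda$, $\zeta < \kappa^+$; by successive extension and genericity this gives $\dom(F^{G_{\mathbb Q}}_{\eta, \beta}) = \kappa^+$ for every $\eta$ and $\beta$. In each of these density arguments the only subtle point is clause (6) for the enlarged condition, which I would trivialise by defining the newly required $f$-values as $f_{\eta, \beta}(\zeta) = (x_\beta \restriction \zeta, \xi_{\eta, \beta, \zeta})$, choosing second coordinates $\xi$ so that $\beta \mapsto \xi_{\eta, \beta, \zeta}$ is injective for each new pair $(\eta, \zeta)$ (possible since there are fewer than $\kappa$ values of $\beta$ to assign and $\kappa$ second coordinates available). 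With this device, whenever the triggering hypothesis $f_{\eta, \beta}(\zeta) = f_{\eta, \gamma}(\zeta)$ of clause (6) involves a newly added $\eta$, $\beta$, or $\zeta$, it forces $\beta = \gamma$, making the biconditional trivial; clause (6) for the extended condition thus reduces to instances already verified for $p$. The main obstacle I anticipate is precisely the bookkeeping of clause (6) through these arguments; the ``distinct second coordinates'' trick is what lets one avoid strengthening $B$ to resolve new forcing equivalences at each extension step.
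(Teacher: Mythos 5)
Your proof is correct and takes essentially the same approach as the paper. The only cosmetic difference is in clause~(1): the paper handles unboundedness and eventual domination by $h$ in a single density step (choosing $\mu\in\dom(B^q)$, adding $(\mu,B^q(\mu))$ to $A$, and simultaneously shrinking $B$ so that $B^r\le B^q, h$ with $\min(\dom(B^r))>\mu$), whereas you split this into two separate density arguments; both work, and your observation that the Boolean meet $[B]_{U_0}\wedge[h]_{U_0}$ remains in $\mathcal F$ is exactly the justification the paper leaves implicit when it writes ``some upper part such that $B^r\le B^q, h$.''
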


\begin{proof}   For the first claim, we suppose that $\nu < \kappa$, $h$ is an upper part,
   and $q$ is an arbitrary condition. Let $\mu \in \dom(B^q)$ with $\mu > \nu$, and define
   $r = (A^r, B^r, t^r, f^r)$ as follows: $A^r = A^q \cup \{ (\mu, B^q(\mu)) \}$,
   $B^r$ is some upper part such that $B^r \le B^q, h$ and $\mu < \min(\dom(B^r))$,
   $t^r= t^q$ and $f^r = f^q$. 
   
   We must verify that $r$ is a condition and $r \le q$. The only non-trivial point is to
   see that $r$ satisfies Clause \ref{crucialclause}) in the definition of  conditionhood in $\mathbb Q$. 
   Let $t$ be a lower part harmonious with $A^r$ past $\eta$.
   There are now two cases. If  $t$ is harmonious with $A^q$ past $\eta$ then $(t, B^r) \le (t, B^q)$,
   and we are done by Clause \ref{crucialclause}) for $q$. Otherwise
   $t= s^\frown \langle \mu, p \rangle$ for some $p \le B^q(\mu)$ and $s$ harmonious with $A^q$ past $\eta$, 
   $(t, B^r) \le (s, B^q)$, and again we are done by Clause \ref{crucialclause}) for $q$. 

   For the second claim, we fix $\zeta, \eta, \beta$ and then find $a \supseteq a^q$, $b \supseteq b^q$ and $\rho \ge \rho^q$ 
   such that $\eta < \rho$, $\zeta \in a$, $\beta \in b$. We then define 
   $r = (A^r, B^r, t^r, f^r)$ as follows: $A^r = A^q$, $B^r = B^q$, $t^r = (\rho, a, b)$ and
   $f^r$ is chosen to extend $f^q$ and to be such that the values
   $f^r_{\eta', \beta'}(\zeta')$ for $(\eta', \zeta', \beta') \in (\rho \times a \times b) \setminus (\rho^q \times a^q \times b^q)$
   are all distinct from each other and from any of the values $f^q_{\eta', \beta'}(\zeta')$
   for  $(\eta', \zeta', \beta') \in \rho^q \times a^q \times b^q$. This choice ensures that Clause \ref{crucialclause}) in the 
   definition of conditionhood holds, so that $r$ is a condition with $r \le q$. 
\end{proof} 

   We recall that for a regular uncountable cardinal $\nu$, a poset $\mathbb R$ is {\em $\nu$-compact} if and only if  
   the following condition holds: for every $X \subseteq {\mathbb R}$ with $\vert X \vert < \nu$,
   if every finite subset of $X$ has a lower bound then $X$ has a lower bound.

\begin{lemma} $\mathbb Q$ is $\kappa$-compact. \label{compactlemma}  
\end{lemma}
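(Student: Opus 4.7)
Write $X = \{q_\xi : \xi < \eta\}$ with $\eta < \kappa$. The plan is to assemble a lower bound $q^* = (A^*, B^*, t^*, f^*)$ by merging the components of the $q_\xi$ and then to verify Clause \ref{crucialclause} using the properties of $\mathcal F$.

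First I merge the ``static'' data. Finite compatibility forces $\{A^{q_\xi}\}$ to be a chain under initial-segment inclusion, so $A^* = \bigcup_\xi A^{q_\xi}$ is well-defined and regularity of $\kappa$ keeps $\dom(A^*)$ bounded. Let $\rho^* = \sup_\xi \rho^{q_\xi}$, $a^* = \bigcup_\xi a^{q_\xi}$, $b^* = \bigcup_\xi b^{q_\xi}$, all of size less than $\kappa$, and set $t^* = (\rho^*, a^*, b^*)$. For a triple $(\eta_0, \beta, \zeta) \in \rho^* \times b^* \times a^*$, set $f^*_{\eta_0, \beta}(\zeta)$ to be the common value $f^{q_\xi}_{\eta_0, \beta}(\zeta)$ if that triple lies in some $q_\xi$'s scope (well-defined by pairwise compatibility), and otherwise take $(x_\beta \restriction \zeta, \nu)$ for a $\nu < \kappa$ distinct from every already-used second coordinate and from every other ``new'' choice; this is possible since fewer than $\kappa$ coordinates must be avoided.

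Choosing $B^*$ is the heart of the matter. For each tuple $T = (\eta_0, \beta, \gamma, \zeta, \zeta')$ satisfying the hypothesis of Clause \ref{crucialclause} for $q^*$, the distinctness of the ``new'' values forces all four relevant values of $f^*$ in $T$ to be ``old'', so some finite $F_T \subseteq X$ collectively witnesses the data and equalities of $T$. For each lower part $s$ harmonious with $A^*$ past $\eta_0$, enlarge $F_T$ to a finite $F_{T, s} \subseteq X$ such that every entry cardinal $\alpha_j$ of $s$ with $\alpha_j \ge \eta_0$ lies in $\dom(A^q)$ for some $q \in F_{T, s}$, and by the finite-compatibility hypothesis pick $r_{T, s} \in \mathbb Q$ with $r_{T, s} \le q$ for every $q \in F_{T, s}$. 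The tuples $T$ number fewer than $\kappa$ (they live in $(\dom(A^*) \cap \rho^*) \times (b^*)^2 \times (a^*)^2$), and the relevant lower parts $s$ live in $V_{\ssup(\dom(A^*))}^{<\omega}$ with $\ssup(\dom(A^*)) < \kappa$ and $\kappa$ inaccessible, so they too number fewer than $\kappa$. The $\kappa$-completeness of $\mathcal F$ (Property I) then delivers an upper part $B^*$ with $B^* \le B^{q_\xi}$ for every $\xi < \eta$ and $B^* \le B^{r_{T, s}}$ for every pair $(T, s)$.

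With $q^* = (A^*, B^*, t^*, f^*)$ in hand, verifying $q^* \le q_\xi$ is routine; for $\alpha \in \dom(A^*) \setminus \dom(A^{q_\xi})$, pick $\xi'$ with $\alpha \in \dom(A^{q_{\xi'}})$ and exploit pairwise compatibility of $q_\xi, q_{\xi'}$ to get $\alpha \in \dom(B^{q_\xi})$ and $A^*(\alpha) = A^{q_{\xi'}}(\alpha) \le B^{q_\xi}(\alpha)$. For Clause \ref{crucialclause}, fix $T$ and $s$ as above: the construction of $F_{T, s}$ ensures that $T$'s data lies inside $r_{T, s}$ and that $s$ is harmonious with $A^{r_{T, s}}$ past $\eta_0$, so Clause \ref{crucialclause} for $r_{T, s}$ gives $(s, B^{r_{T, s}}) \forces \zeta' \dot{\mathcal G}_\beta \zeta \iff \zeta' \dot{\mathcal G}_\gamma \zeta$; since $B^* \le B^{r_{T, s}}$, the same statement holds for $(s, B^*)$. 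The hard part is precisely this last step: a single witness per tuple $T$ will not suffice because the entries of $s$ may involve $\dom(A^{q_\xi})$'s outside $F_T$, which is why $r_{T, s}$ must be indexed by both $T$ and $s$, and why the $\kappa$-completeness of $\mathcal F$ must absorb them all at once.
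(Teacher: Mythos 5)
Your proof has the right architecture but contains a counting error at the step that invokes Property~I, and that step would fail as written. You claim that the lower parts $s$ harmonious with $A^*$ past $\eta_0$ ``live in $V_{\ssup(\dom(A^*))}^{<\omega}$ \ldots so they too number fewer than $\kappa$.'' That is not so: the last entry $p_k$ of a lower part lies in ${\mathbb C}(\alpha_k, \kappa)$, a poset of size $\kappa$, and neither the constraint $p_k \le A^*(\alpha_k)$ (when $\alpha_k \ge \eta_0$) nor anything else cuts this down below $\kappa$; indeed, if all separators of $s$ lie below $\eta_0$ then harmoniousness imposes no constraint whatsoever on $s$. So there are $\kappa$-many admissible pairs $(T,s)$, and Property~I (which only bounds families of size less than $\kappa$) cannot produce an upper part $B^*$ below all the $B^{r_{T,s}}$. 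This is exactly the point at which the argument breaks.

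The repair is easy once you observe that the verification never uses the collapse entries $p_j$ of $s$, only which inaccessibles $\alpha_j \ge \eta_0$ occur as separators; those lie in $\dom(A^*)$, which has size less than $\kappa$. So index instead by pairs $(T, \vec\alpha)$ with $\vec\alpha$ a finite subset of $\dom(A^*)$ — of which there are fewer than $\kappa$ — and a single $r_{T,\vec\alpha}$ then handles every $s$ with separator profile $\vec\alpha$. The paper does something coarser and cleaner: it picks one lower bound $r^s$ for each finite $s \subseteq \mu$ (only $|\mu|<\kappa$ many) and chooses $B^*$ below every $B^{r^s}$. To see that this family suffices, note exactly the fact you already observed, that the $A^{q_\xi}$ form a chain under end-extension: given a tuple $T$ with witnessing finite set $F_T$ and given a lower part $u$, there is a single $q_{\xi_0}$ whose $A$-domain contains all the separators of $u$ above $\eta_0$ as well as $\eta_0$ itself, and then $s = F_T \cup \{\xi_0\}$ already makes $u$ harmonious with $A^{r^s}$ past $\eta_0$. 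So your instinct that the witnessing set must carry information about $s$ is correct, but the additional parameter needed is the finite set of separators, not the full lower part.
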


\begin{proof} Let $\mu < \kappa$, and let $\{ q_i : i < \mu \}$ be a set of conditions
   in $\mathbb Q$  such that for any finite subset $s$ of $\mu$ the set $\{ q_i : i \in s \}$ has a lower
    bound. Let $q_i = (A^i, B^i, t^i, f^i)$, and
    choose  for each finite $s \subseteq \mu$ a condition $r^s = (A^s, B^s, t^s, f^s)$ which is a
    lower bound for $\{ q_i : i \in s \}$. 

    We will define $r = (A^r, B^r, t^r, f^r)$ as follows: 
\begin{itemize}
\item  $A^r = \bigcup_{i < \mu} A^i$.
\item  $B^r$ is some upper part such that $\ssup(\dom(A^r)) < \dom(B^r)$ and
    $B^r \le B^s$ for all $s$.
\item $t^r = (\rho^r, a^r, b^r)$ where $\rho^r = \bigcup_{i < \mu} \rho^i$,  $a^r = \bigcup_{i < \mu} a^i$, $b^r = \bigcup_{i < \mu} b^i$.
\item If there is some $i$ such that $(\eta, \zeta, \beta) \in \rho^i \times a^i \times b^i$,
   then   $f^r_{\eta, \beta}(\zeta) = f^i_{\eta, \beta}(\zeta)$. As in the proof of Lemma \ref{densitylemma}, we choose 
   the values of $f^r_{\eta, \beta}(\zeta)$ for
    $(\eta, \zeta, \beta) \in \rho^r \times a^r \times b^r \setminus \bigcup_{i < \mu} (\rho^i \times a^i \times b^i)$ to be distinct
   from each other and from all values in $\{ f^r_{\eta, \beta}(\zeta) : (\eta, \zeta, \beta) \in \bigcup_{i < \mu} (\rho^i \times a^i \times b^i) \}$. 
\end{itemize}  
   We note that by our hypotheses the definition of $f^r_{\eta, \beta}(\zeta)$ yields a unique value. 
 
   As usual, the main issue is to verify that Clause \ref{crucialclause}) holds.  This is straightforward:
 if $f^r_{\eta, \beta}(\zeta) =  f^r_{\eta, \beta'}(\zeta) \neq  f^r_{\eta, \beta}(\zeta') =  f^r_{\eta, \beta'}(\zeta')$,
  then for some finite $s$ we have $f^s_{\eta, \beta}(\zeta) =  f^s_{\eta, \beta'}(\zeta) \neq  f^s_{\eta, \beta}(\zeta') =  f^s_{\eta, \beta'}(\zeta')$, 
  and so we are done because $B^r \le B^s$. 
\end{proof}

\begin{corollary} \label{compactcorollary} 
  The poset $\mathbb Q$ is $\kappa$-directed closed and also has the following property, which was
   dubbed ``parallel countable closure'' in \cite{5authors}: if $\langle q^0_i : i < \omega \rangle$ 
   and  $\langle q^1_i : i < \omega \rangle$ are decreasing sequences of conditions such that
   $q^0_i$ and $q^1_i$ are compatible for all $i$, then there is $q$ such that $q \le q^0_i, q^1_i$ for
   all $i$.
\end{corollary}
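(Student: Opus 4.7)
The plan is to derive both conclusions directly from the $\kappa$-compactness of $\mathbb{Q}$ established in Lemma \ref{compactlemma}. The key observation is that $\kappa$-compactness already reduces the problem of building lower bounds for small families to checking finite compatibility, so in each case I only need to verify that an appropriate finite-compatibility hypothesis holds.

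For $\kappa$-directed closure, suppose $D \subseteq \mathbb{Q}$ is directed with $|D| < \kappa$. Given any finite $s \subseteq D$, by directedness there is some $q \in D$ with $q \le r$ for every $r \in s$, so in particular $s$ has a lower bound. Applying Lemma \ref{compactlemma} to $D$ yields a lower bound for all of $D$, which establishes $\kappa$-directed closure.

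For parallel countable closure, let $X = \{ q^0_i : i < \omega \} \cup \{ q^1_i : i < \omega \}$. Given any finite $s \subseteq X$, pick $n < \omega$ large enough that every element of $s$ lies among $\{ q^0_i : i \le n \} \cup \{ q^1_i : i \le n \}$. Since the sequences are decreasing, $q^0_n$ extends $q^0_i$ for all $i \le n$, and similarly for $q^1_n$. By the hypothesis that $q^0_n$ and $q^1_n$ are compatible, they have a common lower bound $r$, and then $r$ extends every element of $s$. Thus every finite subset of $X$ has a lower bound, so by Lemma \ref{compactlemma} applied with $\mu = \omega$, the whole set $X$ has a lower bound $q$, which is precisely the desired condition with $q \le q^0_i, q^1_i$ for all $i$.

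There is essentially no obstacle here: the content has been packaged into Lemma \ref{compactlemma}, and the corollary is a clean unpacking of it. The only thing to be mildly careful about is the direction of the compatibility assumption in parallel countable closure — we do not assume the two sequences are pointwise above a common condition, only that $q^0_i$ and $q^1_i$ admit some common extension for each $i$, but the argument above uses only that weaker hypothesis at the single index $n$.
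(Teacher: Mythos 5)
Your derivation is exactly the intended one: the paper states this as a corollary of Lemma \ref{compactlemma} with no separate proof, and both clauses follow by precisely the finite-compatibility unpacking you give. The argument is correct.
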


   We recall that for an regular  cardinal $\nu$, a poset $\mathbb R$ is {\em strongly $\nu^+$-cc} if and only if  
   the following condition holds: for every $\nu^+$-sequence $\langle r_i : i < \nu^+ \rangle$ of conditions in $\mathbb R$,
   there exist a club set $E \subseteq \nu^+$ and a regressive function $f$ on $E \cap \cof(\nu)$
   such that for all $i$ and $j$, if $f(i) = f(j)$ then $r_i$ is compatible with $r_j$.

\begin{lemma} $\mathbb Q$ is strongly $\kappa^+$-cc. 
\end{lemma}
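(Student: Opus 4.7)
Given a $\kappa^+$-sequence $\langle r_i : i<\kappa^+\rangle$ with $r_i = (A^i, B^i, (\rho^i, a^i, b^i), f^i)$, my plan is to encode every component of $r_i$ other than the upper part $B^i$ regressively on $E\cap\cof(\kappa)$ for a suitable club $E$, and then amalgamate the upper parts using the filter property of $\mathcal F$. The guiding observation is that Lemma~\ref{Pcclemma} already shows that any two upper parts admit a common lower bound, so the $B^i$ cannot cause incompatibility; this is essential because the $B^i$ live in a set of size $2^\kappa = \kappa^{+n}$ which may exceed $\kappa^+$ and hence cannot be coded by a regressive function.

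For the coding, note that $\kappa$ measurable implies $|V_\kappa|=\kappa$, so $A^i\in V_\kappa$ ranges over at most $\kappa$ values. Since $|\bigcup_i b^i|\le\kappa^+$, fix an injection $\pi$ of this union into $\kappa^+$ and set $\bar b^i=\pi[b^i]$, letting $\bar f^i$ be the corresponding relabelling of $f^i$. Using $\kappa^{<\kappa}=\kappa$ we have $|[\kappa^+]^{<\kappa}|=\kappa^+$, so fix a bijection $\Psi:[\kappa^+]^{<\kappa}\to\kappa^+$ and let $E\subseteq\kappa^+$ be the club of $i$ with $\Psi(a)<i$ for every $a\in [i]^{<\kappa}$. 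On $E\cap\cof(\kappa)$, $|a^i|,|\bar b^i|<\kappa=\cf(i)$ forces $a^i,\bar b^i\subseteq i$, and after suitable G\"odel pairing the tuple $(A^i,\rho^i,a^i,\bar b^i,\bar f^i)$ is coded by an ordinal $F(i)<i$. If $F(i)=F(j)$ then all five components coincide, so in particular $b^i=b^j$ (since $\pi$ is injective) and $f^i=f^j$; the conditions $r_i$ and $r_j$ then differ only in their upper parts.

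To amalgamate such $r_i,r_j$, mimic the proof of Lemma~\ref{Pcclemma}: pick an upper part $B^*$ with $[B^*]_{U_0}\le[B^i]_{U_0}\wedge[B^j]_{U_0}$, then restrict $\dom(B^*)$ so that $B^*(\alpha)\le B^i(\alpha),B^j(\alpha)$ on its domain and $\min\dom(B^*)>\sup\dom(A^i)$. Set $r^* = (A^i,B^*,(\rho^i,a^i,b^i),f^i)$. Because $r^*$ has the same combinatorial data as $r_i$ with a stronger upper part, and $(s,B^*)\le(s,B^i)$ preserves forced statements, Clause~(\ref{crucialclause}) for $r^*$ reduces to the corresponding clause for $r_i$; hence $r^*$ is a condition, and by inspection $r^*\le r_i,r_j$, so $r_i$ and $r_j$ are compatible. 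The main difficulty, that the upper parts range over $2^\kappa$ values and so cannot be matched regressively, is bypassed in exactly this way by the filter property of $\mathcal F$.
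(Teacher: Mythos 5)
Your proposal has a fatal gap in the coding step. You assert that for $i \in E \cap \cof(\kappa)$, the condition $\vert a^i \vert, \vert \bar b^i \vert < \kappa = \cf(i)$ forces $a^i, \bar b^i \subseteq i$. This is a non-sequitur: $a^i$ and $\bar b^i$ are subsets of $\kappa^+$, not of $i$, and a set of size $<\kappa$ in $\kappa^+$ can perfectly well contain elements $\geq i$ (e.g.\ $a^i = \{i\}$). Cofinality only bounds a small subset of $i$ below $i$, it does not push arbitrary small subsets of $\kappa^+$ into $i$. Consequently the tuple $(A^i,\rho^i,a^i,\bar b^i,\bar f^i)$ is not, in general, an object computable from parameters below $i$, so it cannot be coded by a regressive function, and your conclusion that $F(i)=F(j)$ yields $a^i=a^j$, $b^i=b^j$, $f^i=f^j$ does not follow. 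This is not a small fixable detail: with $\kappa^+$-many conditions there is no hope of collapsing the $a^i$ to a bounded family, so the proof \emph{must} handle pairs with $a^i \neq a^j$ and $b^i \neq b^j$.

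The paper does exactly that. It codes only $F_1(i)=a^i\cap i$ and $F_2(i)=\{\jmath<i:\beta_\jmath\in b^i\}$, and the club $E_1$ is chosen so that for $i'<i$ in $E_1$ one has $a^{i'}\subseteq i$; combined with $F(i')=F(i)$ this gives a $\Delta$-system-like picture where $a^{i'}\cap a^i = r_0$ is a common initial segment and similarly for the $b$'s. The amalgamated condition then genuinely mixes data from $a^{i'}\setminus r_0$ and $a^i\setminus r_0$, and verifying Clause~\ref{crucialclause} for it is where all the real work is: one needs (i) the tree structure, since $f^i_{\eta,\beta}(\zeta)$ carries the branch segment $x_\beta\restriction\zeta$, which is what lets one link a value across different conditions via the ``catch-up'' clauses in $E_1$; (ii) the canonization data $F_5$ built from Lemma~\ref{canonize}, which reduces the $\mathbb P$-names $\dot{\mathcal G}_\beta$ to bounded-product names $\dot y^i_{\beta,\vec\alpha}$ so that agreement of these names is a $V_\kappa$-sized piece of data that can be matched regressively; and (iii) the cloning argument of Subcase~2b, where a ``new'' index $\beta\in b^i\setminus b^{i'}$ is replaced by its clone $\bar\beta\in b^{i'}$ occupying the same position, and one argues through $\bar\beta$ using Clause~\ref{crucialclause} for $q^{i'}$. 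None of this machinery appears in your sketch because you believed you had forced $b^i=b^j$. Your observation that the upper parts $B^i$ cannot be matched regressively and must be amalgamated via the filter is correct and is also used in the paper's proof, but it is the easy half of the argument.
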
 

\begin{proof} Let $q^i = (A^i, B^i, t^i, f^i) \in {\mathbb Q}$  for $i < \kappa^+$, and let
   $t^i = (\rho^i, a^i, b^i)$.

    We recall that $\dom(f^i_{\eta, \beta}) = a^i$ for all $\eta < \rho^i$ and $\beta \in b^i$.
    Let $\mu^i = \ot(a^i)$.
    Let $\dot x^i_{\beta}$ be a $\mathbb P$-name for the set of pairs $(\nu, \nu')$
    such that $\zeta \dot {\mathcal G}_\beta   \zeta'$, where $\zeta$ and $\zeta'$ are respectively the
    $\nu^{\rm th}$ and $\nu'^{\rm th}$ elements of $a^i$.

    Appealing to Lemma \ref{canonize} and Property I we may assume, shrinking $B^i$ if necessary,
    that for every $\beta \in b^i$ there exist an ordinal $\gamma^i_{ \beta} < \kappa$
    and names $\dot y^i_{ \beta, \vec \alpha}$ for every increasing finite sequence $\vec \alpha$ of ordinals
    from $\ssup(\dom(A^i))$,
    such that $B^i$ ``reduces''   $\dot x^i_{\beta}$ to $\dot y^i_{ \beta, \vec \alpha}$ which is a 
    name in the product of collapses ${\mathbb D}(\vec \alpha^\frown \langle \gamma^i_{ \beta} \rangle)$
 for the edge set of a graph on
    the vertex set $\mu^i$.

    We will enumerate $\bigcup_{i < \kappa^+} b^i$ as $\langle \beta_\jmath : \jmath < \kappa^+ \rangle$.
    To make the rest of the proof more readable, we will observe the following notational
    conventions:
\begin{enumerate}
\item The letter $i$ and  its typographic variations will denote indices for conditions in $\mathbb Q$ on the sequence
$\langle q^i : i < \kappa^+ \}$.
\item  The letter $\zeta$ and its  variations will denote elements of
    $\bigcup_{i < \kappa^+} a^i$, and the letter $\beta$ and its variations will denote elements of
    $\bigcup_{i < \kappa^+} b^i$.
\item The letter $\jmath$ and its variations will denote indices for ordinals less than $\lambda$ on the 
   sequence $\langle \beta_\jmath : \jmath < \kappa^+ \rangle$.
\item
 Given a set $x \subseteq \kappa^+$ with $\vert x \vert < \kappa$, the letter $\sigma$ and its variations
   will denote indices for elements of $x$, enumerated in increasing order.
\item  Given a set $y \subseteq \bigcup_{i < \kappa^+} b^i$ with $\vert y \vert < \kappa$, the letter $\tau$
   and its variations will denote indices for elements of $\{ \jmath : \beta_\jmath \in y \}$, again enumerated
   in increasing order. Note that variations of $\tau$ denote indices (in $\kappa$) for indices (in $\kappa^+$)
   for elements of $\lambda$.
\item  The letter $\phi$ and its variations will denote indices for elements $t \in Lev_\zeta(T)$ on the sequence
     $\langle t(\zeta, \phi) : \phi < \vert Lev_\zeta(T) \rangle$.
\item  The letter $\psi$ and its variations will denote the second entries in pairs drawn from $T \times \kappa$.
\end{enumerate}

    We define functions $F_n$  with domain $\kappa^+$ for $n < 6$  as follows:
\begin{enumerate}

\item $F_0(i) = (\rho^i, \ot(a^i), \ot(\{ \jmath: \beta_{\jmath} \in b^i \}))$.  

\item $F_1(i) = a^i \cap i$.

\item $F_2(i) = \{ \jmath < i : \beta_{\jmath} \in b^i \}$.

\item $F_3(i) = A^i$.

\item $F_4(i)$ is the set of $5$-tuples
   $(\eta, \sigma, \tau, \phi, \psi)$ where
    $\eta < \rho^i$, $\sigma < \ot (a^i)$, $\tau < ot(\{ \jmath : \beta_{\jmath} \in b^i \})$, $\phi < i$, $\psi < \kappa$, 
    and if we let $\zeta$ be the $\sigma^{\rm th}$ element of $a^i$ and 
    $\beta = \beta_{\jmath}$ for $\jmath$ the $\tau^{\rm th}$ element of $\{ \jmath : \beta_{\jmath} \in b^i \}$
   then  $f^i_{\eta, \beta}(\zeta) = (t(\zeta, \phi), \psi)$.

\item $F_5(i)$ is the set of $3$-tuples $(\tau, \gamma, Y)$
   where $\tau < ot(\{ \jmath : \beta_{\jmath} \in b^i \})$,  $\gamma < \kappa$, $Y \in V_\kappa$, 
   and if we let $\beta = \beta_{\jmath}$ for $\jmath$ the $\tau^{\rm th}$ element of $\{ \jmath : \beta_{\jmath} \in b^i \}$
   then $\gamma = \gamma^i_{ \beta}$, and  $Y$ is the function specified by
   setting $Y(\vec \alpha) = {\dot y}^i_{ \beta, \vec \alpha}$ for each increasing
   finite sequence $\vec \alpha$ from $\ssup(\dom(A^i))$.

\end{enumerate}

\begin{remark} $F_4(i)$ is best viewed as a partial function on
   triples $(\eta, \sigma, \tau)$ which records a code for the value of
   $f^i_{\eta, \beta}(\zeta)$ 
 when this is ``permissible''.
 The criterion for permissibility is
   that (after decoding $\sigma$ and $\tau$ to obtain $\zeta$ and $\beta$)
  the first entry ($x_\beta \restriction \zeta$) in $f^i_{\eta, \beta}(\zeta)$ 
  is enumerated 
   before $i$ in the enumeration of level $\zeta$ of the tree $T$.
 The point is that we are aiming ultimately to define a regressive function so
   we can only record limited information.

  In a similar vein, $F_5$ is a total function which records values of
  $\gamma^i_{ \beta}$ and ${\dot y}^i_{ \beta, \vec \alpha}$.
\end{remark} 

   Now let $F(i) = (F_0(i), F_1(i), F_2(i), F_3(i), F_4(i), F_5(i))$, so that  
\[
   F(i) \in
  \kappa^3 \times [i]^{<\kappa} \times [i]^{<\kappa} \times V_\kappa \times [\kappa^3 \times i \times \kappa]^{<\kappa} \times
  [\kappa^2 \times V_\kappa]^{<\kappa}.
\]
   We fix an injective map $H$ from 
\[
   \kappa^3 \times [\kappa^+]^{<\kappa} \times [\kappa^+]^{<\kappa} \times V_\kappa \times [\kappa^3 \times \kappa^+ \times \kappa]^{<\kappa} \times
  [\kappa^2 \times V_\kappa]^{<\kappa}
\]
   to $\kappa^+$. Since $\kappa^{<\kappa} = \kappa$, we may fix a club set $E_0 \subseteq \kappa^+$ such that
   if $i \in E_0 \cap \cof(\kappa)$ then
\[
   \rge(H \restriction 
   \kappa^3 \times [i]^{<\kappa} \times [i]^{<\kappa} \times V_\kappa \times [\kappa^3 \times i \times \kappa]^{<\kappa} \times
  [\kappa^2 \times V_\kappa]^{<\kappa})
   \subseteq i,
\]
   so that in particular $H \circ F$ is regressive on $E_0 \cap cof(\kappa)$. 

   Let $E_1$ be the club subset of $\kappa^+$ consisting of those $i$ such that for all $i' < i$:  
\begin{enumerate} 
\item \label{abit}  $a^{i'} \subseteq i$.
\item \label{bbit} $\{ \jmath : \beta_{\jmath} \in b^{i'} \} \subseteq i$.
\item \label{catchupone} For all   $\zeta \in a^{i'}$ and $\beta \in b^{i'}$, $x_\beta \restriction \zeta = t(\zeta, \phi)$ for
    some $\phi < i$.
\item \label{catchuptwo} For all $\beta, \beta^* \in b^{i'}$ with $\beta \neq \beta^*$,
    $x_\beta \restriction i \neq x_{\beta^*} \restriction i$. 
\end{enumerate} 

    We claim that the  function $H \circ F$ and the club set $E_0 \cap E_1$
    serve as a witness to the strong $\kappa^+$-cc for
   $\mathbb Q$. To see this, let  $i' < i$ be points in $E_0 \cap E_1 \cap cof(\kappa)$ such that $F(i') = F(i)$.
   We will show that $q^{i'}$ and $q^{i}$ are compatible.

   We start by decoding the assertion that $F_n(i') = F_n(i)$ for $n < 4$.   Directly from the definition we see that:
\begin{enumerate}
\item $\rho^{i'} = \rho^i = \rho^*$ say.
\item $\ot(a^{i'}) = \ot(a^i) = \mu^*$ say.
\item  $\ot(\{ \jmath : \beta_{\jmath} \in b^{i'} \}) = \ot(\{ \jmath : \beta_{\jmath} \in b^i \}) = \epsilon^*$ say.
\item $a^{i'} \cap i' = a^i \cap i = r_0$ say. Since $a^{i'} \subseteq i$ by Clause \ref{abit}) in the definition
   of $E_1$, $a^{i'} \setminus {i'}$ and
    $a^{i} \setminus i$ are disjoint and $a^{i'} \cap a^i = r_0$. 
\item $\{ \jmath < {i'} : \beta_{\jmath} \in b^{i'} \} = \{\jmath < i : \beta_{\jmath} \in b^i \} = r_1$ say.
   As in the last claim,  $\{ \jmath \ge {i'} : \beta_{\jmath} \in b^{i'} \}$ and  $\{ \jmath \ge i : \beta_{\jmath} \in b^i \}$ are
   disjoint and $\{ \jmath : \beta_{\jmath} \in b^i \cap b^{i'} \} = r_1$. 
\item  $A^{i'} = A^i = A^*$ say.
\end{enumerate} 

\begin{claim} When both sides are defined, $f^{i'}_{\eta, \beta}(\zeta) =  f^{i}_{\eta, \beta}(\zeta)$.
\end{claim}

\begin{proof} We will use the fact that  $F_4(i') = F_4(i)$.
   Since both sides are defined,  $\eta < \rho^*$, $\zeta \in a^{i'} \cap a^{i}$ and 
   $\beta \in b^{i'} \cap b^{i}$. By the remarks in the preceding paragraph,
   $\zeta \in r_0$ and $\beta = \beta_{\jmath}$ for some $\jmath \in r_1$. 

   Since $r_0$ is the common initial segment of $a^{i'}$ and $a^{i}$, we have
   $\ot(a^{i'} \cap \zeta) = \ot(a^{i} \cap \zeta) = \sigma$ say. Similarly
   $\jmath$ has the same index (say $\tau$) in the increasing enumerations of
   $\{ \jmath : \beta_{\jmath} \in b^{i'} \}$ and   $\{ \jmath : \beta_{\jmath} \in b^i \}$.
     
   Now let $f^{i'}_{\eta, \beta}(\zeta) = (x_\beta \restriction \zeta, \psi')$,
   let $f^{i}_{\eta, \beta}(\zeta) = ( x_\beta \restriction \zeta, \psi)$,
   and let $x_\beta \restriction \zeta = t(\beta, \phi)$.
   Since $i \in E_1$, $i' < i$, $\beta \in b^{i'}$ and $\zeta \in a^{i'}$,
   it follows from Clause \ref{catchupone}) in the definition of $E_1$ that  $\phi < i$.
  
   By the definition of $F_4$,
   the set $F_4(i)$ contains the tuple $(\eta, \sigma, \tau, \phi, \psi)$.
    This is the
   unique tuple in $F_4(i)$ which begins with $(\eta, \sigma, \tau)$, and since $F_4(i') = F_4(i)$ this tuple also appears in $F_4(i')$.
   It follows that $\psi = \psi'$ and so 
   $f^{i'}_{\eta, \beta}(\zeta) = f^{i}_{\eta, \beta}(\zeta)$.
\end{proof}    
%
%
%
%
%
%
%

 We will now define $q^* = (A^*, B^*, t^*, f^*)$, which will be a lower bound for $q^i$ and $q^{i'}$.

\begin{itemize}
\item Recall that $\rho^* = \rho^{i'} = \rho^{i}$. We set $a^* = a^{i'} \cup a^i$,   $b^* = b^{i'} \cup b^{i}$,
 $t^* = (\rho^*, a^*, b^*)$. 
\item Recall that $A^* = A^{i} = A^{i'}$. 
\item Let $B^*$ be some upper part such that $B^* \le B^{i'}, B^{i}$.
\item We define $f^*_{\eta, \beta}(\zeta)$ for all $\eta < \rho^*$,  $\zeta \in a^*$ and $\beta \in b^*$.
 Naturally we set $f^*_{\eta, \beta}(\zeta) = f^{i'}_{\eta, \beta}(\zeta)$ when
  $\zeta \in a^{i'}$ and $\eta \in b^{i'}$, and similarly we set
  $f^*_{\eta, \beta}(\zeta) = f^{i}_{\eta, \beta}(\zeta)$ when $\zeta \in a^{i}$ and $\eta \in b^{i}$.
  As we argued already the sequences $f^{i'}$ and $f^{i}$ agree sufficiently for this to make sense.

  To define $f^*_{\eta, \beta}(\zeta)$ for $\eta < \rho^*$ and 
  $(\zeta, \beta) \in a^* \times b^* \setminus (a^i \times b^i \cup a^{i'} \times b^{i'})$,
  we will proceed as in the proof of Lemma \ref{compactlemma}. That is to say we will choose
  suitable values  whose second coordinates are distinct from each other, and also
  distinct from any value appearing as a second coordinate of  
  $f^*_{\eta, \beta}(\zeta)$ for $\eta < \rho^*$ and $(\zeta, \beta) \in  a^i \times b^i \cup a^{i'} \times b^{i'}$.

%
\end{itemize}

  It is routine to check that if $q^*$ is a condition then it is a common refinement of $q^{i'}$ and $q^{i}$,
  and also that $q^*$ satisfies all the clauses in the definition of $\mathbb Q$ except possibly for
  the final Clause \ref{crucialclause}).  With a view to verifying this clause, suppose that  $\eta \in \dom(A^*) \cap \rho^*$,
   $s$ is a lower part harmonious with $A^*$ past $\eta$,
   and 
\[
   f^*_{\eta, \beta'}(\zeta') = f^*_{\eta, \beta}(\zeta') \neq f^*_{\eta, \beta'}(\zeta) = f^*_{\eta, \beta}(\zeta).
\]
   where $\zeta' < \zeta$ and $\beta' = \beta_{\jmath'}$, $\beta = \beta_{\jmath}$ for some $\jmath' < \jmath$.  

  By the construction of $f^*$, it is immediate that all four of the pairs
  in $\{ \zeta', \zeta \} \times \{ \beta', \beta \}$
   lie in the set $a^{i'} \times b^{i'} \cup a^{i} \times b^{i}$. 

 If all four pairs above 
 lie in $a^{i'} \times b^{i'}$, then we are done by Clause \ref{crucialclause}) for $q^{i'}$ and the
  fact that $B^* \le B^{i'}$. A similar argument works if all four pairs lie in 
  $a^{i} \times b^{i}$. From this point we assume that we are not in either of these cases. 

%

   Now recall that $a^* = a^{i'} \cup a^{i} = r_0 \cup (a^{i'} \setminus r_0) \cup (a^{i} \setminus r_0)$,
   where $r_0 < a^{ i'} \setminus r_0 < a^{i} \setminus r_0$. Similarly if we
   let $s = \{ \jmath : \beta_{\jmath} \in b^{i'} \cup b^{i} \}$, then
   $s = r_1 \cup ( \{ \jmath : \beta_{\jmath} \in b^{i'} \} \setminus r_1 ) 
            \cup ( \{ \jmath : \beta_{\jmath} \in b^{i} \} \setminus r_1 )$,
   where  
   $r_1 <   \{ \jmath : \beta_{\jmath} \in b^{i'} \} \setminus r_1 <  \{ \jmath : \beta_{\jmath} \in b^{i} \} \setminus r_1$.
  
\begin{figure}[h]

\begin{center}

\resizebox{4in}{!}{\input 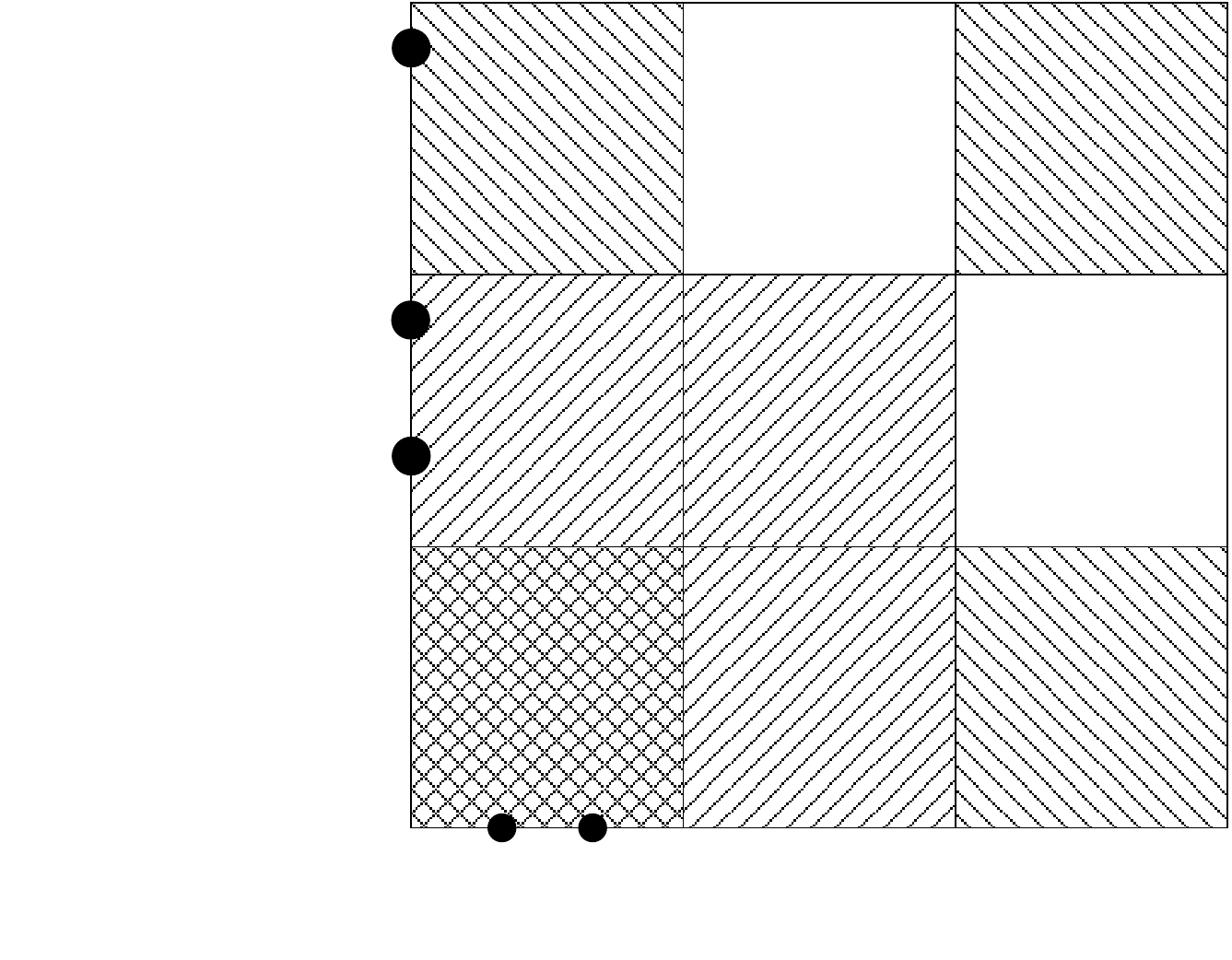_t} 

\caption{Subcase 2b: $\bar\jmath$ is the ``clone'' of $\jmath$.}

\label{myfigure}

\end{center}

\end{figure}

   An easy case analysis
\footnote{In figure \ref{myfigure}, all pairs $(\zeta^*, \jmath^*)$ with $(\zeta^*, \beta_{\jmath^*}) \in a^{i'} \times b^{i'}$
    lie in the region shaded with forward-sloping diagonal lines, and 
   all pairs $(\zeta^*, \jmath^*)$ with $(\zeta^*, \beta_{\jmath^*}) \in a^{i} \times b^{i}$
     lie in the region shaded with backward-sloping diagonal lines.
    Points in $\{ \zeta' \zeta \} \times \{ \jmath', \jmath \}$ must
   all lie in the shaded region, and must not all lie in subregions shaded in a single direction.} shows that 
    there are only two possibilities:

\smallskip
  
\noindent {\bf Case 1:} $\jmath'$ and $\jmath$ are both in $r_1$, $\zeta' \in a^{i'} \setminus r_0$,
     $\zeta \in a^i \setminus r_0$.

\smallskip

\noindent {\bf Case 2:} $\zeta'$ and $\zeta$ are both in $r_0$, $\jmath' \in b^{i'} \setminus r_1$,
     $\jmath \in b^i \setminus r_1$.

\smallskip

   We will first show that Case 1 is not possible.
   To dismiss Case 1, assume that we are in this case and recall that 
\[
   f^*_{\eta, \beta'}(\zeta') = f^*_{\eta, \beta}(\zeta') \neq f^*_{\eta, \beta'}(\zeta) = f^*_{\eta, \beta}(\zeta),
\]
  from which it follows that $x_{\beta'} \restriction \zeta = x_\beta \restriction \zeta$. 
  Since $i \in E_1$, $i' < i$ and  $\beta', \beta \in b^{i'}$,
  it follows from clause \ref{catchuptwo} in the definition of $E_1$  that
  $x_{\beta'} \restriction i \neq x_\beta \restriction i$. By the case assumption 
  we have $\zeta \in a^i \setminus r_0 = a^i \setminus (a^i \cap i)$, so that
  in particular $\zeta \ge i$. This is a contradiction, so Case 1 does not occur.

   We now assume that we are in Case 2. In order to use the information coded in the
   equality of $F(i')$ and $F(i)$, we make some definitions:
\begin{itemize}
\item  $\sigma'$ is the index of $\zeta'$ in the increasing enumeration of $a^{i'} \cap a^i$.
\item  $\sigma$ is the index of $\zeta$ in the increasing enumeration of $a^{i'} \cap a^i$.
\item  $\tau'$ is the index of $\jmath'$ in the increasing enumeration of
    $\{ \jmath : \beta_{\jmath} \in a^{i'} \}$.
\item  $\tau$ is the index of $\jmath$ in the increasing enumeration of
    $\{ \jmath : \beta_{\jmath} \in a^i \}$.
\end{itemize}

  By definition, $F_5(i')$ (which is equal to $F_5(i)$) contains the tuples
 $(\tau', \gamma^{i'}_{ \beta'}, Y')$ and
 $(\tau,  \gamma^{i}_{ \beta},    Y)$, where
 $Y'(\vec \alpha) = {\dot y}^{i'}_{ \beta', \vec \alpha}$ 
 and $Y(\vec \alpha) = {\dot y}^i_{ \beta, \vec \alpha}$
  for $\alpha$ any increasing finite sequence from $\ssup(\dom(A))$.

  Recall that $s$ is a lower part harmonious with $A^*$ past $\eta$,
  and $\eta \in \dom(A^*) \cap \rho^*$. Let 
\[
   s =  (p_0, \alpha_1 , p_1, \ldots, \alpha_k, p_k),
\]
  and let $\vec \alpha = \langle \alpha_1, \ldots, \alpha_k \rangle$.
  By the choice of $B^{i'}$ and the definitions of the names ${\dot x}^{i'}_{\beta'}$ and
  ${\dot y}^{i'}_{ \beta', \vec \alpha}$, $(s, B^{i'})$ reduces the truth
  value of $\zeta' {\dot {\mathcal G}}_{\beta'} \zeta$ 
  (a Boolean value for $\mathbb P$) to the truth value of
  $\sigma'  {\dot y}^{i'}_{ \beta', \vec \alpha} \sigma$ 
  (a Boolean value for the product of collapses
     ${\mathbb D}(\vec \alpha^\frown  \gamma^{i'}_{ \beta'})$).
  Similarly $(s, B^{i'})$ reduces the truth value of
  $\zeta'  {\dot {\mathcal G}}_\beta \zeta$ to the truth value of
  $\sigma' {\dot y}^{i}_{ \beta, \vec \alpha} \sigma$. 

\smallskip

\noindent {\bf Subcase 2a:} $\tau' = \tau$. 

\smallskip

   In this subcase $\gamma^{i'}_{ \beta'} = \gamma^{i}_{ \beta} = \gamma^*$ say, and
   $Y' = Y$, 
   so that in particular ${\dot y}^{i'}_{ \beta', \vec \alpha} = {\dot y}^i_{ \beta, \vec \alpha}$.
   It is then immediate from the preceding discussion that, since $(s, B^*)$ is a common refinement of  $(s, B^{i'})$ and  $(s, B^i)$,
    $(s, B^*) \forces \zeta' {\dot {\mathcal G}}_{\beta'}  \zeta  
    \iff \zeta' {\dot {\mathcal G}}_{\beta}  \zeta$.  

\smallskip

\noindent {\bf Subcase 2b:} $\tau' \neq \tau$.    

\smallskip

    In this subcase we will consider a ``cloned'' version $\bar \beta$ of $\beta$ lying in
    $b^{i'}$, which we define by setting $\bar \beta = \beta_{\bar \jmath}$ for $\bar \jmath$ the
    element with index $\tau$ in the increasing enumeration of 
    $\{  \jmath : \beta_{\jmath} \in b^{i'} \}$. 
    The argument from subcase 2a shows that 
    $(s, B^*) \forces
 \zeta' {\dot {\mathcal G}}_{\bar\beta}  \zeta  
    \iff \zeta' {\dot {\mathcal G}}_{\beta}  \zeta$.


    Since $\beta' \in b^{i'}$ and $\zeta' \in a^{i'}$,
    and also $i' < i$ and $i \in E_1$, it follows
    from Clause \ref{catchupone} in the definition of $E_1$ 
    that  $x_{\beta'} \restriction \zeta' = t(\zeta', \phi)$
    for some $\phi < i$. Now since   $f^{i'}_{\eta, \beta'}(\zeta') = f^i_{\eta, \beta}(\zeta')$,
    $x_{\beta'} \restriction \zeta' = x_\beta \restriction \zeta = t(\zeta', \phi)$, so that
    the set $F_4(i)$ contains some tuple $(\eta, \sigma', \tau, \phi, \psi)$
    coding the statement ``$f^i_{\eta, \beta}(\zeta') = (t(\zeta', \phi), \psi)$''.
    This tuple is also in $F_4(i')$, and decoding its meaning we find that
    $f^{i'}_{\eta, \bar\beta}(\zeta') = (t(\zeta', \phi), \psi) = f^i_{\eta, \beta}(\zeta')$.
    Since $\zeta \in a^{i'}$ also, a similar argument shows that
    $f^{i'}_{\eta, \bar\beta}(\zeta)  = f^i_{\eta, \beta}(\zeta')$. 

    So now we have $f^{i'}_{\eta, \beta'}(\zeta') = f^{i'}_{\eta, \bar\beta}(\zeta')$
    and $f^{i'}_{\eta, \beta'}(\zeta') = f^{i'}_{\eta, \bar\beta}(\zeta')$, so
    that (by Clause \ref{crucialclause})  for the condition $q^{i'}$) 
  $(s, B^{i'})
\forces
 \zeta' {\dot {\mathcal G}}_{\beta'}  \zeta  
    \iff \zeta' {\dot {\mathcal G}}_{\bar\beta}  \zeta$.

   So  $(s, B^*)
\forces
 \zeta' {\dot {\mathcal G}}_{\beta'}  \zeta  
    \iff \zeta' {\dot {\mathcal G}}_{\beta}  \zeta$.  
 and we are done.

\end{proof} 

\section{The main construction} \label{mainsection}


    We will start with a model $V_0$ in which GCH holds and $\kappa$ is supercompact. In this model
    we define in the standard way \cite{Laver} a ``Laver preparation'' forcing $\mathbb L$, and let
    $V_1 = V_0[G_0]$ where $G_0$ is $\mathbb L$-generic over $V_0$.
    Let $\mathbb A$ be the poset $Add(\kappa^+, \kappa^{+3})^{V_1}$, let
    $G_1$ be $\mathbb A$-generic over $V_1$, and let $V = V_1[G_1]$.

    Let  $T$ be the complete binary tree of height $\kappa^+$
    as defined in $V$. Clearly $T$ has $\kappa^{+3}$ branches and every level of $T$ has size
    at most $\kappa^+$. For use later we fix an enumeration 
    $\langle x_\beta : \beta < \kappa^{+3} \rangle$ of a set of distinct branches,
    and enumerations $\langle t(\alpha, i) : i < \vert Lev_\alpha(T) \vert \rangle$
    of the levels $Lev_\alpha(T)$ of $T$.
 
    Since $2^{\kappa^{+3}} = \kappa^{+4}$ in $V$, a theorem of Shelah \cite{Shelahdiamonds} implies that
    in $V$ we have $\diamondsuit_{\kappa^{+4}}(\cof(\kappa^{++}))$.  

    Working in $V$, we will define a forcing iteration with $<\kappa$-supports of length $\kappa^{+4}$.
    Each iterand ${\mathbb Q}_i$ will either be trivial forcing or will be $\kappa$-closed, parallel countably closed
    (in the sense of Corollary \ref{compactcorollary}) and strongly
    $\kappa^+$-cc. By a suitable adaptation of arguments of Shelah \cite{ShFA},
    this is sufficient to show that the whole iteration will be $\kappa$-directed closed
    and strongly $\kappa^+$-cc. We refer the reader to \cite{5authors} for a detailed account
    of the chain condition proof, noting (for the experts) that the property ``parallel countably closed''
    follows from the property ``countably closed plus well-met'' used in \cite{ShFA} and is 
    sufficient to make the proof from that paper work.

 The cardinality of the final iteration ${\mathbb Q}^*$ will
    be $\kappa^{+4}$.  We will have $2^\kappa = \kappa^{+4}$ in $V^{{\mathbb Q}^*}$, 
    while $2^\kappa = \kappa^{+3}$ in the intermediate models of the the iteration.  
    We note that by the closure of ${\mathbb Q}^*$,
    the terms ``$V_\kappa$'' and ``$P_\kappa \mu$'' have the same meanings in
    $V$, $V^{{\mathbb Q}^*}$, and every intermediate model. 

    As we build the iteration ${\mathbb Q}^*$, we will also (using the diamond from $V$) construct a sequence 
    of names ${\dot {\mathbb S}}_i$ such that
\begin{itemize}
\item ${\dot {\mathbb S}}_i$ is a ${\mathbb Q}^* \restriction i$-name for every $i < \kappa^{+4}$.
\item ${\dot {\mathbb S}}_i$ names a pair $(W_i, F_i)$ where $W_i \subseteq P(\kappa)$, $F_i$ is a family
    of partial functions from $\kappa$ to $V_\kappa$, and $\dom(H) \in W_i$ for all $H \in F_i$.
\item  If $G^*$ is ${\mathbb Q}^*$-generic, and $(W, F) \in V[G^*]$ with $W \subseteq P(\kappa)$ 
   and $F$ a family of functions from sets in $W$ to $V_\kappa$, then
 \[
    \{ i \in \kappa^{+4} \cap \cof(\kappa^{++}) :
    \mbox{$W \cap V[G^* \restriction i] = W_i$ and $F \cap V[G^* \restriction i] = F_i$} \} 
\]
   is stationary in $V[G^*]$.
\end{itemize}

    This is possible because:
\begin{itemize} 
\item Pairs $(W, F)$ as above in the extension by ${\mathbb Q}^*$ may be coded as subsets of
   $\kappa^{+4}$, and names for them may be coded as subsets of ${\mathbb Q}^* \times \kappa^{+4}$. 
\item If we enumerate the conditions in ${\mathbb Q}^*$ as $\langle q_j : j < \kappa^{+4} \rangle$, then 
    ${\mathbb Q}^* \restriction i = \{ q_j : j < i \}$ for almost all $i \in \kappa^{+4} \cap cof(\kappa^{++})$.  
\item ${\mathbb Q}^*$ preserves stationary subsets of $\kappa^{+4}$, by virtue of being $\kappa^+$-cc.
\end{itemize} 
    We refer the reader to \cite{5authors} for a more detailed discussion of this kind of construction.

  We observe that by $\kappa^+$-cc, if $i < \kappa^{+4}$ with  $\cf(i) > \kappa$ then every subset
  of $V_\kappa$ in the extension by ${\mathbb Q^*} \restriction i$ is in the  
  extension by ${\mathbb Q^*} \restriction j$ for some $j < i$.
  We observe also that each of the properties I-III can be formulated as $\forall \exists$
  assertions about the power set of $V_\kappa$.

  The considerations in the last paragraph imply a crucial reflection statement for Properties I-III:
  If $G^*$ is ${\mathbb Q}^*$-generic,  in $V[G^*]$ we have a normal
  measure $U_0$ and filter $\mathcal F$ with properties I-III, and we set 
  $F = \{ h : [h]_U \in {\mathcal F} \}$, then for almost all $i$ with
  $\cf(i) > \kappa$ we have that:
\begin{itemize} 
\item $U_0 \cap V[G^* \restriction i]$ and $F \cap V[G^* \restriction i]$ are elements 
   of $V[G^* \restriction i]$.
\item In $V[G^* \restriction i]$, $U_0 \cap V[G^* \restriction i]$ is a normal measure and
   the functions in $F \cap V[G^* \restriction i]$ represent a filter with
   properties I-III. 
\end{itemize}

After these preliminaries we can specify the iterands ${\mathbb Q}_i$ of the iteration ${\mathbb Q}^*$.
 We   assume that $G^* \restriction i$ is ${\mathbb Q}^* \restriction i$-generic and that
 $(W_i, F_i)$ is the realisation of ${\dot {\mathbb S}}_i$, and work in $V[G^* \restriction i]$.
 We will set
 ${\mathbb Q}_i$ to be trivial forcing unless we have the conditions:
\begin{itemize}
\item  $\cf(i) = \kappa^{++}$.
\item  $W_i$ is a normal measure on $\kappa$.
\item  $\{ [h]_{W_i}: h \in F_i \}$ is an ultrafilter satisfying properties I-III.
\end{itemize}
  In this case we will let ${\mathbb Q}_i$  be the forcing $\mathbb Q$ from Section
  \ref{qsection}, defined in $V[G^* \restriction i]$ from the parameters $W_i$, $F_i$, $\langle x_\beta : \beta < \kappa^{+3} \rangle$,
  and a suitable enumeration  $\langle {\dot {\mathcal G}}^i_\beta : \beta < \kappa^{+3} \rangle$ of canonical names for graphs.  

   We recall from the Introduction we will ultimately force over
   $V[G^*]$ with a poset $\mathbb P$ of the type discussed in Section
   \ref{pforcing}. The forcing $\mathbb P$ will be defined from some normal measure $U_0$ and ultrafilter $\mathcal F$,
   and the point of the diamond machinery in the definition of ${\mathbb Q}_i$ is to
   anticipate the poset $\mathbb P$ (and in particular $\mathbb P$-names for
   graphs on $\kappa^+$). To be a bit more precise, we will actually anticipate
   $U_0$ and $F$ where  $F = \{ h : [h]_{U_0} \in {\mathcal F} \}$, or to put it another way
   $F$ is the set of upper parts for the poset $\mathbb P$.  

   Recall further from Section \ref{qsection} that in the case when
   ${\mathbb Q}_i$ is not trivial forcing,  part of the generic object for
   ${\mathbb Q}_i$ will be a partial function $h_i$ from $\kappa$ to
   $V_\kappa$ such that
\begin{itemize}
\item $\dom(h_i)$ is an unbounded set of inaccessible cardinals 
\item $\dom(h_i)$ is eventually contained in each measure one set for the measure $W_i$.
\item For all $h \in F_i$, $h_i(\alpha) \le h(\alpha)$ for all large enough $\alpha \in \dom(h)$.
\end{itemize}  

   The following Lemma will be used in Section \ref{graphsection} to show that often enough ${\mathbb Q}_i$ does 
   its job, by adding a $\mathbb P$-name for a graph which will absorb all
   graphs whose names lie in $V[G^* \restriction i]$.  

\begin{lemma}  Let $G^*$ be ${\mathbb Q}^*$-generic. Then in $V[G^*]$ there is an ultrafilter $U$ 
  on $P_\kappa \kappa^{+4}$ such that if $U_0$ is the projection of $U$ to $\kappa$,
   and  we perform the construction of Section \ref{constraintsection}
   to produce an ultrafilter ${\mathcal F} = Fil(H)$ for some $U$-constraint $H$, then there are stationarily many
   $i < \kappa^{+4}$ such that:
\begin{enumerate} 
\item  $U_0 \cap V[G^* \restriction i]=W_i$.
\item  $F \cap V[G^* \restriction i]=F_i$, where $F = \{ h : [h]_{U_0} \in {\mathcal F} \}$.
\item  $h_i \in F$.
\end{enumerate} 
\end{lemma}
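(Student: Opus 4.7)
The plan is a master-condition argument leveraging the Laver indestructibility of $\kappa$.

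Since the preparation of $V$ leaves $\kappa$ indestructibly supercompact under $\kappa$-directed closed forcing and ${\mathbb Q}^*$ is $\kappa$-directed closed, $\kappa$ is still $\kappa^{+4}$-supercompact in $V[G^*]$. I would fix a $\kappa^{+4}$-supercompactness embedding $\bar\jmath : V \to N$ in $V$ and lift it to $j : V[G^*] \to M = N[G^{**}]$ in $V[G^*]$ by the usual master-condition technique: a lower bound $q^* \in \bar\jmath({\mathbb Q}^*)$ for the directed set $\bar\jmath " G^*$ is produced via $\bar\jmath(\kappa)$-directed closure (combined with the usual term-forcing considerations to handle that $\bar\jmath " G^*$ sits in $V[G^*]$ rather than in $N$), and a generic $G^{**} \ni q^*$ over $N$ for $\bar\jmath({\mathbb Q}^*)$ is then built inside $V[G^*]$. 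Setting $U = \{X \subseteq P_\kappa \kappa^{+4} : j"\kappa^{+4} \in j(X)\}$ and taking its projection $U_0$ gives the structure of Section \ref{constraintsection}.

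The crucial step is the choice of the $U$-constraint $H$. For each $i < \kappa^{+4}$ at which ${\mathbb Q}_i$ is non-trivial, $j(h_i)(\kappa)$ is a non-zero element of ${\mathbb B}^M(\kappa, j(\kappa))$. Because $q^*$ is below $\bar\jmath(p)$ for every $p \in G^*$, it encodes information about each $h_i$; reading off its relevant coordinates I would extract a non-zero $\mathfrak{h}^* \in {\mathbb B}^M(\kappa, j(\kappa))$ satisfying $\mathfrak{h}^* \le j(h_i)(\kappa)$ for a stationary set of $i < \kappa^{+4}$. Let $H_0$ be any $U$-constraint with $[H_0]_U = \mathfrak{h}^*$, and use Lemma \ref{uflemma} to obtain $H \le_U H_0$ with $\Fil(H)$ an ultrafilter; by definition of $\le_U$, $[H]_U \le [H_0]_U = \mathfrak{h}^*$, so by Lemma \ref{naturalitylemma} we obtain $[h_i]_{U_0} \in \Fil(H) = {\mathcal F}$ whenever $\mathfrak{h}^* \le j(h_i)(\kappa)$, giving $h_i \in F$ for stationarily many $i$.

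For conditions 1 and 2, the diamond construction of $\langle \dot{\mathbb S}_i \rangle$ ensures that the set $S$ of $i \in \kappa^{+4} \cap \cof(\kappa^{++})$ with $W_i = U_0 \cap V[G^* \restriction i]$ and $F_i = F \cap V[G^* \restriction i]$ is stationary in $V[G^*]$ (using $\kappa^+$-cc to guarantee $U_0 \cap V[G^* \restriction i]$ and $F \cap V[G^* \restriction i]$ lie in $V[G^* \restriction i]$ on a club of $i$). Intersecting $S$ with the stationary set from the previous paragraph and with the tail of $\kappa^{+4}$ on which ${\mathbb Q}_i$ is non-trivial (so ${\mathbb Q}_i$ is the forcing $\mathbb Q$ defined from $(W_i, F_i)$), we obtain a stationary set on which all three conditions hold. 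The hardest part is the extraction of $\mathfrak{h}^*$: ${\mathbb B}^M$ is only $\kappa^{+m}$-closed, so one cannot naively infimate $\kappa^{+4}$ many non-zero elements and expect a non-zero outcome, and one must unpack $q^*$ through the iterand structure of $\bar\jmath({\mathbb Q}^*)$ to read off a single Boolean value simultaneously refining $j(h_i)(\kappa)$ on a stationary set.
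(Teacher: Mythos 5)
Your overall strategy (lift a supercompactness embedding via a master condition for ${\mathbb Q}^*$, define $U$ from the lifted map, then invoke Lemma \ref{naturalitylemma} to translate ``$[H]_U \le j(h_i)(\kappa)$'' into ``$h_i \in F$'') matches the paper's. But the core of the lemma is exactly the step you flag as ``hardest'' and then leave as a black box, and the black box cannot be filled the way you describe.

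The difficulty is not that $q^*$ fails to encode the $h_i$; it is that a master condition obtained generically by $\bar\jmath(\kappa)$-directed closure tells you nothing useful about how the stage-$j(i)$ component arranges $j(h_i)(\kappa)$. The values $j(h_i)(\kappa)$ are \emph{chosen} by the master condition at coordinate $j(i)$, and if chosen carelessly there is no reason for any common non-zero lower bound to exist across $\kappa^{+4}$ many stages: ${\mathbb B}^M(\kappa, j(\kappa))$ has only limited closure and the partial functions $h_i$ arising at different good stages need share no coherence. ``Reading off'' $\mathfrak h^*$ from $q^*$ is therefore circular --- the $\mathfrak h^*$ you want only exists if $q^*$ was built with foresight to make it exist. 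The paper handles this with a carefully interleaved construction of the master condition: at each non-trivial stage $i$ it \emph{asks}, below the current condition $(r_i,a_i,q_i)$, whether $\{j(h)(\kappa) : h\in F_i\}$ can be forced to have a non-zero lower bound; if so it extends to a condition forcing this, names the Boolean infimum $\mathfrak b$, and inserts $(\kappa,\mathfrak b)$ into $A^Q$ of the master condition $Q$ for $j({\mathbb Q}_i)$, so that $j(h_i)(\kappa)$ \emph{equals} that infimum rather than some uncontrolled value. The verification of Clause 6 for $Q$ is then a genuine argument (using that $\mathfrak b \le j(B^q)(\kappa)$ for each $q\in g_i$), not a formality. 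Only \emph{after} the whole $\kappa^{+4}$-length construction is finished are $U$, $H$, and $\mathcal F$ defined, and one then checks that for the diamond-good stationary set of $i$'s the ``ask'' step must have succeeded (since $[H]_U$ is a posteriori a non-zero lower bound and the Truth Lemma produces a witnessing condition below $(r_i,a_i,q_i)$), forcing $j(h_i)(\kappa) = \mathfrak b \ge [H]_U$.

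Two smaller points. First, you only assume $\bar\jmath$ witnesses $\kappa^{+4}$-supercompactness, but the master-condition recursion along an iteration of length $\kappa^{+4}$ with $<\kappa$ supports needs considerably more closure of the target model than that; the paper addresses this by going back to $V_0$ and taking $j$ witnessing $\mu$-supercompactness for $\mu$ ``very large'', arranging the Laver iteration so that ${\mathbb A}*{\mathbb Q}^*$ is exactly the stage-$\kappa$ iterand of $j({\mathbb L})$. Second, the diamond clauses (1) and (2) are indeed routine, but the interaction with (3) is not an afterthought to be obtained by intersecting stationary sets: without the conditional ``ask'' step, even on the diamond-good stationary set you have no handle on $j(h_i)(\kappa)$.
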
 

   Before starting the proof, we emphasise that the diamond property ensures that there many $i$ where the
   first two clauses are satisfied. What takes work is arranging that the the third clause 
   is also satisfied. 

\begin{proof} 

   We will construct $U$ as in the standard proof of Laver's indestructibility result \cite{Laver}, with the
   proviso that we will be very careful about the construction of the master condition.

   We begin by falling back to the initial model $V_0$, where we will choose an embedding
   $j : V_0 \rightarrow M$ with critical point $\kappa$ witnessing that $\kappa$ is $\mu$-supercompact for some very large $\mu$,
   and with the additional properties that the forcing poset ${\mathbb A} * {\mathbb Q}^*$ is the iterand at stage
   $\kappa$ in the iteration $j({\mathbb L})$, and that the least point greater than $\kappa$ in the
   support of the iteration $j({\mathbb L})$ is greater than $\mu$.  
   
   Recall that $V = V_0[G_0][G_1]$ where $G_0$ is ${\mathbb L}$-generic and $G_1$ is $\mathbb A$-generic. 
   By standard arguments, for any choice of a generic object $H_{\rm tail}$ for $j({\mathbb L})/G_0 * G_1 * G^*$
   over the model $V[G^*]$, we have $j`` G_0 \subseteq G_0 * G_1 * G^* * H_{\rm tail}$. 
   We may therefore  lift $j$ to obtain a generic embedding
   $j: V_0[G_0] \rightarrow M[G_0 * G_1 * G^* * H_{\rm tail}]$.  In order to lift further, we
   will need to construct master conditions.

   We will now work in $V[G^*]$ and perform a recursive construction of length $\kappa^{+4}$, choosing a decreasing sequence of conditions
   $(r_i, a_i, q_i)$
   with $(r_i, a_i, q_i)  \in j({\mathbb L})/(G_0 * G_1 * G^*)  * j({\mathbb A}) * j({\mathbb Q}^* \restriction i)$. We will arrange that
\[
    r_i \forces a_i \le j`` G_1,
\]
   so that forcing below $(r_i, a_i)$ we obtain $H_{\rm tail} * H_1$ such that $j$ can be lifted
   to $j: V_0[G_0][G_1] \rightarrow M[G_0 * G_1 * G^* * H_{\rm tail} * H_1]$. Keeping this in mind, we will
   also arrange that  
  
\[
    (r_i, a_i) \forces q_i \le j`` (G^* \restriction i).
\]

   Using the hypothesis that $j$ witnesses
   $\mu$-supercompactness and the remark that $G_1 \in M[j(G_0)]$, we may argue that
  for any choice of $H_{\rm tail}$ we have  $j`` G_1 \in M[G_0 * G_1 * G^* * H_{\rm tail}]$. 
   Since $j(\mathbb A)$ is $j(\kappa^+)$-directed closed we may find a ``strong master condition'' 
   $a \in j(\mathbb A)$ with $a \le j`` G_1$. We will therefore choose 
   $r_0$ to be the trivial condition in $j({\mathbb L})/(G_0 * G_1 * G^*)$, $a_0$ to be (a name
   for) a condition $a \in j(\mathbb A)$ with $a \le j`` G_1$, and $q_0$ as (a name for) the empty sequence.

   The limit stages are straightforward, since the choice of $\mu$ and  $j$ gives enough closure to
   take lower bounds.  
   If ${\mathbb Q}_i$ is trivial it is easy to define suitable $r_{i+1}$, $a_{i+1}$ and $q_{i + 1}$. so we assume that
   ${\mathbb Q}_i$ is non-trivial.
  
    Forcing below $(r_i, a_i, q_i)$ we can obtain a generic object $H_{\rm tail} * H_1 * H^*_i$ such that
    there is a lifted embedding $j:V[G^* \restriction i] \rightarrow M[j(G_0 * G_1) * H^*_i]$,
    where $j(G_0 * G_1) = G_0 * G_1 * G^* * H_{\rm tail} *  H_1$. 
    Let $g_i$ be the ${\mathbb Q}_i$-generic filter added at stage $i$ by $G^*$, and let    
    $h_i$ be the partial function from $\kappa$ to $V_\kappa$ added by $g_i$. 
 
   To take the next step, we ask whether it is possible that the set
    $\{ j(h)(\kappa) : h \in F_i \}$ has a non-zero lower bound: more formally, we ask whether
    there is a condition extending $(r_i, a_i, q_i)$ which forces this set to have a non-zero lower bound,
    and define $(r', a', q')$ to be such a condition if it exists and to be  
    $(r_i, a_i, q_i)$ otherwise. In the case that $(r', a', q')$ forces that 
    $\{ j(h)(\kappa) : h \in F_i \}$ has a non-zero lower bound, we let $\mathfrak b$ name the
    Boolean greatest lower bound for this set.  In either case we force below
    $(r', a', q')$, lift $j$ and work in $M[j(G_0 * G_1) * H^*_i]$ to define a condition
    in $j({\mathbb Q}_i)$. 

    Let  $\langle f^i_{\eta, \beta} : \eta < \kappa, \beta < \kappa^{+3} \rangle$ 
   be the family of functions added by $g_i$.
   We  define $Q = (A^Q, B^Q, t^Q, f^Q)$
   as follows:
\begin{itemize}

\item  $t^Q = \kappa \times j`` \kappa^+ \times j``\kappa^{+3}$.

\item  For all $\eta < \kappa$, $\alpha < \kappa^+$ and $\beta < \kappa^{+3}$,
   $f^Q_{\eta, j(\beta)}(j(\alpha)) = j(f^i_{\eta, \beta}(\alpha))$. 

\item   If the Boolean value $\mathfrak b$ is not defined,
   then:
\begin{enumerate}
\item  $A^Q = h_i$.
\item  $B^Q$ is some upper part such that $\kappa \cap \dom(B^Q) = 0$ and
    $B^Q \le j(B)$ for all upper parts $B \in F_i$.
\end{enumerate} 

       If $\mathfrak b$ is defined, then:
\begin{enumerate}
\item  $A^Q = h_i \cup \{ (\kappa, {\mathfrak b}) \}$.
\item  $B^Q$ is some upper part such that $(\kappa + 1) \cap \dom(B^Q) = 0$ and
    $B^Q \le j(B)$ for all upper parts $B \in F_i$.
\end{enumerate}

\end{itemize}

   In the case when the Boolean value $\mathfrak b$ is not defined, it is routine to check that
   $Q$ is a condition in $j({\mathbb Q}_i)$ and $Q \le j`` g_i$. This is essentially the argument of Lemma \ref{compactlemma}
   applied to the directed (hence linked) set $j`` g_i$. In the case that $\mathfrak b$ is 
   defined, the definition of $\mathfrak b$ ensures that we will still have $Q \le j`` g_i$
   so long as we can verify that $Q$ is a condition. As usual the only issue is Clause
   \ref{crucialclause}) in the definition of conditionhood. 

   So suppose that $\eta \in \dom(h_i) \cap \kappa$, 
\[
    f^Q_{\eta, j(\beta)}(j(\zeta)) = f^Q_{\eta, j(\beta')}(j(\zeta)) \neq f^Q_{\eta, j(\beta)}(j(\zeta')) = f^Q_{\eta, j(\beta')}(j(\zeta')),
\]
   and $y$ is a lower part which is harmonious with $A^Q$ past $\eta$. Let $x$ be the largest initial segment
   of $y$ which lies in $V_\kappa$, so that either $y = x$ or 
   $y = x^\frown \langle \kappa, p \rangle$ where $p \le {\mathfrak b}$. 

   By elementarity and the definition of $Q$,
\[
    f^i_{\eta, \beta}(\zeta) = f^i_{\eta, \beta'}(\zeta) \neq f^i_{\eta, \beta}(\zeta') = f^i_{\eta, \beta'}(\zeta').   
\]
   We now choose $q \in g_i$ such that $\eta < \rho^q$, $\zeta, \zeta' \in a^q$, $\beta, \beta' \in b^q$,
   and $\dom(A^q)$ contains every ordinal in $[\eta, \kappa)$ which is mentioned in $y$.
   It is easy to see that $x$ is harmonious with $A^q$ past $\eta$, and
   hence (as $q$ is a condition)
 $(x, B^q) \forces
   \zeta' {\dot G}_{\beta} \zeta \iff 
   \zeta' {\dot G}_{\beta'}  \zeta$.

   By elementarity
 $(x, j(B^q)) \forces
   j(\zeta')  j({\dot G}_{\beta}) j(\zeta) \iff 
   j(\zeta') j({\dot G}_{\beta'})  j(\zeta)$.
   To finish we just observe that by definition (and the choice of $\mathfrak b$
   in the case when it is defined, which ensures that
   $\mathfrak b \le j(B^q)(\kappa)$ ) $(y, B^Q) \le (x, j(B^q))$.

   Having chosen $Q$ as above, we let $r_{i+1}= r'$, $a_{i+1} = a'$, and 
   $q_{i+1}$ be the unique condition such that $q_{i+1} \restriction j(i) = q'$
   and $q_{i+1}(j(i)) = Q$.    
  

    At the end of the construction, we obtain  $(r^*, a^*, q^*) \in j({\mathbb L})/(G_0 * G_1 * G^*)  * j({\mathbb A})  * j({\mathbb Q}^*)$
  such that 
\[
    r^* \forces a^* \le j`` G_1,
\]
and
\[
    (r_i, a_i) \forces q^* \le j`` G^*.
\]
    Forcing below $(r^*, a^*, q^*)$ we obtain a generic object $H_{\rm tail} * H_1 * H^*$ and a lifted embedding
    $j: V[G] \rightarrow M[j(G_0 * G_1) * H^*]$.    
  Following the idea of the Laver construction we
    define $U = \{ A \in (P_\kappa \kappa^{+4})^{V[G]} : j`` \kappa^{+4} \in j(A) \}$.
  Since  $H_{\rm tail} * H_1 * H^*$ is generic over $V[G]$ for highly closed forcing we have 
   $U \in V[G]$, and so $U$ is an ultrafilter witnessing the $\kappa^{+4}$ supercompactness
    of $\kappa$ in $V[G]$. By the results in Section \ref{constraintsection}, we may use $U$ to define
    a $U$-constraint $H$ such that $\Fil(H)$ is an ultrafilter.  It is easy to check that
    if $U_0$ is the projection of $U$ to a normal measure on $\kappa$,
    and  $F$ is the set of upper parts associated with $\Fil(H)$, then
\[
     F = \{ h : \mbox{$h$ is a $U_0$-constraint and $j(h)(\kappa) \ge j(H)(j``\kappa^{+4})$} \}.
\] 

    By the diamond property, there is a stationary set of $i \in \kappa^{+4} \cap \cof(\kappa^{+2})$ such that
    $U_0 \cap V[G \restriction i] = W_i$ and $F \cap V[G \restriction i] = F_i$. 
    For each such $i$, we observe that for all $h$ 
\[
    h \in F_i \implies h \in F \implies j(h)(\kappa) \ge j(H)(j``\kappa^{+4}).
\]
    So $\{ j(h)(\kappa) : h \in F_i \}$ has a nonzero lower bound, and 
    by the Truth Lemma there is a condition in  $H_{\rm tail} * H_1 * H^*_i$  which extends $(r_i, a_i, q_i)$ and forces this.
    So when we chose    $q_{i+1}$,
 we arranged that $j(h_i)(\kappa)$ is the Boolean infimum of $\{ j(h)(\kappa) : h \in F_i \}$.
    Since  $j(H)(j``\kappa^{+4})$ is a lower bound for this set, 
   $j(h_i)(\kappa) \ge j(H)(j``\kappa^{+4})$, and hence $h_i \in  F$ as required.

\end{proof} 

\section{Universal graphs} \label{graphsection}

    We are now ready to prove the main result. By the results of Section \ref{mainsection}, we will
    assume that we have in $V[G]$ a measure $U_0$ on $\kappa$, a  filter $\mathcal F$ (with associated 
    set of upper parts $F$)  and a stationary set $S$ such that for every $i \in S$:
\begin{enumerate}
\item $U_0 \cap V[G \restriction i] = W_i$.
\item $F \cap V[G \restriction i] = F_i$. 
\item $h_i \in F$.
\end{enumerate} 

    We now let $\mathbb P$ be the forcing poset defined from $U_0$ and $\mathcal F$ as in Section \ref{pforcing},
    and force with $\mathbb P$ over $V[G]$, obtaining a generic sequence
\[
    x =  f_0, \kappa_1, f_1, \kappa_2, f_2 \ldots 
\]
  By the characterisation of genericity from Lemma \ref{genericitylemma}, we see that for every $i \in S$
   $x$ is ${\mathbb P}_i$-generic over $V[G \restriction i]$, where ${\mathbb P}_i$
   is the forcing defined in $V[G \restriction i]$ from $W_i$ and $F_i$. 

   We now define for each $i \in S$ a graph ${\mathcal U}_i \in V[G][x]$, which will embed every
   graph on $\kappa^+$ in $V[G \restriction i][x]$. We begin by using the criterion for genericity
   to choose some $j$ such that $\kappa_k \in \dom(h_i)$ and $h_i(\kappa_k) \in f_k$
   for all $k \ge j$. We set $\eta = \kappa_j$.


 The underlying set of the graph
   ${\mathcal U}_i$ is $T \times \kappa$, and the edges are defined as follows:

\medskip

\noindent   $(z, \delta)  {\mathcal U}_i (z', \delta')$ 
  if and only if there exist a lower part $t$ and 
    a condition $q \in {\mathbb Q}_i$ such that
\begin{enumerate}
\item  $q \in g_i$.
\item  $(t, B^q)$ is in the generic filter on ${\mathbb P}_i$ corresponding to the generic sequence $x$.
\item  $t$ is harmonious with $A^q$ past $\eta$. 
\item  There exist $\beta \in b^q$ and distinct $\zeta, \zeta' \in a^q$ such that:
\begin{enumerate}
\item  $f^q_{\eta, \beta}(\zeta) =  (z, \delta)$, $f^q_{\eta, \beta}(\zeta') =  (z', \delta')$,
 and $(t, B^q) \forces \zeta   {\dot {\mathcal G}}^i_\beta \zeta'$. 
  \end{enumerate} 
\end{enumerate}

   Since $\langle  {\dot {\mathcal G}}^i_\beta : \beta < \kappa^{+3} \rangle$ enumerates all ${\mathbb P}_i$-names
   for graphs on $\kappa^+$, it will suffice to verify that the generic function $f^i_{\eta, \beta}$ 
   is an embedding of ${\mathcal G}^i_\beta$ (the realisation of the name ${\dot {\mathcal G}}^i_\beta$) into the graph ${\mathcal U}_i$.
   One direction is easy:  
   if $f^i_{\eta, \beta}(\zeta)  {\mathcal U}_i f^i_{\eta, \beta}(\zeta')$
   then by definition there is $(t, B^q)$ in the generic filter on ${\mathbb P}_i$ induced by $x$ such that
   $(t, B^q) \forces \zeta   {\dot {\mathcal G}}^i_\beta \zeta'$,    
   and so by the Truth Lemma $\zeta  {\mathcal G}^i_\beta \zeta'$.

   For the converse direction, suppose that $\zeta  {\mathcal G}^i_\beta \zeta'$.
   We may find a condition $(s, B)$ in the generic filter induced by $x$ on ${\mathbb P}_i$,
   such that $(s, B) \forces \zeta   {\dot {\mathcal G}}^i_\beta \zeta'$.
   Let $s =  q_0, \kappa_1, q_1, \ldots, \kappa_n, q_n$. Extending the condition $(s, B)$ if need be, we may assume that
   $n \ge j$.  Since $(s, B)$ is in the generic filter induced by $x$, we have that
   $q_m \in f_m$ for $m \le n$, while $\kappa_m \in \dom(B)$ and $B(\kappa_m) \in f_m$ for $m > n$.  

   By the properties of the forcing poset ${\mathbb Q}_i$, we may find a condition
   $q$ in $g_i$ such that $\ssup(\dom(A^q)) > \kappa_n$, $\dom(B^q) \subseteq \dom(B)$,
   $B^q(\alpha) \le B(\alpha)$ for all $\alpha \in \dom(B)$, $\beta \in b^q$ and
   $\zeta, \zeta' \in a^q$.  

%
%

   Recall now that $\kappa_k \in \dom(h_i)$ and $h_i(\kappa_k) \in f_k$
   for all $k \ge j$, and also that $\eta = \kappa_j$ and $n \ge j$.
   Let $\bar n$ be the largest $k$ such that $\kappa_k < \ssup(\dom(A^q))$.

   Define a lower part $t$ as follows: 
\[
   t = q'_0, \kappa_1, q'_1, \ldots, \kappa_{\bar n}, q'_{\bar n}
\]
   where:
\begin{enumerate}
\item $q'_k = q_k$ for $k < j$.
\item $q'_k = q_k \cup A^q(\kappa_k)$ for $j \le k \le n$.
\item $q'_k = A^q(\kappa_k) \cup B(\kappa_k)$ for $n < k \le {\bar n}$.
\end{enumerate}

    We note that since $q \in g_i$ and $h_i$ is added by $g_i$,
    $A^q$ is an initial segment of $h_i$ and 
    $h_i(\alpha) \le B^q(\alpha)$ for all $\alpha \in \dom(h_i) \setminus \dom(A^q)$. 
   For $j \le k \le n$ we have that $q_k \in f_k$ and
    $A^q(\kappa_k) = h_i(\kappa_k) \in f_k$, so that 
    $q_k \cup A^q(\kappa_k)$ is a condition and lies in $f_k$.
    For $n < k \le {\bar n}$, again $A^q(\kappa_k) = h_i(\kappa_k) \in f_k$ and also
    $B(\kappa_k) \in f_k$, so that $A^q(\kappa_k) \cup B(\kappa_k)$ is a condition and lies  in $f_k$.

  We will verify that $t$ is harmonious with $A^q$ past $\eta$,
 $(t, B^q)$ extends $(s, B)$, and $(t, B^q)$ is in the filter generated by $x$. 
 This will suffice, since it will then be clear that $t$ and $q$ will serve as witnesses that
 $f^i_{\eta, \beta}(\zeta)  {\mathcal U}_i f^i_{\eta, \beta}(\zeta')$.

  The harmoniousness is immediate from the definitions. $(t, B^q)$ extends $(s, B)$ because
  $q'_k \le q_k$ for $k \le n$, $q'_k \le B(\kappa_k)$ for $n < k \le {\bar n}$,
  and $B^q \le B$. We already checked that $q'_k \in f_k$ for all $k \le {\bar n}$,
  so to finish we just need to see that $B^q(\kappa_k) \in f_k$ for all $k > {\bar n}$;
  this is immediate because $h_i(\kappa_k) \le B^q(\kappa_k)$ for all such $k$, and
  also $h_i(\kappa_k)\in f_k$ for all $k \ge j$.    


  To finish the construction of a small family of universal graphs, we will fix
  $i^* \in S$ which is a limit of points of $S$, and an increasing
  $\kappa^{++}$-sequence of points $i_\eta \in S$ which is cofinal in $i^*$.
  By routine chain condition arguments, every ${\mathbb P}_{i^*}$-name for 
  a graph on $\kappa^+$ may be viewed as a 
  ${\mathbb P}_{i_\eta}$-name for some $\eta < \kappa^{++}$. 
  We now consider the model $V[G \restriction i^*][x]$. The 
  family of graphs $\{  {\mathcal U}_{i_\eta} : \eta < \kappa^{++} \}$ is universal
  in this model, where $2^\kappa = 2^{\kappa^+} = \kappa^{+3}$ and
  of course $\kappa = \aleph_\omega$. 

 
 We have proved:

\begin{theorem} \label{mainthm} It is consistent from large cardinals that 
 $\aleph_\omega$ is strong limit, $2^{\aleph_\omega} = 2^{\aleph_{\omega+1}} =  \aleph_{\omega+3}$,
 and there is a family of size $\aleph_{\omega+2}$ of graphs on
 $\aleph_{\omega+1}$ which is jointly universal for all such graphs.
\end{theorem}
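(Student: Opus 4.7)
The plan is to assemble the machinery from Sections 2--6 into a single consistency proof following the architecture of the iteration outlined in Section \ref{mainsection}. My starting model is $V_0 \models \mathrm{GCH}$ with $\kappa$ supercompact; I apply a Laver preparation $\mathbb{L}$ to obtain indestructibility under $\kappa$-directed closed forcing, then add $\kappa^{+3}$ Cohen subsets of $\kappa^+$ via $\mathbb{A}$ to reach a model $V$ where $2^{\kappa^+} = \kappa^{+3}$. Over $V$ I build the $<\kappa$-support iteration $\mathbb{Q}^*$ of length $\kappa^{+4}$ whose iterands are either trivial or the anticipation forcing $\mathbb{Q}$ of Section \ref{qsection}, and take $G$ to be $\mathbb{Q}^*$-generic over $V$. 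By the final lemma of Section \ref{mainsection}, in $V[G]$ I obtain a measure $U_0$, a filter $\mathcal{F}$ of the form $\Fil(H)$ satisfying properties I--III, and a stationary set $S \subseteq \kappa^{+4} \cap \cof(\kappa^{++})$ of stages $i$ at which $W_i = U_0 \cap V[G \restriction i]$, $F_i = F \cap V[G \restriction i]$, and the generic function $h_i$ added by $\mathbb{Q}_i$ belongs to $F$.

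Next, I force over $V[G]$ with the Prikry-type poset $\mathbb{P}$ of Section \ref{pforcing} defined from $(U_0, \mathcal{F})$, producing the generic sequence $x = (f_0, \kappa_1, f_1, \kappa_2, \ldots)$. For each $i \in S$, the Genericity Lemma \ref{genericitylemma}, combined with the fact that $h_i \in F$ makes the eventual-domination clause automatic, ensures that $x$ is $\mathbb{P}_i$-generic over $V[G \restriction i]$, where $\mathbb{P}_i$ is the Prikry-type forcing defined inside $V[G \restriction i]$ from $W_i$ and $F_i$. I then define, for each $i \in S$, the graph $\mathcal{U}_i$ on $T \times \kappa$ whose edge relation reads off, from the generic filter $g_i \subseteq \mathbb{Q}_i$ and the generic filter on $\mathbb{P}_i$ induced by $x$, which pairs of labels are forced to be adjacent in the anticipated $\mathbb{P}_i$-names for graphs. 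The crucial verification is that for every $\beta < \kappa^{+3}$ and every $\mathbb{P}_i$-name $\dot{\mathcal{G}}^i_\beta$ the map $\zeta \mapsto f^i_{\eta,\beta}(\zeta)$ is an induced-subgraph embedding of the realisation $\mathcal{G}^i_\beta$ into $\mathcal{U}_i$; one direction is immediate from the Truth Lemma, while the other direction requires extending a given condition $(s,B) \in \mathbb{P}_i$ forcing the edge to a harmonious lower part $t$ whose associated $q \in g_i$ witnesses the edge via clause (\ref{crucialclause}) of the definition of $\mathbb{Q}$.

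Finally, I pick $i^* \in S$ which is a limit of $S$ of cofinality $\kappa^{++}$, fix an increasing cofinal sequence $\langle i_\eta : \eta < \kappa^{++}\rangle \subseteq S$ in $i^*$, and claim that $\{\mathcal{U}_{i_\eta} : \eta < \kappa^{++}\}$ is jointly universal in $V[G \restriction i^*][x]$. This uses a routine $\kappa^+$-cc argument on $\mathbb{P}_{i^*}$ inside $V[G \restriction i^*]$ to show that every $\mathbb{P}_{i^*}$-name for a graph on $\kappa^+$ is in fact a $\mathbb{P}_{i_\eta}$-name for some $\eta < \kappa^{++}$, so that the corresponding realisation is absorbed by $\mathcal{U}_{i_\eta}$. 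The cardinal arithmetic in the final model --- $\kappa = \aleph_\omega$, $2^{\aleph_\omega} = 2^{\aleph_{\omega+1}} = \aleph_{\omega+3}$ --- follows from Lemma \ref{bddsetlemma}, the strong $\kappa^+$-cc and $\kappa$-directed closure of $\mathbb{Q}^* \restriction i^*$, and standard chain-condition arguments for $\mathbb{P}$, which collapse only the open intervals between consecutive Prikry points.

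The main technical obstacle is the embedding direction sketched above: given $(s,B) \in \mathbb{P}_i$ forcing the edge, I must coordinate the extension of $(s,B)$ through Prikry points past $\ssup(\dom(A^q))$ so that the resulting lower part $t$ is simultaneously in the filter generated by $x$ (each new coordinate $q'_k$ lies in $f_k$) and harmonious with $A^q$ past the threshold $\eta = \kappa_j$. The resolution is to set $q'_k = q_k \cup A^q(\kappa_k)$ in the overlap range and $q'_k = A^q(\kappa_k) \cup B(\kappa_k)$ beyond it, using that $A^q(\kappa_k) = h_i(\kappa_k) \in f_k$ for all $k \ge j$; this is precisely where the third clause of the reflection lemma --- $h_i \in F$ --- earns its keep, and it is the single pressure point that forced the elaborate anticipation machinery of Section \ref{mainsection}.
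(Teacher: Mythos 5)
Your proposal follows the paper's own proof essentially step for step: the two-stage preparation ($\mathbb{L}$ then $\mathbb{A}$), the $<\kappa$-support iteration $\mathbb{Q}^*$ of length $\kappa^{+4}$, the reflection lemma producing $(U_0, \mathcal{F}, S)$ with $h_i \in F$, the use of the Genericity Lemma to obtain $\mathbb{P}_i$-genericity of $x$ and fix the threshold $\eta = \kappa_j$, the definition of $\mathcal{U}_i$ on $T \times \kappa$, the two-sided embedding verification including the precise construction of the harmonious lower part $t$ from $q$ and $B$, and the final cut at $i^* \in S$ with the $\kappa^{++}$-cofinal subsequence. This is the same decomposition and the same lemmas as the paper, so there is nothing to compare.
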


\section{Afterword} \label{after}

   There is some flexibility in the proof of Theorem \ref{mainthm}, in particular
   it would be straightforward to modify the construction so that in the final
   model $2^{\aleph_\omega} = \aleph_{\omega +k}$ for an arbitrary $k$ such that $3 \le k < \omega$.
   Larger values can probably be achieved but would require a substantial modification to the
   construction.  

   Theorem \ref{mainthm} leaves a number of natural questions open:
\begin{itemize}
\item Can we have a failure of SCH at $\aleph_\omega$ with $u_{\aleph_{\omega+1}} = 1$? 
\item On a related topic, what is the exact value of $u_{\aleph_{\omega+1}}$ in the model of Theorem
 \ref{mainthm}?
\item As far as the authors are aware, the only known results on the value of
   $u_{\kappa^+}$ for $\kappa$ singular strong limit and $2^\kappa > \kappa^+$ are consistency results
   of the kind proved in this paper. In particular, we lack a forcing technique to show that
   $u_{\kappa^+}$ can be arbitarily large.

  For $\kappa$ regular adding Cohen subsets to $\kappa$ makes $u_{\kappa^+}$
  arbitrarily large, is there an analogous result for $\kappa$ singular?
\item The class of graphs is a very simple class of structures. What can be done in
  more complex classes?
\item  In the model of Theorem \ref{mainthm}, GCH fails cofinally often below $\aleph_\omega$, and in
   fact $2^{\aleph_n} = \aleph_{n+4}$ for unboundedly many $n < \omega$. Is the conclusion consistent
   if we demand that GCH holds below $\aleph_\omega$?  
\end{itemize} 

   The authors' joint paper with Magidor and Shelah \cite{5authors} contains some related work,
   in which the final ``Prikry type'' forcing is a version of Radin forcing and we obtain
   models where $\mu$ is singular strong limit  of uncountable cofinality, SCH fails at $\mu$ and
   $u_{\mu^+} < 2^\mu$.

\bibliography{3authorsbib}

\begin{thebibliography}{10}

\bibitem{James}
James Cummings.
\newblock A model in which {GCH} holds at successors but fails at limits.
\newblock {\em Transactions of the American Mathematical Society},
  329(1):1--39, 1992.

\bibitem{5authors}
James Cummings, Mirna D{\v z}amonja, Menachem Magidor, Charles Morgan, and
  Saharon Shelah.
\newblock A framework for forcing constructions at successors of singular
  cardinals.
\newblock Submitted.

\bibitem{DzSh2}
Mirna D{\v z}amonja and Saharon Shelah.
\newblock Universal graphs at the successor of a singular cardinal.
\newblock {\em Journal of Symbolic Logic}, 68:366--387, 2003.

\bibitem{DzSh}
Mirna D{\v{z}}amonja and Saharon Shelah.
\newblock On the existence of universal models.
\newblock {\em Archive for Mathematical Logic}, 43(7):901--936, 2004.

\bibitem{FoWo}
Matthew Foreman and Hugh Woodin.
\newblock The generalized continuum hypothesis can fail everywhere.
\newblock {\em Annals of Mathematics}, 133(1):1--35, 1991.

\bibitem{KoSh}
Menachem Kojman and Saharon Shelah.
\newblock Nonexistence of universal orders in many cardinals.
\newblock {\em The Journal of Symbolic Logic}, 57(3):875--891, 1992.

\bibitem{Laver}
Richard Laver.
\newblock Making the supercompactness of $\kappa$ indestructible under
  $\kappa$-directed closed forcing.
\newblock {\em Israel Journal of Mathematics}, 29(4):385--388, 1978.

\bibitem{SCH1}
Menachem Magidor.
\newblock On the singular cardinals problem. {I}.
\newblock {\em Israel Journal of Mathematics}, 28:1--31, 1977.

\bibitem{Mathias}
Adrian Mathias.
\newblock Sequences generic in the sense of {P}rikry.
\newblock {\em Journal of the Australian Mathematical Society}, 15(4):409--414,
  1973.

\bibitem{Mekler}
Alan Mekler.
\newblock Universal structures in power $\aleph_1$.
\newblock {\em Journal of Symbolic Logic}, 55(2):466--477, 1990.

\bibitem{Mitchell}
William~J. Mitchell.
\newblock How weak is a closed unbounded ultrafilter?
\newblock In {\em Logic Colloquium '80 (Prague, 1980)}, volume 108 of {\em
  Studies in Logic and the Foundations of Mathematics}, pages 209--230.
  North-Holland, Amsterdam, 1982.

\bibitem{ShFA}
Saharon Shelah.
\newblock A weak generalization of {MA} to higher cardinals.
\newblock {\em Israel Journal of Mathematics}, 30(4):297--306, 1978.

\bibitem{Shgraphs}
Saharon Shelah.
\newblock On universal graphs without instances of {CH}.
\newblock {\em Annals of Pure and Applied Logic}, 26(1):75--87, 1984.

\bibitem{Shelahdiamonds}
Saharon Shelah.
\newblock Diamonds.
\newblock {\em Proceedings of the American Mathematical Society},
  138:2151--2161, 2010.

\end{thebibliography}
\bibliographystyle{plain}

\end{document}